\def\today{\ifcase\month\or
  January\or February\or March\or April\or May\or June\or
  July\or August\or September\or October\or November\or December\fi
  \space\number\day, \number\year}
 \newtheorem{theorem}{Theorem}
 \newtheorem{lemma}[theorem]{Lemma}
 \newtheorem{proposition}[theorem]{Proposition}
 \newtheorem{corollary}[theorem]{Corollary}
 \theoremstyle{definition}
 \theoremstyle{remark}
 \newcommand{\mc}{\mathcal}
 \newcommand{\R}{\mathbb{R}}
 \newcommand{\ds}{\text{\rm d}s}
 \newcommand{\dt}{\text{\rm d}t}
 \newcommand{\du}{\text{\rm d}u}
 \newcommand{\dv}{\text{\rm d}v}
 \newcommand{\dx}{\text{\rm d}x}
 \newcommand{\dom}{\text{\rm d}\omega}
 \newcommand{\dvs}{\text{\rm d} v_{*}}
\newcommand{\dsig}{\text{\rm d}\sigma_n^{\alpha}}
\newcommand{\dsip}{\text{\rm d}\sigma^{\Phi}_n}
\newcommand{\dxi}{\text{\rm d}\xi_n^{b}}
\newcommand{\dnua}{\text{\rm d}\nu_{\alpha}}
\newcommand{\dnull}{\text{\rm d}\nu_{\lambda}}
 \newcommand{\dmu}{\text{\rm d}\mu(R)}
 \newcommand{\dnu}{\text{\rm d}\nu(x)}
 \newcommand{\dnup}{\text{\rm d}\nu_\Phi(x)}
 \newcommand{\fs}{f^{\star}}
\begin{document}

\title[Inequalities for the Boltzmann operator]{Convolution inequalities for the Boltzmann collision operator}
\author[R. J. Alonso, E. Carneiro and I. M. Gamba]{Ricardo J. Alonso, Emanuel Carneiro and Irene M. Gamba}
\date{\today}
\subjclass[2000]{76P05, 47G10}
\keywords{Boltzmann equation; radial symmetry; Young's inequality; Hardy-Littlewood-Sobolev inequality, high energy tails.}
\address{Dept. of Computational \&
Applied Mathematics,
Rice University,
Houston, TX 77005-1892}
\email{Ricardo.J.Alonso@rice.edu}
\address{School of Mathematics, Institute for Advanced Study, Einstein Drive, Princeton, NJ 08540}
\email{ecarneiro@math.ias.edu}
\address{Department of Mathematics, University of Texas at Austin, Austin, TX 78712-1082.}
\email{gamba@math.utexas.edu}
\allowdisplaybreaks
\numberwithin{equation}{section}

\maketitle

\begin{abstract}
We study integrability properties of a general version of the Boltzmann collision operator for hard and soft potentials in $n$-dimensions. A reformulation of the collisional integrals allows us to write the weak form of the collision operator as a weighted convolution, where the weight is given by an operator invariant under rotations. Using a symmetrization technique in $L^p$ we prove a Young's inequality for hard potentials, which is sharp for Maxwell molecules in the $L^2$ case. Further, we find a new Hardy-Littlewood-Sobolev type of inequality for Boltzmann collision integrals with soft potentials.  The same method extends to radially symmetric, non-increasing potentials that lie in some $L^{s}_{weak}$ or $L^{s}$.  The method we use resembles a Brascamp, Lieb and Luttinger approach for multilinear weighted convolution inequalities and follows a weak formulation setting.  Consequently, it is closely connected to the classical analysis of Young and Hardy-Littlewood-Sobolev inequalities. In all cases, the inequality constants are explicitly given by formulas depending on integrability conditions of the angular cross section (in the spirit of Grad cut-off).  As an additional application of the technique we also obtain estimates with exponential weights for hard potentials in both conservative and dissipative interactions.
\end{abstract}

\section{Introduction}

\subsection{Background}
The nonlinear Boltzmann equation is a classical model
for a gas at low or moderate densities. The gas in a
spatial domain \(\Omega\subseteq\R^n\), \(n\ge 2\),
is modeled by the evolution of the mass density function \(f(x,v,t)\),
\((x,v)\in \Omega\times\R^n\), modeling the probability of  finding a particle at position $x$, with  velocity \(v\)  at the time \(t\in \R\).
The transport equation
for \(f\) reads
\begin{equation}
\label{eq:boltz}
(\partial_t + v\cdot \nabla_x) f = Q(f,f)\,,
\end{equation}
where  \(Q(f,f)\) is a quadratic integral operator,
expressing the change of \(f\) due to instantaneous
binary collisions of particles. The precise form
of \(Q(f,f)\) will be introduced below,  for both conservative (elastic)~\cite{CeIlPu} and dissipative (inelastic) interactions~\cite{BrPo}. The \(Q(f,f)\) operator factorizes as the difference of two positive operators, usually denoted by the \(Q^+(f,f)(x,v,t)\) rate of gain of probability due to two pre-collisional velocities for which one of  them will take the direction $v$ and  the \(Q^-(f,f)(x,v,t)\) rate of loss of  probability due to  particles that get knocked out of the direction $v$.

In addition, these operators depend on the form of their collision kernels which model the collision frequency depending on the intramolecular potentials between interacting particles. More specifically, these kernels depend on functions of the relative speed and on the scattering angle, the latter modeled by an angular function referred as the angular cross section. In all the cases we  assume that the angular cross section is modeled by an integrable angular function on the $S^{n-1}$ sphere (this condition, in the theory of the Boltzmann equation,  is called the Grad cut-off assumption). The collisional kernels are further divided into the following classes:
  hard potentials, corresponding to unbounded forms of the relative speed, modeling stronger collision rates, and soft potentials modeling weaker collision rates, both as the relative speed is larger; and Maxwell molecule type of interactions where collisional kernels are independent of the relative speed.

\subsection{Aim of the paper}

It is the purpose of this work to investigate the $L^r$-integrability (in velocity) of the gain operator as a bilinear form
\(Q^+(f,g)(v)\) acting on probability mass densities $f(v)$ and $g(v)$, and to search for exact representation
formulas for the inequality constants and possible optimal estimates depending on the $L^p$ and $L^q$ norms of
$f$ and $g$, respectively. This provides a new detailed study of the Boltzmann equation with  harmonic and
functional analysis tools.  It is possible due to the weighted convolution nature of its collisional integral operator in weak form.  The work focus on  questions on sharp constants for estimates in such functional spaces as well as on the existence and exact description of maximizers.

In order to achieve our goals, we introduce a representation of  collisional integrals in weak form that allows us to write the gain term of the collision operator as a weighted convolution,
with the weight being given by  a suitable bilinear operator invariant under rotations, acting on the test function.

Following the initial idea developed in \cite{AC}, the main ingredient to approach the convolution estimates is an $L^p$-radial symmetrization technique, a genuinely new addition to the Boltzmann theory, used here with much more generality. The authors showed the Young's inequality for the elastic hard potential case and calculated exact constants which were proved optimal for  Maxwellian molecule type models in $L^2$. The method was applied to the strong formulation of the gain operator in the Carleman representation.

 The new representation for the weak formulation of the gain collisional integral as a bilinear form $Q^+(f,g)$ as a  symmetric weighted convolution presented here allow us, very handily, to extend  their results to collisional
 forms with soft (i.e. singular) and symmetric decaying potentials, as well as to the
 case of dissipative interactions. We also observe that the corresponding loss bilinear form  $Q^-(f,g)$ does not have the same convolution symmetry property as the gain one, and so the results we obtain reflects this discrepancy between the gain and loss part of the Boltzmann collision operator.

 More specifically, we employ still  here the $L^p$-radial symmetrization technique, but in addition we
 develop a Brascamp, Lieb and Luttinger-type approach to multilinear
weighted convolution  inequalities associated to a weak formulation of the collisional forms \cite{BL, BLL}.
In addition, like in the connections between the works of Beckner \cite{Be}, Brascamp-Lieb \cite{BL, Li} and multilinear convex  inequalities such as the sharp Young's  and the sharp Hardy-Littlewood-Sobolev's, we also  obtain Young's inequality for  hard
potentials,  and the Hardy-Littlewood-Sobolev for soft potentials where the weighted convolution structure has a singular kernel. We also show a Young's inequality for
radially symmetric, non-increasing potentials, where the weighted convolution structure contains a singular kernel.
In our case, the   invariant-under-rotations weight multiplier in the convolutional structure of   the weak formulation of the collisional operator introduces a  non-linear change of coordinates due to the angular integration in the $(n-1)$-dimensional sphere. That means our inequalities and methods are not a direct application
of those in \cite{Be, BL, BLL, Li} but rather an analog result for dissipative Boltzmann operators of collisional type. The range of convexity exponents depends on the convolution symmetry property of the associated bilinear form, and recovers the Hardy-Littlewood-Sobolev convexity relation in the gain operator $Q^+$ case, while is restricted
for the loss operator $Q^-$.

All these estimates are valid for conservative or dissipative interactions between particles,
whose restitution coefficients have absolute continuity and monotonicity properties as functions
of the impact parameter (see  \eqref{beta} below).

Finally we obtain Young's inequalities with exponential  weights for  hard potentials, both for the elastic and
strictly (dissipative) inelastic case. It is remarkable that,
 in the dissipative $e(z)<1$ case,  tails  of order $\lambda$ and decay rate $a$ are preserved (called
 stretched exponential tails), with order $0\le\lambda\le 2$.
However, we show that in the elastic case these estimates hold for classical Maxwellian tails but they
have a polynomial weighted norm in one of the components.
These last estimates are of interest for the study, for example, of $L^1$,  $L^\infty$  and tail propagation
dissipative kinetic collisional models, such as granular flows.

In all cases, the inequalities constants are given by explicit formulas depending only on certain
integrability conditions of the angular cross section (in the spirit of Grad cut-off).

In short, we hope that this work, inspired from the harmonic analysis toolbox, provides simplified methods,  extensions to the full range of exponents including soft potentials and radially non-increasing potentials, and also extensions to dissipative interactions and weighted estimates, to achieve a better qualitative understanding of all the convolution-like inequalities governing the Boltzmann theory.

%%%%%%%%%%%%%%%%%%%%%%%%%%%%%%%%%%%%%%%%%%%%%Section 2%%%%%%%%%%%%%%%%%%%%%%%%%%%%%%%%%%%%%%%%%%%%%%%%%%
\section{Main Results}

\subsection{Preliminaries} In this paper we study the integrability properties  of the Boltzmann collision operator in the case of elastic or inelastic collisions.  The gain part of this bilinear collision operator, commonly denoted by $Q^{+}(f,g)$, is non-local in both components. It is defined via duality by
\begin{equation}\label{e1}
\int_{\R^{n}}Q^{+}(f,g)(v)\psi(v)\,\dv:=\int_{\R^{n}}\int_{\R^{n}}f(v)g(v_{*})\int_{S^{n-1}}\psi(v')B(|u|,\hat{u}\cdot\omega)\,\dom\,\dvs\,\dv,
\end{equation}
where the functions $f, g, \psi \in C_0(\R^n)$ (continuous with compact support). The symbol $\hat{u}$ represents the unitary vector in the direction of $u$ ($\hat{u}=u/|u|$) and $\dom$ is the surface measure on the sphere $S^{n-1}$. The variables $v, v_*$ (pre-collision velocities), $v', v_{*}'$ (post-collision velocities) and $u$ (relative velocity) are related by
\begin{equation}\label{eq1'}
u=v-v_{*}\ \ , \ \ v'=v-\frac{\beta}{2}(u-|u|\omega) \ \ \mbox{and} \ \ v + v_{*} = v' + v_{*}'
\end{equation}
The corresponding loss part of Boltzmann collision operator, is only non-local in one of its its components, say $g$. It is defined in a strong form by
\begin{equation}\label{defQ-}
Q^{-}(f,g)(v):=\left\|b\right\|_{L^{1}(S^{n-1})}f(v)\int_{\mathbb{R}^{n}}g(v_*)\Phi(u)\dvs.
\end{equation}
We notice that, wile $Q^+$ is a bilinear operator with non-locality in both its components, $Q^-$ is non-local only in its second component $g$ and maintains locality in its first component $f$.
This difference in the nature of there non-locality plays a crucial role in the range of exponents for the 
Hardy-Littlewood-Sobolev inequalities for the $Q^-$ with singular potentials (Corollaries \ref{C9} and \ref{C10}).   
The inelastic properties of the collision operator are encoded in the positive scalar function $\beta:[0,\infty)\rightarrow[\tfrac{1}{2},1]$ defined by $\beta(z):=\tfrac{1+e(z)}{2}$, where parameter $e$ is the so-called \textit{restitution coefficient} which enjoys the following two properties that assure micro-reversibility of the interactions:
\begin{itemize}
\item [(i)] $z\mapsto e(z)$ is absolutely continuous and  non-increasing.
\item [(ii)] $z\mapsto z e(z)$ is non-decreasing.
\end{itemize}
The dependence of the restitution coefficient on the physical variables is commonly given by $z=|u|\ \sqrt{\tfrac{1-\hat{u}\cdot\omega}{2}}$, i.e. the restitution coefficient $e$, and thus $\beta$, depends only on the impact velocity
\begin{equation}\label{beta}
\beta\left(|u|\ \sqrt{\tfrac{1-\hat{u}\cdot\omega}{2}}\right)=\frac{1+e\left(|u|\ \sqrt{\tfrac{1-\hat{u}\cdot\omega}{2}}\right)}{2}\,.
\end{equation}
We point out that the model for $e$ could be more complex (for example assuming dependence on macroscopic variables like temperature), however this will not be the case in this paper.\\
The particle interaction is elastic when the parameter  $\beta=1$, and is referred as {\sl sticky} particles when $\beta=1/2$.
A complete discussion of the physical aspects of the restitution coefficient can be found in \cite{BrPo}. Standard models for the restitution coefficient, for example constant restitution coefficient
and viscoelastic hard spheres, satisfy the assumptions (i) and (ii) above. We refer the interested reader to \cite{Al}, \cite{BePu}, \cite{BGP}, \cite{GT} and \cite{MMR} for additional numerical and mathematical references that use this class of
models.

The nature of the interactions modeled by $Q^{+}$ is encoded in the kernel $B(|u|,\hat{u}\cdot\omega)$ modeled by strength of intramolecular potentials, and many physical models accept the representation (henceforth assumed)
\begin{equation*}
B(|u|,\hat{u}\cdot\omega)=|u|^{\lambda}b(\hat{u}\cdot\omega)\ \ \mbox{with}\ \ -n<\lambda.
\end{equation*}
Depending on the parameter $\lambda$ the interaction receives different names: soft-potentials when $-n<\lambda<0$, meaning that larger relative velocity corresponds to a weaker collision frequency; Maxwell molecules type of interactions when $\lambda=0$, of collision frequency independent of the relative velocity;  hard potentials when $\lambda>0$, meaning that larger relative velocity corresponds to stronger collision frequency. For the (nonnegative) angular kernel $b(\hat{u}\cdot\omega)$ we will require the Grad cut-off assumption
\begin{equation*}
\int_{S^{n-1}}b(\hat{u}\cdot\omega)\dom<\infty.
\end{equation*}
We refer to \cite{BGP} and \cite{GPV} for a detailed discussion on the inelastic collision operator.

\subsection{Description of the results} Very recently,   Alonso and Carneiro \cite{AC} revisited the $L^p$-analysis of the operator $Q^{+}$ in the elastic case (restitution coefficient $e \equiv 1$) for the case of Maxwell type of interactions and  hard potentials (i.e. $0\le\lambda\le1$) and developed, by means of radial symmetrization, a Young's inequality approach that produces sharp constants.  Their  approach  involved the analysis of the collisional integral on the gain operator in strong form by means of the Carleman integral representation.

  For our current goals, we now use a weak formulation of the collisional integral written as  weighted convolution, where the weight is a nonlinear invariant-under-rotations operator acting on the test function (see \eqref{e3} below).

From now on we work in the  setting of dissipative interactions satisfying conditions (i), (ii) and \eqref{beta} as described above.

Let $\psi$ and $\phi$ be bounded and continuous functions. Define the bilinear operator
\begin{equation}\label{P}
\mathcal{P}(\psi,\phi)(u):=\int_{S^{n-1}}\psi(u^{-})\phi(u^{+})b(\hat{u}\cdot\omega)\,\dom\,,
\end{equation}
where the symbols $u^{+}$ and $u^{-}$, commonly known as Bobylev's variables, are defined by
\begin{equation}\label{variables u}
u^{-}:=\tfrac{\beta}{2}(u-|u|\omega)\ \ \mbox{and}\ \ u^{+}:=u-u^{-}=(1-\beta)u+\tfrac{\beta}{2}(u+|u|\omega).
\end{equation}
We shall see in Section 3 that, under conditions \eqref{beta} and \eqref{variables u}, the operator (\ref{P}) has a certain invariance under the group of rotations in  $\R^n$. This operator was first  introduced by Bobylev  \cite{Bo1,Bo2} in a slightly different setting where it was shown that
\begin{equation}\label{FTQ}
\widehat{Q^{+}(f,g)}=\mathcal{P}(\hat{f},\hat{g})\, ,
\end{equation}
in the elastic Maxwell molecules case (i.e. $\lambda=0$ and $\beta\equiv 1$).
More recently, when introducing the study of the Boltzmann dissipative model for Maxwell type of interactions in \cite{BCaG00}, the authors showed that such relation  also  holds  for constant $\beta\neq1$.
For references on the use of the Fourier transform in the analysis of the elastic Boltzmann collision operator one can consult \cite{D} and \cite{KR}. In the dissipative interaction case we also refer  to
 \cite{GT} for the Fourier representation of the $Q^+$ operator.

In particular, from (\ref{e1}) and (\ref{P}) we obtain the following relation between the operators $Q^{+}$ and $\mathcal{P}$ appearing in the weak formulation of the gain operator, now written in velocity and relative velocity coordinates
\begin{align}\label{e3}
\begin{split}
\int_{\mathbb{R}^{n}}Q^{+}(f,g)(v)&\psi(v)\,\dv =\int_{\mathbb{R}^{n}}\int_{\mathbb{R}^{n}}f(v)g(v-u)\mathcal{P}(\tau_{v}\mathcal{R}\psi,1)(u)\ |u|^{\lambda}\,\du\,\dv\\
&= \int_{\mathbb{R}^{n}}\int_{\mathbb{R}^{n}}f(u+v)g(v)\mathcal{P}(1,\tau_{-v}\psi)(u)\, |u|^{\lambda}\,\du\,\dv,
\end{split}
\end{align}
where $\tau$ and $\mathcal{R}$ are the translation and reflection operators
\begin{equation*}
\tau_{v}\psi(x):=\psi(x-v)\ \ \mbox{and}\ \ \mathcal{R}\psi(x):=\psi(-x).
\end{equation*}
Representation (\ref{e3}) exhibits the nature of the weighted convolution structure of the weak formulation of the gain operator $Q^+$ and  also shows that the integrability properties of the collision operator $Q^{+}$ are closely related to those of the bilinear operator $\mathcal{P}$. A similar approach was carried out in \cite{GPV2} which relates the operator $Q^{+}$ to a slightly different angular averaging operator.

One of the advantages of working with the bilinear operator $\mc{P}$  is that a simple change of variables in (\ref{e3}) allow us to change the roles of $f$ and $g$ without essentially changing the convolution structure of the integral. This convolution symmetry will be specially important later on in the proof of the Hardy-Littlewood-Sobolev (HLS) inequality for $Q^{+}$ in the full range of exponents (Theorem \ref{HLS}). Such convolution symmetry is not present in the loss operator $Q^{-}$ as we shall see in Corollaries \ref{C9} and \ref{C10}, that the corresponding weak formulation for $Q^{-}(f,g)$ has a convolution structure only in its second component $g$, but not in its first one $f$. As a consequence, the HLS inequality for this operator will only be valid in a restricted range (Section 6.2).

\smallskip

In Section 3 we develop the $L^{p}$-analysis of the operator $\mathcal{P}$ by means of the weak  representation (\ref{e3})  using a radial symmetrization method introduced in \cite{AC} for a different representation of the gain operator in strong form by means of the Carleman representation.   This approach using (\ref{e3}), provides exact constants in some of our inequalities, both in the hard and soft potentials settings, that are sharp in some cases. In general, they are calculated depending on (explicit) integral conditions on the angular cross section.

The results of Section 3 are crucial for all results proved in the following three sections.

In Section 4 we revisit Young's inequality for hard potentials, where the new additions are the calculation of exact constants for conservative or dissipative interactions, which are proved to be sharp in the constant energy dissipation rate for Maxwell type of interactions and selected $L^p$-exponents.

To this end, consider the weighted Lebesgue spaces $L^p_{k}(\R^n)$ ($p\geq 1$, $k\geq0$) defined by the norm
\begin{equation*}
\|f\|_{L^p_k(\R^n)} = \left( \int_{\R^n} |f(v)|^p \,\left(1+ |v|^{pk}\right) \dv\right)^{1/p}.
\end{equation*}
We prove the following.
\begin{theorem}\label{Young}
Let $1 \leq p,\,q,\,r \leq \infty$ with $1/p + 1/q = 1 + 1/r$. Assume that $$B(|u|,\hat{u}\cdot\omega)=|u|^{\lambda}b(\hat{u}\cdot\omega)\,,$$
with $\lambda \geq 0$. For $\alpha \geq 0$, the bilinear operator $Q^{+}$ extends to a bounded operator\footnote{In this paper we shall prove all the inequalities for a dense subspace of smooth functions giving the desired extensions for $L^p$, $1 \leq p < \infty$. The $L^{\infty}$ bounds, when applicable, are easily treated directly just pulling out the $L^{\infty}$-norm out of the integrals.} from $L^p_{\alpha + \lambda}(\R^n) \times L^q_{\alpha + \lambda}(\R^n) \to L^r_{\alpha}(\R^n)$ via the estimate
\begin{equation}\label{YoungIn}
\left\| Q^{+}(f,g)\right\|_{L^r_{\alpha}(\R^n)} \leq C \,\|f\|_{L^p_{\alpha + \lambda}(\R^n)} \,
\|g\|_{L^q_{\alpha +\lambda}(\R^n)}.
\end{equation}
The constant $C$ is given by
\begin{align*}
\begin{split}
C=2^{\lambda + 2+ \tfrac{\alpha}{2}+ \tfrac{1}{r}}\left|S^{n-2}\right|&\left(2^{\tfrac{n}{r'}}\int^{1}_{-1}\left(\tfrac{1-s}{2}\right)^{-\tfrac{n}{2r'}}\dxi(s)
\right)^{\tfrac{r'}{q'}}\\
&\left(\int^{1}_{-1}\left[\left(\tfrac{1+s}{2}\right)+(1-\beta_{0})^{2}\left(\tfrac{1-s}{2}\right)\right]^{-\tfrac{n}{2r'}}\dxi(s)\right)^{\tfrac{r'}{p'}},
\end{split}
\end{align*}
where the measure $\xi_{n}^{b}$ on $[-1,1]$ is defined as
$\dxi(s)=b(s)(1-s^{2})^{\tfrac{n-3}{2}}\,\ds$ and $\beta_0=\beta(0)$. In the case $(p,q,r) = (1,1,1)$ the constant $C$ is to be understood as
\begin{equation*}
 C = 2^{\lambda + 3 + \tfrac{\alpha}{2}}\left|S^{n-2}\right|\int^{1}_{-1}\dxi(s).
\end{equation*}

\end{theorem}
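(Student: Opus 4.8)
The plan is to prove \eqref{YoungIn} for $f,g$ in a dense subspace of $C_0^\infty(\R^n)$ --- the extension to $L^p$, $1\le p<\infty$, and the $L^\infty$ endpoints being handled as in the footnote --- and we may take $f,g\ge0$. Writing $\mu(v):=(1+|v|^{r\alpha})^{1/r}$, so that $\|h\|_{L^r_\alpha}=\|h\mu\|_{L^r}$, $L^r$--$L^{r'}$ duality reduces the claim to a uniform bound on $\int_{\R^n}Q^+(f,g)(v)\psi(v)\,\dv$ over test functions $\psi\in C_0(\R^n)$ with $\|\psi\mu^{-1}\|_{L^{r'}}\le1$. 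For fixed $\omega$ the substitution $(v,v_*)\mapsto(v',u)$, where $u=v-v_*$ and, by \eqref{eq1'}--\eqref{variables u}, $v=v'+u^-$ and $v_*=v'-u^+$, has unit Jacobian; it turns the quantity above into the weighted convolution
\[
\int_{\R^n}\!\int_{\R^n}\!\int_{S^{n-1}}f(v'+u^-)\,g(v'-u^+)\,\psi(v')\,b(\hat u\cdot\omega)\,|u|^\lambda\,\dom\,\du\,\dv',
\]
which displays both Bobylev arms; equivalently one proceeds from \eqref{e3}. The analysis then rests on the $L^p$-theory of $\mathcal P$ developed in Section~3.

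The first step moves the weights carried by $\psi$ and by $|u|^\lambda$ onto $f$ and $g$. On the support of the collisional integral the dissipative energy inequality $|v'|^2+|v_*'|^2\le|v|^2+|v_*|^2$ holds for every $\omega$ because $\beta\le1$ (it is an identity when $\beta\equiv1$), so $|v'|\le(|v|^2+|v_*|^2)^{1/2}$ and an elementary estimate gives $\mu(v')\le2^{\alpha/2}\mu(v)\mu(v_*)$. Writing $\psi=\mu\,\phi$ with $\|\phi\|_{L^{r'}}\le1$ and $|u|^\lambda=|v-v_*|^\lambda\le2^\lambda(1+|v|^\lambda)(1+|v_*|^\lambda)$, the displayed integral is dominated by a power of $2$ times the same integral with $f,g,\psi$ replaced by $f\,\mu(1+|\cdot|^\lambda)$, $g\,\mu(1+|\cdot|^\lambda)$, $|\phi|$; since $f\,\mu(1+|\cdot|^\lambda)$ is controlled by $\|f\|_{L^p_{\alpha+\lambda}}$ up to a dimensional constant (and likewise for $g$), matters reduce to the unweighted case $\alpha=\lambda=0$, with the elementary constants assembling to $2^{\lambda+2+\alpha/2+1/r}$.

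For $\alpha=\lambda=0$ one invokes the $L^p$-radial symmetrization of Section~3, in the spirit of the Brascamp--Lieb--Luttinger rearrangement inequalities \cite{BL,BLL}: since $\mathcal P$ is assembled from the rotation-invariant kernel $b(\hat u\cdot\omega)$ (cf. the invariance of $\mathcal P$ under rotations proved in Section~3), replacing $f,g,\phi$ by their symmetric non-increasing rearrangements does not decrease the integral, and it suffices to treat radial inputs. For radial inputs one disintegrates $\dom$ over $s=\hat u\cdot\omega\in[-1,1]$, so that $\dom=|S^{n-2}|(1-s^2)^{(n-3)/2}\,\ds$ yields the measure $\dxi$, and carries out the Bobylev substitutions $u\mapsto u^-$ and $u\mapsto u^+$, whose radial scalings are $|u^-|=\beta\,|u|\,(\tfrac{1-s}{2})^{1/2}$ and $|u^+|=|u|\,[\tfrac{1+s}{2}+(1-\beta)^2\tfrac{1-s}{2}]^{1/2}$. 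Here assumption (ii) --- equivalently, $|u^-|=z\beta(z)$ non-decreasing with $z=|u|(\tfrac{1-s}{2})^{1/2}$ --- makes both maps monotone in $|u|$, so their $n$-dimensional Jacobians are controlled, while assumption (i), i.e. $\beta\le\beta_0=\beta(0)$, gives the pointwise bound $|u^+|\ge|u|\,[\tfrac{1+s}{2}+(1-\beta_0)^2\tfrac{1-s}{2}]^{1/2}$, which is what forces $\beta_0$ (rather than a pointwise $\beta$) into the final constant. Feeding these scalings into the symmetric three-exponent Young inequality --- H\"older with $1/p'+1/q'+1/r=1$, so that the $f$--$\psi$ arm contributes the factor built from the $u^-$ scaling, the $g$--$\psi$ arm the one built from the $u^+$ scaling, and the $f$--$g$ arm the mass $\|b\|_{L^1(S^{n-1})}$ --- produces, after collecting powers, exactly $\int_{-1}^1(\tfrac{1-s}{2})^{-n/(2r')}\dxi(s)$ and $\int_{-1}^1[\tfrac{1+s}{2}+(1-\beta_0)^2\tfrac{1-s}{2}]^{-n/(2r')}\dxi(s)$ raised to the powers $r'/q'$ and $r'/p'$ (together with the normalizing $2^{n/r'}$ coming from the change of variables); combined with the reduction above this gives $C$.

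The endpoint $(p,q,r)=(1,1,1)$ is treated separately since the exponents degenerate: there $Q^+(f,g)\ge0$ and $\|Q^+(f,g)\|_{L^1_\alpha}=\int_{\R^n}Q^+(f,g)(v)(1+|v|^\alpha)\,\dv$, which from \eqref{e1}, the same weight transfer, and $\int_{S^{n-1}}b(\hat u\cdot\omega)\,\dom=|S^{n-2}|\int_{-1}^1\dxi(s)$ gives the stated $C=2^{\lambda+3+\alpha/2}|S^{n-2}|\int_{-1}^1\dxi(s)$. The step I expect to be the real obstacle is the radial symmetrization itself: unlike in \cite{BLL} the relevant kernel is not a fixed Riesz-type kernel, but carries the nonlinear, $\omega$-dependent Bobylev maps $u\mapsto u^\pm$ (already nonlinear through the term $|u|\omega$, even for constant $\beta$) together with the variable coefficient $\beta(z)$, so establishing that the rearrangement inequality still holds --- and that it yields precisely the constant above --- is exactly where assumptions (i), (ii) and \eqref{beta} are used essentially, and is the content of Section~3; the rest is bookkeeping of the elementary powers of $2$.
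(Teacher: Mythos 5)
Your overall architecture (the weak form \eqref{e3}, weight transfer via energy dissipation, a three-factor H\"older split with $1/p'+1/q'+1/r=1$, and Jacobian lower bounds exploiting (i)--(ii) and $\beta_0$) matches the paper's, but the step you yourself flag as the obstacle is genuinely broken as you state it. You propose to symmetrize the full trilinear form $\int\!\!\int\!\!\int_{S^{n-1}} f(v'+u^-)\,g(v'-u^+)\,\psi(v')\,b\,|u|^\lambda$ \emph{first} (``replacing $f,g,\phi$ by their symmetric non-increasing rearrangements does not decrease the integral \dots it suffices to treat radial inputs''), attributing this to Section~3. Section~3 contains no such statement: Lemma \ref{sym lemma} is not a Brascamp--Lieb--Luttinger rearrangement inequality --- the symmetrization $f^\star_p$ there is the $L^p$-average over rotations, not the level-set rearrangement --- and, crucially, it holds only under $1/p+1/q+1/r=1$, because its proof is H\"older on the probability space $SO(n)$. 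For the Young trilinear form the natural exponents are $(p,q,r')$ with $1/p+1/q+1/r'=2$, and the group-H\"older step then fails outright (on a probability space, $\int F_1F_2F_3\,\text{\rm d}\mu\le\prod_i\|F_i\|_{p_i}$ is false when the reciprocals sum to $2$: take $F_i=\mathbf{1}_A$, $p_i=3/2$, $\mu(A)=\epsilon$ small). So ``symmetrize first, then split'' is not available from the paper's machinery, and read as a genuine symmetric-decreasing-rearrangement claim for the nonlinear Bobylev maps $u\mapsto u^{\pm}$ it is unproven (and, as the paper shows, unnecessary).

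The paper runs the steps in the opposite order: it performs the three-factor H\"older split first, in the original variables (\eqref{Holder1}--\eqref{add0}), so the test function only ever enters through $\|\mathcal{P}(\tau_{v}\mathcal{R}\psi,1)\|_{L^{r'}(\du)}$ and $\|\mathcal{P}(1,\tau_{-v}\psi)\|_{L^{r'}(\du)}$ for each fixed $v$; these are controlled by Theorem \ref{T3} with exponents $(r',\infty,r')$, whose dual formulation has reciprocals summing to $1$, exactly where Lemma \ref{sym lemma} applies, and the translations are harmless because only the translation-invariant norm $\|\psi\|_{L^{r'}}$ survives. Note also that even if you could assume radial inputs, your ``disintegrate and substitute'' step would still face either the non-radial function $\tau_{v}\mathcal{R}\psi$ or an $n$-dimensional Jacobian bound for the nonlinear maps $u\mapsto u^{\pm}$; the paper needs only the one-dimensional radial scalings precisely because the symmetrization is performed inside Theorem \ref{T3} (via Minkowski and the monotone changes of variables $y=a_1(x,s)$, $y=a_2(x,s)$). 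By comparison, the remaining discrepancies --- placing an extra $\|b\|_{L^1}$ in the $f$--$g$ arm, and asserting without verification that your multiplicative weight bounds assemble to exactly $2^{\lambda+2+\alpha/2+1/r}$ instead of the paper's additive four-term splitting \eqref{add3} --- are minor bookkeeping issues, but the symmetrization step must be restructured as above for the proof to go through.
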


In Section 5 we prove a Hardy-Littlewood-Sobolev-type inequality for the collision operator in the case of soft potentials, as follows:
\begin{theorem}\label{HLS}
Let $1< p,\ q,\ r< \infty$ with $-n<\lambda<0$ and $1/p+1/q=1+\lambda/n+1/r$.  For the kernel
$$B(|u|,\hat{u}\cdot\omega)=|u|^{\lambda}\ b(\hat{u}\cdot\omega),$$
the bilinear operator $Q^{+}$ extends to a bounded operator from $L^p(\mathbb{R}^{n}) \times L^q(\mathbb{R}^{n}) \to L^r(\mathbb{R}^{n})$ via the estimate
\begin{equation}\label{e18}
\left\| Q^{+}(f,g)\right\|_{L^r(\mathbb{R}^{n})} \leq C \,\|f\|_{L^{p}(\mathbb{R}^{n})} \, \|g\|_{L^q(\mathbb{R}^{n})}.
\end{equation}
The constant  $C$ is explicit in {\rm(\ref{boundb1*})}, {\rm(\ref{boundb2*})} and {\rm(\ref{boundb3})}.
\end{theorem}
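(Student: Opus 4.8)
\medskip

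\noindent\emph{Proof plan.} The plan is to run the argument in three stages: reduce the statement to a trilinear inequality by duality, use the symmetrization machinery of Section~3 to replace $f,g,\psi$ by their symmetric decreasing rearrangements and collapse the angular average defining $\mathcal{P}$ into an explicit radial convolution kernel, and finally recognize the resulting model inequality as a weighted multilinear Hardy--Littlewood--Sobolev estimate that can be closed by iterating the classical HLS and Young inequalities. For the first stage: by density it suffices to prove \eqref{e18} for nonnegative $f,g\in C_0(\R^n)$ tested against nonnegative $\psi\in C_0(\R^n)$, so, substituting $v_*=v-u$ and $v'=v-u^{-}$ in \eqref{e1} via \eqref{variables u}, the claim becomes
\[
\Lambda(f,g,\psi):=\int_{\R^n}\int_{\R^n}\int_{S^{n-1}}f(v)\,g(v-u)\,\psi(v-u^{-})\,b(\hat u\cdot\omega)\,|u|^{\lambda}\,\dom\,\du\,\dv\ \le\ C\,\|f\|_{L^p}\|g\|_{L^q}\|\psi\|_{L^{r'}},
\]
with $r'$ the conjugate exponent. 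Using instead the second representation in \eqref{e3}, i.e. the change of variables interchanging the roles of $f$ and $g$, the same quantity is also controlled by a form convolving against $g$; keeping both shapes available is exactly what will allow the full range $1<p,q,r<\infty$ to be reached rather than a restricted one.

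For the second stage, by the rotational invariance of $\mathcal{P}$ and the $L^p$-radial symmetrization argument of Section~3 --- a Brascamp--Lieb--Luttinger type rearrangement applied to the weak form \eqref{e3} --- the trilinear form does not decrease when $f,g,\psi$ are replaced by $f^{\star},g^{\star},\psi^{\star}$, up to the explicit structural constants carried by the angular measure $\xi^{b}_{n}$. For radial arguments the angular integral defining $\mathcal{P}$ reduces to the explicit one-dimensional operator obtained in Section~3 by passing to the variable $s=\hat u\cdot\omega$ and then to $t=\big(\tfrac{1+s}{2}\big)^{1/2}$ (and the analogous variable built from $u^{-}$), which exhibits $\mathcal{P}(1,\psi^{\star})(u)$ and $\mathcal{P}(\tau_v\mathcal{R}\psi^{\star},1)(u)$ as homogeneous, radially monotone averages of $\psi^{\star}$ along the segment joining the origin to the relevant endpoint. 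Bounding these averages from above by the value of $\psi^{\star}$ at a comparable point --- using that $\psi^{\star}$ is radially nonincreasing and that, by (i)--(ii) and \eqref{beta}, $\beta$ is controlled away from $0$ and from $1$ --- one arrives at a model inequality of the form
\[
\Lambda(f,g,\psi)\ \le\ C\int_{\R^n}\int_{\R^n}\int_{\R^n} f^{\star}(x)\,g^{\star}(y)\,|x-y|^{\lambda}\,\psi^{\star}(z)\,\kappa(x,y,z)\,\dx\,\dy\,\dz,
\]
with $\kappa$ homogeneous of the appropriate negative degree so that the total kernel has degree $\lambda-n$, consistent with the scaling relation $1/p+1/q=1+\lambda/n+1/r$.

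For the third stage, the model inequality is a weighted trilinear convolution estimate of Hardy--Littlewood--Sobolev type. Freezing one of the three functions and viewing the form as a bilinear convolution in the remaining two, it is closed by the classical HLS inequality in $\R^n$ followed by H\"older/Young; since the relative sizes of $p,q,r$ decide which pairing is the efficient one, the argument splits into three regimes, and in each the composed inequality produces one of the constants \eqref{boundb1*}, \eqref{boundb2*}, \eqref{boundb3}. Multiplying the structural factors from the previous stage (powers of $2$, $|S^{n-2}|$, and moments of $\xi^{b}_{n}$) by the HLS/Young constants from this stage yields the explicit $C$ in each case.

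The delicate point is the reduction stage. Unlike the linear substitutions in the classical Brascamp--Lieb--Luttinger setup, the Bobylev variables $u^{\pm}$ depend on $u$ through its direction as well as its length, so the angular integration in $\mathcal{P}$ is a genuinely nonlinear change of coordinates on $S^{n-1}$, and $\beta$ itself varies with $|u|$ and $\hat u\cdot\omega$. Making the rearrangement legitimate in this situation --- reducing to radial data, then dominating the angular average by an honest radially decreasing convolution kernel, all the while keeping the constants explicit --- is the heart of the matter; once that is in place, the closing stage is the standard, if bookkeeping-heavy, iterated HLS argument that produces the three announced cases.
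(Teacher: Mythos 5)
Your overall architecture (duality, Section~3 machinery, closure by classical HLS, three exponent regimes) points in the right direction, but the middle stage as written has a genuine gap, and it is precisely the step you yourself flag as ``the heart of the matter.'' First, the symmetrization of Section~3 is the $L^p$-average over rotations, $f^{\star}_p(x)=\bigl(\int_G |f(Rx)|^p\,\dmu\bigr)^{1/p}$, not the symmetric decreasing rearrangement: its output is radial but in general \emph{not} radially nonincreasing, so your key move --- dominating the angular averages $\mathcal{P}(\tau_v\mathcal{R}\psi^{\star},1)(u)$ by the value of $\psi^{\star}$ at a ``comparable point'' using radial monotonicity --- has no support. Second, if you instead intend the classical symmetric decreasing rearrangement, then the asserted inequality ``$\Lambda$ does not decrease under $f,g,\psi\mapsto f^{\star},g^{\star},\psi^{\star}$'' is a Brascamp--Lieb--Luttinger-type statement for a trilinear form whose arguments are composed with the \emph{nonlinear} Bobylev substitution $u\mapsto u^{-}$ (with $\beta$ depending on the impact velocity); the paper explicitly points out that its results are not a direct application of \cite{BLL,BL}, and no such rearrangement inequality is proved anywhere --- Lemma \ref{sym lemma} only symmetrizes inside the $u$-integral defining $\mathcal{P}$, with $\tau_v$ frozen. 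Consequently your ``model inequality'' with a kernel $\kappa(x,y,z)$ of total homogeneity $\lambda-n$ is never actually derived, and the whole reduction to a weighted trilinear HLS form remains an assertion.

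The paper's proof avoids any pointwise kernel domination. In the regime $r<q$ it applies H\"older in $u$ with respect to the singular measure $|u|^{\lambda}\du$, then invokes Theorem \ref{T3} with weight $\alpha=\lambda$ and exponents $(a,\infty,a)$ to bound $\|\mathcal{P}(\tau_v\mathcal{R}\psi,1)\|_{L^{a}(\dnull)}$ by $C_1\|\psi\|_{L^{a}(\dnull)}$; this converts the inner integral into two ordinary convolutions $|\psi|^{a}\ast|u|^{\lambda}$ and $|g|^{a'}\ast|u|^{\lambda}$, which are handled by the classical HLS inequality after a H\"older step in $v$ and a careful solution of the exponent system \eqref{system-a}, giving \eqref{boundb1*}. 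The regime $r<p$ is obtained from the second representation in \eqref{e3} (the convolution symmetry you correctly identify), giving \eqref{boundb2*}. Finally, the regime $r\ge\max\{p,q\}$ is \emph{not} closed by a direct pairing as your plan suggests, but by bilinear Riesz--Thorin interpolation between the two previous cases, which is where \eqref{boundb3} comes from; it is not clear that a direct HLS/Young pairing works there, which is presumably why the paper interpolates. To repair your argument you would either have to prove the rearrangement and kernel-domination claims of your stage two (a substantial new result), or replace that stage by the operator bound of Theorem \ref{T3} in the weighted space $L^{a}(\dnull)$ as the paper does.
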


The constants we obtain for the two inequalities above are explicit, but generally not sharp. Only in the cases $\alpha = \lambda = 0$, $(p,q,r) = (2,1,2)$ and $(p,q,r) = (1,2,2)$ we find the sharp constant for the Young's inequality (\ref{YoungIn}) (see Corollary \ref{MMsharp}). In fact, the quest for the sharp forms of these inequalities in the other cases, which could be seen as analogues of the remarkable works of Beckner \cite{Be}, Brascamp-Lieb \cite{BL} and Lieb \cite{Li}, seems to be a very difficult problem in harmonic analysis.

Finally, in Section 6, we provide further applications of the methods described here. We first present in Section 6.1 a description of the estimates for collision kernels with radial non-increasing potentials that leads to multiple Young's inequalities in Corollaries~\ref{C7} and~\ref{C8}. 
In Section 6.2, we study the corresponding inequalities for  the loss operator $Q^-$ in Corollaries~\ref{C9} and~\ref{C10}, and see that the lack of convolution symmetry restricts the range of exponents.

In Section 6.3 we prove Young-type estimates with exponential  weights for  hard potentials both for the elastic and strictly (dissipative) inelastic case.  We show in Proposition~\ref{IMW} that in the elastic case the weighted estimates with classical Maxwellian tails hold but have a polynomial weighted norm in one of the components.
However, things get better in the strict dissipative $e(z)<1$ case, as shown in Theorem~\ref{ISW}, where tails  of order $\lambda$ and decay rate $a$ are preserved (called stretched exponential tails), with order $0\le\lambda\le 2$.
 These are important tools in the study of propagation of moments \cite{BGP} and $L^{1}-L^{\infty}$-exponentially weighted comparison principles \cite{GPV}.

\subsection{Related literature}  Young-type inequalities in Theorem~\ref{Young} for the collisional integrals,
in the case of Maxwell type and hard potentials, reveal the convolution nature of the operator $Q^{+}$. In the elastic case,
this observation was first introduced by Gustafsson \cite{G} for  an assumed angular cross section function with pointwise
 cut-off from away from $zero$ (grazing collisions) and $\pi$ (head-on collisions). Later, Mouhot and Villani \cite[Theorem 2.1]{MV}  revisited the work of Gustafsson under the same conditions to obtain a
different control of the constants (observe however that the constant in their Young's inequality blows up at the endpoints, due to the pointwise cut-off assumption). Also, standard integrability of the angular cross section is required away from the cut-off. Later, Duduchava-Kirsch-Rjasanow \cite{DKR} used  elementary techniques to obtain a simpler proof of the control
of the $L^p$-norm for the $Q^+$ operator in three dimensions by means of  $L^{1}$  and $L^{p}$ norms where the constants in our present
work match precisely the ones obtained in \cite{DKR} for the case $(p,q,r) = (p,1,p)$.
 About the same time,  Gamba, Panferov and Villani, \cite[Lemma 4.1]{GPV2}, presented related estimates that use dual integrability
of the functions, namely, $f$ and $g$ belonging to $L^{1}\cap L^{p}$. This stronger assumption presented in \cite{GPV2} allowed them
to remove the restriction of pointwise cut-offs and use only the integrability of the cross section. Related work on $L^p_k$ estimates for the elastic or the dissipative collisional integrals was also done by Gamba-Panferov-Villani \cite{GPV},  Bobylev-Gamba-Panferov \cite{BGP}, and Mischler-Mouhot-Ricard \cite{MMR}.

 In particular, the following observation is worth stressing: some techniques valid in the elastic case are not available in the inelastic (dissipative) one. For instance, defining the angular kernel in a half spherical domain is not possible due to the lack of symmetry, and in particular, such lack of symmetry forces
 a dual integrability assumption in the  derivation of $L^p$ estimates done in \cite[Lemma 4.1]{GPV2}.
In fact, dual integrability assumption is adequate  for the \textit{quadratic} operator, and also  in some cases for bilinear estimates, such as when studying propagation of derivatives for the conservative case.
However, it is not always the case that such stronger assumptions hold, an so one needs to study a purely bilinear estimate.  This is precisely the case for comparison techniques, where a function $g$, which in principle is unrelated to a solution $f$, is ``compared'' with the solution and so  the bilinear form $Q(g,f)$ appears.  In general, there is no reason on why this function $g$ may  have dual integrability, and so, the previous technique will not apply.
Finally, we mention that the authors in  \cite{BGP} have already used the angular averaging mechanism  occurring for $\mathcal{P}$ (to be introduced in the next section) in order to obtain sharp moment decay formulas for the gain operator, both in the case of elastic and inelastic (with constant restitution coefficient) hard potentials.  These results lead to the study of the regularity and the aforementioned Gaussian propagation for the solutions of the Boltzmann equation using comparison techniques \cite{AG08} and \cite{GPV}.

In the soft potentials case, Theorem~\ref{HLS} also reinforces the convolution character of $Q^{+}(f,g)$.
It is however important to notice that the weak formulation of the collisional integral as a weighted convolution
(representation (\ref{e3})) is crucial to attain the results by estimating multilinear integrals by convex type estimates, maximized in their symmetrization and the connections to Brascamp-Lieb-Luttinger inequalities \cite{BLL, BL}. It is also worth mentioning that the classical Hardy-Littlewood-Sobolev was used in connection to  Cancellation Lemmas of  Alexandre-Desvillettes-Villani-Wennberg \cite{ADVW} and Villani \cite{Vi} who constructed estimates with singular kernels and non-integrable cross sections, assuming $L^p_k$ integrability to control the $L^1$-norm of the total collisional operator. Our work is not related to theirs, but rather extends to any radial non-increasing potential,  showing that the collisional integral satisfies a Hardy-Littlewood-Sobolev-type inequality both for $Q^+$ and $Q^-$ independently (see also Corollaries~\ref{C9} and \ref{C10}), providing exact representations for the constants, and assuming only $L^p$-integrability. In this sense it is the first time where such connection was made.

Finally, recent main applications of this work are mentioned below. Alonso and Gamba \cite{AG}
used  this result to obtain classical solutions and $L^p$-stability , $1\le  p\le\infty$,  for the
Cauchy problem associated to the Boltzmann equation for soft potentials  and singular radially
symmetric decreasing potentials, with integrable cross section, for initial data near vacuum or
large data near local Maxwellian distribution. In addition, Alonso and Lods \cite{Alon-Lods} have
shown the pointwise control to inelastic  homogeneous cooling solutions for the inelastic Boltzmann
equation for hard spheres and the conjectured Haff Law by means of a corresponding Young-type estimate
for the inelastic Boltzmann collision operator with stretched exponential weights.

\medskip

%%%%%%%%%%%%%%%%%%%%%%%%%%%%%%%%%%%%%%%%%%%%%%%%%%%%%%%%%%%%%%%%%%%%%%%%%%%%%%%%%%%%%%%%%%%%%%%%%%%%%%%%%%%%%%%%%%%%%%%%%%%%%%%%%%%%%%%%%%%%%%%%%%%%%%%%%%%%%%%%%%%%%%%%%%%%%%%%%%%%%%%%%%%%%%%%%%%%%%%%%%%%%%%%%%%%%%%%%%%%%%%%%%%%%%%%%%%%%%%%%%%%%%%%%%%%%%%%%%%%%%%%%%%%%%%%%%%%%%%%%%%%%%%%%%%%%%%%%%%%%%%%%%%%%%%%%%%%%%%%%%%%%%%%%%%%%%%%%%%%%%%%%%%%%%%%%%%%%%%%%%%%%%%%%%%%%%%%%%

\section{Radial Symmetrization and the Operator $\mc{P}$}
Let $G = SO(n)$ be the group of rotations of $\R^n$ (orthonormal transformations of determinant $1$), in which we will use the variable $R$ to designate a generic rotation. We assume that the Haar measure $\textrm{d} \mu$ of this compact topological group is normalized so that
\begin{equation*}\label{S2.14}
\int_{G} \dmu = 1.
\end{equation*}
Let $f \in L^p(\R^n)$, $p\geq 1$. We define the radial symmetrization $\fs_p$ by
\begin{equation}\label{S2.15}
\fs_p(x) = \left(\int_{G} |f(Rx)|^p \,\dmu\right)^{\tfrac{1}{p}}\,, \ \ \textrm{if} \ \ 1\leq p< \infty.
\end{equation}
and
\begin{equation}\label{S2.15.1}
\fs_{\infty}(x) = \textrm{ess sup}_{|y| =|x|} |f(y)|
\end{equation}
where the essential sup in (\ref{S2.15.1}) is taken over the sphere of radius $|x|$ with respect to the surface measure over this sphere. This radial rearrangement $\fs_p$ defined in (\ref{S2.15})-(\ref{S2.15.1}) can be seen as an $L^p$-average of $f$ over all the rotations $R \in G$ and it satisfies the following properties:
\begin{itemize}
 \item[(i)] $\fs_p$ is radial.
\item[(ii)] If $f$ is continuous (or compactly supported) then $\fs_p$ is also continuous (or compactly supported).
\item[(iii)] If $g$ is a radial function then $(fg)^{\star}_p(x) = \fs_p(x)g(x)$.
\item[(iv)] Let $\textrm{d} \nu$ be a rotationally invariant measure on $\R^n$. Then
$$\int_{\R^n} |f(x)|^p \, \dnu = \int_{\R^n} |\fs_p(x)|^p \, \dnu.$$
In particular,
\begin{equation*}\label{S2.15.2}
\|f\|_{L^p(\R^n)} = \|\fs_p\|_{L^p(\R^n)}.
\end{equation*}
\end{itemize}
We noticed that this is not the classical rearrangement by monotonicity of level sets associated to $f$, stills has the property that preserves the $L^p$-norm, and so we are driven to prove a convex inequality type of result  that can be viewed as an analog to a Brascamp Lieb and Luttinger type inequalities, where  multilinear convolution type integrals are maximized on their (classical) rearrangements
and controlled by multiple Minkowski type of convexity estimates \cite{BLL,BL}.

The following result, our first in this manuscript,  is a redo of \cite[Lemma 4]{AC}. We
 produce here a different proof of that in \cite{AC} (which use a Carleman integral type representation), and present a new approach that exhibits the similarity to those arguments of classical rearrangement inequalities such as those in  \cite{BLL,BL}. These new approach is
 crucial to perform the extensions to soft potentials through Hardy-Littlewood-Sobolev and Lieb \cite{LiLo} type of inequalities.

\begin{lemma}\label{sym lemma}
Let $f, g, \psi \in C_0(\mathbb{R}^n)$ and $1/p \,+ \,1/q \,+\, 1/r = 1$, with $1\leq p,q,r \leq \infty$. Then
\begin{equation*}\label{e7}
\left|\int_{\mathbb{R}^{n}}\mathcal{P}(f,g)(u)\psi(u)\,\du \right|\leq \int_{\mathbb{R}^{n}}\mathcal{P}(f^{\star}_p,g^{\star}_q)(u)\psi^{\star}_r(u)\, \du.
\end{equation*}
\end{lemma}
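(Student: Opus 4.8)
The plan is to exploit the rotational invariance of the kernel $b(\hat u\cdot\omega)\,\dom$ defining $\mathcal P$ together with the fact that radial symmetrization is an $L^p$-average over rotations, and then to finish with Minkowski's integral inequality (i.e. $L^p$-triangle inequality for integrals over a measure space). First I would record the key invariance: for any rotation $R\in SO(n)$, the change of variables $\omega\mapsto R\omega$ in the defining integral (\ref{P}), combined with $(Ru)^{\pm}=R(u^{\pm})$ — which holds because $u^{\pm}$ in (\ref{variables u}) are built only from $u$, $|u|$, $\omega$ and the scalar $\beta$ depending on $|u|$ and $\hat u\cdot\omega$, all of which commute with $R$ — gives
\begin{equation*}
\mathcal P(f,g)(Ru)=\int_{S^{n-1}}f\big(R u^{-}\big)\,g\big(R u^{+}\big)\,b(\hat u\cdot\omega)\,\dom
=\mathcal P(\tau f,\tau g)(u)\big|_{\tau=\text{composition with }R},
\end{equation*}
more precisely $\mathcal P(f,g)(Ru)=\mathcal P(f\circ R,\,g\circ R)(u)$. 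Hence for the triple integral $I:=\int_{\R^n}\mathcal P(f,g)(u)\psi(u)\,\du$ we may replace the integrand by its average over $G$: writing $u=Rw$, integrating in $w$, and then integrating $\dmu(R)$ over $G$ (normalized to mass $1$), we get
\begin{equation*}
I=\int_{G}\!\int_{\R^n}\mathcal P(f\circ R,g\circ R)(w)\,\psi(Rw)\,\dw\,\dmu(R)
=\int_{G}\!\int_{\R^n}\!\int_{S^{n-1}} f(Rw^{-})g(Rw^{+})\psi(Rw)\,b\,\dom\,\dw\,\dmu(R).
\end{equation*}

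Next I would bound $|I|$ by moving the absolute value inside all integrals, so that $|I|\le\int_{G}\int_{\R^n}\int_{S^{n-1}}|f(Rw^{-})|\,|g(Rw^{+})|\,|\psi(Rw)|\,b(\hat w\cdot\omega)\,\dom\,\dw\,\dmu(R)$. Now swap the order so that the $G$-integral is innermost, and apply Hölder's inequality on the probability space $(G,\dmu)$ with exponents $p,q,r$ (using $1/p+1/q+1/r=1$): for fixed $w$ and $\omega$,
\begin{equation*}
\int_{G}|f(Rw^{-})|\,|g(Rw^{+})|\,|\psi(Rw)|\,\dmu(R)
\le\left(\int_G|f(Rw^{-})|^{p}\dmu\right)^{1/p}\!\!\left(\int_G|g(Rw^{+})|^{q}\dmu\right)^{1/q}\!\!\left(\int_G|\psi(Rw)|^{r}\dmu\right)^{1/r},
\end{equation*}
which is exactly $f^{\star}_p(w^{-})\,g^{\star}_q(w^{+})\,\psi^{\star}_r(w)$ by the definition (\ref{S2.15}). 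Reassembling the $\dom$ and $\dw$ integrals recovers $\int_{\R^n}\mathcal P(f^{\star}_p,g^{\star}_q)(w)\,\psi^{\star}_r(w)\,\dw$, which is the desired right-hand side (note this integrand is already nonnegative, so no further absolute values are needed). The $L^\infty$ endpoint cases are handled by the obvious modification, pulling the sup out of the relevant integral and using (\ref{S2.15.1}); the continuity/compact support hypotheses on $f,g,\psi$ guarantee all integrals converge absolutely and the interchanges of order (Fubini–Tonelli on the compact group $G$ times $S^{n-1}$ times a bounded region of $\R^n$) are justified.

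The main obstacle, and the one point that deserves care rather than routine verification, is the covariance identity $(Ru)^{\pm}=R(u^{\pm})$ together with the invariance of the angular factor; one must check that in $u^{-}=\tfrac{\beta}{2}(u-|u|\omega)$ and $u^{+}=(1-\beta)u+\tfrac{\beta}{2}(u+|u|\omega)$, after the substitution $\omega\mapsto R\omega$ the scalar $\beta$ evaluated via (\ref{beta}) is unchanged because $\widehat{Ru}\cdot R\omega=\hat u\cdot\omega$ and $|Ru|=|u|$, so $\beta(|Ru|\sqrt{(1-\widehat{Ru}\cdot R\omega)/2})=\beta(|u|\sqrt{(1-\hat u\cdot\omega)/2})$, and likewise $b(\widehat{Ru}\cdot R\omega)=b(\hat u\cdot\omega)$ and $\dom$ is rotation invariant. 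Once this symmetry is in place, the rest is a clean average-over-rotations-then-Hölder argument, structurally parallel to the rearrangement-inequality proofs of Brascamp–Lieb–Luttinger referenced in the text, and it replaces the Carleman-representation computation of \cite{AC}.
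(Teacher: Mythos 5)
Your proposal is correct and follows essentially the same route as the paper's proof: the rotational covariance $\mathcal{P}(f,g)(Ru)=\mathcal{P}(f\circ R,g\circ R)(u)$, averaging the resulting bound over $G=SO(n)$ with the normalized Haar measure, and then H\"older's inequality with exponents $p,q,r$ on $(G,\textrm{d}\mu)$ to produce $f^{\star}_p(u^{-})\,g^{\star}_q(u^{+})\,\psi^{\star}_r(u)$. The only differences are cosmetic (you verify the covariance identity and the Fubini/endpoint details more explicitly than the paper does).
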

\begin{proof}
From (\ref{beta}), (\ref{P}) and (\ref{variables u}) we observe that for any rotation $R$ one has
\begin{equation*}
\mathcal{P}(f,g)(Ru)=\mathcal{P}(f\circ R,g\circ R)(u).
\end{equation*}
Therefore,
\begin{align}
\begin{split}\label{Sym}
\Bigl|\int_{\mathbb{R}^{n}}\mathcal{P}(f,g)(u)&\,\psi(u)\, \du\Bigr| =\Bigl|\int_{\mathbb{R}^{n}}\mathcal{P}(f,g)(Ru)\,\psi(Ru)\, \du\Bigr|\\
&=\Bigl|\int_{\mathbb{R}^{n}}\mathcal{P}(f\circ R,g\circ R)(u)\,\psi(Ru)\, \du\Bigr|\\
&\leq\int_{\mathbb{R}^{n}}\int_{S^{n-1}}|f(Ru^{-})|\,|g(Ru^{+})| \,|\psi(Ru)|b(\hat{u}\cdot\omega)\,\dom\,\du.
\end{split}
\end{align}
Note that the left hand side of (\ref{Sym}) is independent of $R$.  Thus, an integration over the group $G=SO(n)$ leads to
\begin{align}\label{e9}
\begin{split}
\Bigl|\int_{\mathbb{R}^{n}}\mathcal{P}&(f,g)(u)\psi(u)\, \du\Bigr|\\
&\leq\int_{\mathbb{R}^{n}}\int_{S^{n-1}}\left(\int_{G}|f(Ru^{-})|\, |g(Ru^{+})|\, |\psi(Ru)|\,\dmu\right) b(\hat{u}\cdot\omega)\,\dom \,\du.
\end{split}
\end{align}
An application of H\"{o}lder's inequality with exponents $p$, $q$ and $r$ yields
\begin{equation*}
\int_{G}|f(Ru^{-})|\,|g(Ru^{+})|\,|\psi(Ru)|\,\dmu\leq f^{\star}_p(u^{-})\,g^{\star}_q(u^{+})\,\psi^{\star}_r(u),
\end{equation*}
which together with equation (\ref{e9}) proves the lemma.
\end{proof}

Lemma \ref{sym lemma} shows that $L^{p}$-estimates for the operator $\mathcal{P}$ will follow by considering radial functions. If $f:\mathbb{R}^{n}\rightarrow\mathbb{R}$ is radial, we define the function $\tilde{f}:\R^{+}\rightarrow\mathbb{R}$ by
\begin{equation*}
f(x)=\tilde{f}(|x|).
\end{equation*}
In addition, for any $p\geq1$ and $\alpha\in\mathbb{R}$ we have
\begin{equation}\label{Lp_alpha}
\int_{\mathbb{R}^{n}}f(x)^{p}\;|x|^{\alpha}\,\dx=\left|S^{n-1}\right|\int^{\infty}_{0}\tilde{f}(t)^{p}\;t^{n-1+\alpha}\,\dt.
\end{equation}
Hence, if we define the measure $\nu_{\alpha}$ on $\R^n$ by
\begin{equation*}
\dnua(x)=|x|^{\alpha}\dx\,,
\end{equation*}
and the measure $\sigma^{\alpha}_{n}$ on $\R^{+}$ by
\begin{equation*}
\dsig(t)=t^{n-1+\alpha}\dt\,,
\end{equation*}
equation (\ref{Lp_alpha}) translates to
\begin{equation}\label{e10}
||f||_{L^{p}(\mathbb{R}^{n},\, \dnua)}=\left|S^{n-1}\right|^{\tfrac{1}{p}}\;||\tilde{f}||_{L^{p}(\mathbb{R}^{+},\, \dsig)}.
\end{equation}
In the following computation we show how the operator $\mathcal{P}$ simplifies to a $1$-dimensional operator when applied to radial functions.  If $f$ and $g$ are radial, then
\begin{align}\label{e11}
\begin{split}
\mathcal{P}(f,g)(u)&=\int_{S^{n-1}}\tilde{f}\left(|u^{-}|\right)\tilde{g}\left(|u^{+}|\right)\, b(\hat{u}\cdot\omega)\, \dom\\
&=\int_{S^{n-1}}\tilde{f}\bigl(a_1(|u|, \hat{u}\cdot\omega)\bigr)\tilde{g}\bigl(a_2(|u|, \hat{u}\cdot\omega)\bigr)\, b(\hat{u}\cdot\omega)\,\dom\\
&=\left|S^{n-2}\right|\int^{1}_{-1}\tilde{f}\bigl(a_1(|u|, s)\bigr)\tilde{g}\bigl(a_2(|u|, s)\bigr) b(s)\,(1-s^{2})^{\tfrac{n-3}{2}}\,\ds.
\end{split}
\end{align}
The functions $a_1$ and $a_2$ are defined on $\mathbb{R}^{+}\times[-1,1]\rightarrow\mathbb{R}^{+}$ by
\begin{equation}\label{e11.5}
a_1(x,s)=\beta\ x\left(\tfrac{1-s}{2}\right)^{1/2}\ \ \mbox{and}\ \ a_2(x,s)=x\left[\left(\tfrac{1+s}{2}\right)+(1-\beta)^{2}\left(\tfrac{1-s}{2}\right)\right]^{1/2}.
\end{equation}

We conclude from (\ref{e11}) that
\begin{equation}\label{e12}
\widetilde{\mathcal{P}(f,g)}(x)=\left|S^{n-2}\right|\int^{1}_{-1}\tilde{f}\left(a_1(x, s)\right)\,\tilde{g}\left(a_2(x, s)\right)\, \dxi(s)\,,
\end{equation}
where the measure $\xi_{n}^{b}$ on $[-1,1]$ is defined as
\begin{equation*}
\dxi(s)=b(s)(1-s^{2})^{\tfrac{n-3}{2}}\,\ds\,.
\end{equation*}
In virtue of equation (\ref{e12}) we define the following bilinear operator for any two bounded and continuous functions $f,g :\R^{+} \to \R$,
\begin{equation}\label{e12.5}
\mathcal{B}(f,g)(x):=\int^{1}_{-1}f\left(a_1(x, s)\right)\,g\left(a_2(x, s)\right)\dxi(s).
\end{equation}
{\bf Remark.}
It is worth to notice that in the case of constant parameter $\beta$ (which includes elastic interactions) the functions
$a_1$ and $a_2$ of the variable interactions are actually functions of the form $a_1=x \alpha_1(s)$ and $a_2=x\alpha_2(s)$;
that is  $a_1$ and $a_2$ are first order homogeneity in
 their radial part and their angular part is a positive,  bounded by unity function of the angular parametrization $s$. This observation is at the heart of the analysis made in \cite{BCG-09} which shows compactness properties of the spectral structure associated to the  bilinear form \eqref{e12.5}.

For the operator in \eqref{e12.5} we have the following bound.
\begin{lemma}\label{L1}
Let $1\leq p,q,r\leq\infty$ with $1/p + 1/q = 1/r$. For $f \in L^p(\mathbb{R}^{+},\dsig)$ and $g \in L^q(\mathbb{R}^{+},\dsig)$ we have
\begin{equation}\label{e13}
\left\|\mathcal{B}(f,g)\right\|_{L^r(\mathbb{R}^{+},\, \dsig)}\leq C\, \left\|f\right\|_{L^p(\mathbb{R}^{+},\, \dsig)}\left\|g\right\|_{L^q(\mathbb{R}^{+},\,\dsig)},
\end{equation}
where the constant $C$ is given in {\rm (\ref{const B})} below. In the case of constant restitution coefficient $e$, corresponding to a constant parameter $\beta=(1+e)/2$, one can show that
\begin{equation}\label{sharp const}
C(n,\alpha,p,q,b, \beta)=\beta^{-\tfrac{n+\alpha}{p}}\int^{1}_{-1}\left(\tfrac{1-s}{2}\right)^{-\tfrac{n+\alpha}{2p}}\left[\left(\tfrac{1+s}{2}\right)+(1-\beta)^{2}\left(\tfrac{1-s}{2}\right)\right]^{-\tfrac{n+\alpha}{2q}}\dxi(s)
\end{equation}
is sharp.
\end{lemma}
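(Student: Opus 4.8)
### Proof proposal for Lemma \ref{L1}

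The plan is to reduce the bound on $\mathcal{B}$ to a one-dimensional Minkowski-type estimate via a change of variables that turns the weighted $L^r(\R^+,\dsig)$ norm into a dilation-invariant quantity. First I would write out the target norm explicitly using $\|\mathcal{B}(f,g)\|_{L^r(\R^+,\dsig)}^r = \int_0^\infty \left(\int_{-1}^1 f(a_1(x,s))\,g(a_2(x,s))\,\dxi(s)\right)^r x^{n-1+\alpha}\,\dx$, then apply Minkowski's integral inequality in $L^r(\R^+,\dsig)$ to pull the $s$-integral outside: $\|\mathcal{B}(f,g)\|_{L^r(\R^+,\dsig)} \leq \int_{-1}^1 \left\| x \mapsto f(a_1(x,s))\,g(a_2(x,s)) \right\|_{L^r(\R^+,\dsig)} \dxi(s)$. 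For each fixed $s$, the inner norm is a genuine one-dimensional object in the variable $x$, and the two factors are dilates of $f$ and $g$ by the scalar factors $a_1(x,s)/x$ and $a_2(x,s)/x$ (which in the constant-$\beta$ case are exactly the $s$-dependent constants $\alpha_1(s) = \beta\left(\tfrac{1-s}{2}\right)^{1/2}$ and $\alpha_2(s) = \left[\left(\tfrac{1+s}{2}\right)+(1-\beta)^2\left(\tfrac{1-s}{2}\right)\right]^{1/2}$ from the Remark).

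Next I would apply Hölder's inequality in the inner norm with exponents $p/r$ and $q/r$ (legitimate since $r/p + r/q = 1$), obtaining for each $s$
\begin{equation*}
\left\| x \mapsto f(a_1(x,s))\,g(a_2(x,s)) \right\|_{L^r(\R^+,\dsig)} \leq \left\| f(a_1(\cdot,s)) \right\|_{L^p(\R^+,\dsig)} \left\| g(a_2(\cdot,s)) \right\|_{L^q(\R^+,\dsig)}.
\end{equation*}
Then a substitution $t = a_1(x,s)$ (resp.\ $t = a_2(x,s)$) in each single-variable integral rescales $\dsig$ by the appropriate power: since $a_i(x,s) = x\,\alpha_i(s)$ in the constant-$\beta$ case, one gets $\|f(a_1(\cdot,s))\|_{L^p(\R^+,\dsig)} = \alpha_1(s)^{-\frac{n+\alpha}{p}}\|f\|_{L^p(\R^+,\dsig)}$ and similarly for $g$. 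Collecting the $s$-dependent factors under the $\dxi(s)$ integral yields exactly the constant in \eqref{sharp const} (and in the general non-constant-$\beta$ case, a slightly less clean version, which is what is meant by the reference to \eqref{const B}). This establishes \eqref{e13}.

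For the sharpness claim in the constant-$\beta$ case, I would test the inequality with (approximate) power functions $f(t) = g(t) = t^{-(n+\alpha)/r'}$ suitably truncated or regularized so as to lie in the correct Lebesgue spaces. Such functions are the natural extremizers because the operator $\mathcal{B}$ commutes with dilations in a way compatible with the measure $\dsig$ (homogeneity degree $n-1+\alpha$): formally, $f \mapsto f(\lambda\,\cdot)$ scales both sides of \eqref{e13} by the same power of $\lambda$, so the extremal profile must itself be scale-invariant, forcing a pure power. Plugging this profile in, the $x$-integral factors out and one is left with precisely the integral over $s$ that defines $C$ in \eqref{sharp const}, showing no smaller constant can work. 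The main obstacle I anticipate is the sharpness argument rather than the upper bound: one must handle the fact that the exact power function is not in $L^p$, so a careful truncation/limiting argument is needed to show the ratio of the two sides converges to $C$; in addition, one should verify that the single application of Hölder's inequality inside the $s$-integral is asymptotically an equality along this family (it is, because the two dilated profiles become proportional in the limit), which is what makes the Minkowski-then-Hölder chain tight. The computation of the general constant \eqref{const B} when $\beta$ is non-constant is routine but bookkeeping-heavy, since then $a_i(x,s)$ is no longer exactly homogeneous of degree one in $x$ — though, as the Remark notes, it still is for constant $\beta$, which is precisely the regime where sharpness is claimed.
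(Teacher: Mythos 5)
Your upper-bound chain (Minkowski in $L^r(\R^+,\dsig)$ to pull out the $s$-integral, then H\"older with exponents $p/r$ and $q/r$, then the substitutions $t=a_1(x,s)$, $t=a_2(x,s)$) is exactly the paper's argument, and in the constant-$\beta$ case your rescaling computation reproduces \eqref{sharp const} correctly. The gap is in how you treat the general, non-constant $\beta$: you dismiss it as ``routine bookkeeping,'' but there the substitution is not a rescaling at all, and its validity is not automatic. One needs hypothesis (ii) ($z\mapsto ze(z)$ non-decreasing) to know that $x\mapsto a_1(x,s)=\beta(z)z$ (with $z=x\sqrt{\tfrac{1-s}{2}}$) is monotone so the change of variables is legitimate, and one only gets \emph{lower bounds} on the Jacobians, \eqref{Jacob 1} via $\beta\geq\tfrac12$ and \eqref{Jacob 2} via the monotonicity of $\beta$ (hypothesis (i)), which is where $\beta_0=\beta(0)$ and the extra factor $2^{\tfrac{n+\alpha}{p}}$ in the non-sharp constant \eqref{const B} come from. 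Without invoking these structural assumptions on the restitution coefficient your proof of \eqref{e13} only covers constant $\beta$.

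The sharpness step also has a concrete error: you test with $f(t)=g(t)=t^{-(n+\alpha)/r'}$, but the scaling of the two norms forces \emph{different} powers for $f$ and $g$, namely $f\sim t^{-(n+\alpha)/p}$ and $g\sim t^{-(n+\alpha)/q}$ (these are the borderline profiles for $L^p(\R^+,\dsig)$ and $L^q(\R^+,\dsig)$ respectively); with a common exponent $-(n+\alpha)/r'$ the truncated ratio does not converge to $C$ unless $p=q=r'$. The paper uses precisely the regularized family $f_\epsilon(x)=\epsilon^{1/p}x^{-(n+\alpha-\epsilon)/p}$, $g_\epsilon(x)=\epsilon^{1/q}x^{-(n+\alpha-\epsilon)/q}$ supported in $(0,1)$, normalized in their respective spaces, and lets $\epsilon\to0$; with those exponents your observation that H\"older (and Minkowski) become asymptotically tight is correct, since $|f_\epsilon(a_1)|^p$ and $|g_\epsilon(a_2)|^q$ are proportional as functions of $x$. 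So the method you outline is the right one, but the test family as written would fail to produce the claimed sharp constant.
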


\begin{proof}
Using Minkowski's inequality and H\"{o}lder's inequality with exponents $p/r$ and $q/r$ we obtain
\begin{align*}\label{e14}
\begin{split}
\bigl\|\mathcal{B}(f&,g)\bigr\|_{L^r(\mathbb{R}^{+},\,\dsig)}\leq\int^{1}_{-1}\left(\int^{\infty}_{0}\left|f(a_1(x,s))\right|^{r}\left|g(a_2(x,s))\right|^{r}\dsig(x)\right)^{\tfrac{1}{r}}\dxi(s)\\
&\leq\int^{1}_{-1}\left(\int^{\infty}_{0}\left|f(a_1(x,s))\right|^{p}\dsig(x)\right)^{\tfrac{1}{p}}\left(\int^{\infty}_{0}\left|g(a_2(x,s))\right|^{q}\dsig(x)\right)^{\tfrac{1}{q}}\dxi(s).
\end{split}
\end{align*}
Since the function $z\rightarrow z e(z)$ is non-decreasing, the change of variables $y=a_1(x,s)$ is valid for any fixed $s\in[-1,1)$, and its inverse Jacobian satisfies
\begin{equation}\label{Jacob 1}
\left|\frac{\textrm{d}a_1}{\dx}\right|\geq \frac{1}{2}\left(\tfrac{1-s}{2}\right)^{\tfrac{1}{2}}.
\end{equation}
Moreover, using the fact that $\beta \geq 1/2$, we arrive at
\begin{equation}\label{eap1}
\left(\int^{\infty}_{0}\left|f(a_1(x,s))\right|^{p}\dsig(x)\right)^{\tfrac{1}{p}}\leq2^{\tfrac{n+\alpha}{p}}\left(\tfrac{1-s}{2}\right)^{-\tfrac{n+\alpha}{2p}}\left\|f\right\|_{L^p(\mathbb{R}^{+},\, \dsig)}.
\end{equation}
Using a similar analysis for the change of variables $y=a_2(x,s)$, exploiting the fact that $\beta$ is non-increasing, we obtain
\begin{equation}\label{Jacob 2}
\left|\frac{\textrm{d}a_2}{\dx}\right|\geq \left[\left(\tfrac{1+s}{2}\right)+(1-\beta_0)^{2}\left(\tfrac{1-s}{2}\right)\right]^{\tfrac{1}{2}}\,,
\end{equation}
where $\beta_{0}=\beta(0)$.  We then arrive at
\begin{equation*}
\left(\int^{\infty}_{0}\left|g(a_2(x,s))\right|^{q}\dsig(x)\right)^{\tfrac{1}{q}}\leq\left[\left(\tfrac{1+s}{2}\right)+(1-\beta_0)^{2}\left(\tfrac{1-s}{2}\right)\right]^{-\tfrac{n+\alpha}{2q}}\left\|g\right\|_{L^q(\mathbb{R}^{+},\, \dsig)},
\end{equation*}
This gives (\ref{e13}) with constant
\begin{equation}\label{const B}
C=2^{\tfrac{n+\alpha}{p}}\int^{1}_{-1}\left(\tfrac{1-s}{2}\right)^{-\tfrac{n+\alpha}{2p}}\left[\left(\tfrac{1+s}{2}\right)+(1-\beta_0)^{2}\left(\tfrac{1-s}{2}\right)\right]^{-\tfrac{n+\alpha}{2q}}\dxi(s)\,.
\end{equation}
In the case of constant $\beta$, the Jacobians (\ref{Jacob 1}) and (\ref{Jacob 2}) can be explicitly computed and the proposed change of variables leads to the constant (\ref{sharp const}). To prove that the constant (\ref{sharp const}) is the best possible in this case, one can consider the sequences $\{f_\epsilon\}$ and $\{g_{\epsilon}\}$ with $\epsilon >0$ defined by
\begin{equation*}
f_{\epsilon}(x) =
\left\{
\begin{array}{cl}
\epsilon^{1/p}\, x^{-(n+\alpha - \epsilon)/p} & \textrm{for}  \ \ 0<x<1\,,\\
0 & \textrm{otherwise}.
\end{array}
\right.
\end{equation*}
and
\begin{equation*}
g_{\epsilon}(x) =
\left\{
\begin{array}{cl}
\epsilon^{1/q}\, x^{-(n+\alpha - \epsilon)/q} & \textrm{for}  \ \ 0<x<1\,,\\
0 & \textrm{otherwise}.
\end{array}
\right.
\end{equation*}
Clearly,
\begin{equation*}
\left\|f_\epsilon\right\|_{L^p(\mathbb{R}^{+},\, \dsig)} = \left\|g_\epsilon\right\|_{L^q(\mathbb{R}^{+},\, \dsig)}=1\,,
\end{equation*}
and one can check that
\begin{equation*}
\left\|\mathcal{B}(f_{\epsilon},g_{\epsilon})\right\|_{L^r(\mathbb{R}^{+},\, \dsig)} \rightarrow C\,,
\end{equation*}
as $\epsilon \to 0$, where $C$ is the constant defined in (\ref{sharp const}). The detailed argument is outlined in \cite{AC}, in the case $\beta = 1$.
\end{proof}

From Lemma \ref{sym lemma} we have
\begin{equation*}
\|\mathcal{P}(f,g)\|_{L^r(\mathbb{R}^n,\,\dnua)}\leq\|\mathcal{P}(f^{\star}_{p},g^{\star}_{q})\|_{L^r(\mathbb{R}^n,\, \dnua)}\,,
\end{equation*}
where $1/p+1/q=1/r$. Using equations (\ref{e10}), (\ref{e12}) and Lemma \ref{L1} we obtain
\begin{align}\label{e15}
\begin{split}
\|\mathcal{P}(f^{\star}_{p},g^{\star}_{q})&\|_{L^r(\mathbb{R}^n,\, \dnua)}=\left|S^{n-1}\right|^{\tfrac{1}{r}}\ \left\|\widetilde{\mathcal{P}(f^{\star}_{p},g^{\star}_{q})}\right\|_{L^r(\mathbb{R}^+,\, \dsig)}\\
&=\left|S^{n-1}\right|^{\tfrac{1}{r}}\ \left|S^{n-2}\right|\ \|\mathcal{B}(\tilde{f^{\star}_p},\tilde{g^{\star}_q})\|_{L^r(\mathbb{R}^+,\, \dsig)}\\
&\leq C\ \left|S^{n-1}\right|^{\tfrac{1}{r}}\ \left|S^{n-2}\right|\ \|\tilde{f^{\star}_p}\|_{L^p(\mathbb{R}^+,\, \dsig)}\ \|\tilde{g^{\star}_q}\|_{L^q(\mathbb{R}^+,\, \dsig)}\\
&=C\ \left|S^{n-2}\right|\ \|f\|_{L^p(\mathbb{R}^n,\, \dnua)}\ \|g\|_{L^q(\mathbb{R}^n,\, \dnua)}\,,
\end{split}
\end{align}
and thus we have proved the following result.
\begin{theorem}\label{T3}
Let $1 \leq p,q,r \leq \infty$ with $1/p + 1/q = 1/r$, and $\alpha \in\mathbb{R}$. The bilinear operator $\mathcal{P}$ extends to a bounded operator from $L^p(\mathbb{R}^n, \dnua) \times L^q(\mathbb{R}^n, \dnua)$ to $L^r(\mathbb{R}^n, \dnua)$ via the estimate
\begin{equation*}
\left\|\mathcal{P}(f,g)\right\|_{L^r(\mathbb{R}^n,\, \dnua)}\leq \, C \, \left\|f\right\|_{L^p(\mathbb{R}^n,\, \dnua)}\left\|g\right\|_{L^q(\mathbb{R}^n,\, \dnua)}.
\end{equation*}
Moreover, in the case of constant restitution coefficient $e$, the constant
\begin{equation*}
C = \left|S^{n-2}\right|\beta^{-\tfrac{n+\alpha}{p}}\int^{1}_{-1}\left(\tfrac{1-s}{2}\right)^{-\tfrac{n+\alpha}{2p}}\left[\left(\tfrac{1+s}{2}\right)+(1-\beta)^{2}\left(\tfrac{1-s}{2}\right)\right]^{-\tfrac{n+\alpha}{2q}}\dxi(s)
\end{equation*}
is sharp.
\end{theorem}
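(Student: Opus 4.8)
The plan is to chain together Lemma \ref{sym lemma}, the one-dimensional reduction in \eqref{e10}--\eqref{e12}, and Lemma \ref{L1}, exactly along the computation displayed in \eqref{e15}. Since $b\geq 0$ we have $|\mathcal{P}(f,g)|\leq\mathcal{P}(|f|,|g|)$ pointwise, so it suffices to treat nonnegative $f$ and $g$. The crucial first step is the symmetrization bound $\|\mathcal{P}(f,g)\|_{L^r(\R^n,\dnua)}\leq\|\mathcal{P}(f^{\star}_p,g^{\star}_q)\|_{L^r(\R^n,\dnua)}$ for $1/p+1/q=1/r$, which I would obtain by duality: $\|\mathcal{P}(f,g)\|_{L^r(\R^n,\dnua)}=\sup\{|\int_{\R^n}\mathcal{P}(f,g)(u)\,\psi(u)\,\dnua(u)| : \|\psi\|_{L^{r'}(\R^n,\dnua)}\leq 1\}$ where $1/r'=1-1/r=1-1/p-1/q$, so that $1/p+1/q+1/r'=1$. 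For each such $\psi$, since the measure $|u|^{\alpha}\,\du$ is rotation invariant (as $|Ru|=|u|$), the proof of Lemma \ref{sym lemma} applies verbatim with $\du$ replaced by $\dnua(u)$ throughout: after the substitution $u\mapsto Ru$, averaging over $G=SO(n)$, and H\"older on the group with exponents $p,q,r'$, one gets
\[
\left|\int_{\R^n}\mathcal{P}(f,g)(u)\,\psi(u)\,\dnua(u)\right|\leq\int_{\R^n}\mathcal{P}(f^{\star}_p,g^{\star}_q)(u)\,\psi^{\star}_{r'}(u)\,\dnua(u),
\]
and a further H\"older in $L^r(\dnua)\times L^{r'}(\dnua)$, together with property (iv) (which gives $\|\psi^{\star}_{r'}\|_{L^{r'}(\dnua)}=\|\psi\|_{L^{r'}(\dnua)}$) and the supremum over $\psi$, yields the claim.

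It then remains to bound $\|\mathcal{P}(f^{\star}_p,g^{\star}_q)\|_{L^r(\R^n,\dnua)}$, where the two inputs are radial. Here \eqref{e10} and \eqref{e12} give $\|\mathcal{P}(f^{\star}_p,g^{\star}_q)\|_{L^r(\R^n,\dnua)}=|S^{n-1}|^{1/r}|S^{n-2}|\,\|\mathcal{B}(\widetilde{f^{\star}_p},\widetilde{g^{\star}_q})\|_{L^r(\R^+,\dsig)}$; then Lemma \ref{L1} with constant \eqref{const B} bounds the $\mathcal{B}$-norm by $\|\widetilde{f^{\star}_p}\|_{L^p(\dsig)}\|\widetilde{g^{\star}_q}\|_{L^q(\dsig)}$, and one more application of \eqref{e10} rewrites these as $|S^{n-1}|^{-1/p}\|f^{\star}_p\|_{L^p(\dnua)}$ and $|S^{n-1}|^{-1/q}\|g^{\star}_q\|_{L^q(\dnua)}$. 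The powers of $|S^{n-1}|$ cancel since $1/p+1/q=1/r$, and property (iv) replaces $\|f^{\star}_p\|_{L^p(\dnua)}$ by $\|f\|_{L^p(\dnua)}$ and similarly for $g$. This is exactly the chain \eqref{e15}, and it delivers the asserted estimate with $C=|S^{n-2}|$ times the constant of Lemma \ref{L1}.

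For the sharpness statement when $e$, hence $\beta$, is constant: in that case $x\mapsto a_1(x,s)$ and $x\mapsto a_2(x,s)$ are linear, so the changes of variables in the proof of Lemma \ref{L1} become equalities and its constant is exactly \eqref{sharp const}; thus $C$ above is $|S^{n-2}|$ times \eqref{sharp const}. To see that this cannot be improved for $\mathcal{P}$ itself, I would feed the radial functions $f_\epsilon(v)=\widetilde{f_\epsilon}(|v|)$, $g_\epsilon(v)=\widetilde{g_\epsilon}(|v|)$ built from the extremizing sequences of Lemma \ref{L1} directly into $\mathcal{P}$. Because these inputs are radial, \eqref{e10} and \eqref{e12} hold with equality, so the quotient $\|\mathcal{P}(f_\epsilon,g_\epsilon)\|_{L^r(\dnua)}/(\|f_\epsilon\|_{L^p(\dnua)}\|g_\epsilon\|_{L^q(\dnua)})$ equals $|S^{n-2}|$ times the corresponding quotient for $\mathcal{B}$, which converges to $|S^{n-2}|$ times \eqref{sharp const}. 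Matching this lower bound against the upper bound already proved gives sharpness.

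The routine parts are the change-of-variables bookkeeping (already carried out in Lemma \ref{L1}) and tracking the exponents of $|S^{n-1}|$. The only genuinely delicate point is the very first step --- transporting the weight $|u|^{\alpha}$ through the averaging-and-H\"older argument of Lemma \ref{sym lemma} --- and this succeeds solely because $\nu_\alpha$ is rotation invariant; for a non-radial weight the symmetrization step would break down.
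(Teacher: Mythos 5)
Your proposal is correct and follows essentially the same route as the paper: the displayed chain \eqref{e15} (symmetrization via Lemma \ref{sym lemma}, reduction to radial functions through \eqref{e10} and \eqref{e12}, then Lemma \ref{L1}), with sharpness inherited from the radial extremizing sequences of Lemma \ref{L1} in the constant-$\beta$ case. You merely make explicit two points the paper leaves implicit --- the duality step passing from the trilinear form of Lemma \ref{sym lemma} (with $1/p+1/q+1/r'=1$) to the norm inequality, and the transport of the rotation-invariant weight $|u|^{\alpha}$ through the averaging argument --- both of which are handled correctly.
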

An application of Theorem \ref{T3} with Fourier transform methods provides sharp estimates for the $(2,1,2)$ and $(1,2,2)$ Young's inequalities in the case of Maxwell molecules and constant parameter $\beta$ (they will be treated with more generality in the next section). These are the only cases where we are able to explicitly find the sharp constant and exhibit a family of maximizers for the Young's inequality.

\begin{corollary}\label{MMsharp}
 Let $f\in L^{1}(\mathbb{R}^{n})$ and $g\in L^{2}(\mathbb{R}^{n})$. Then
\begin{equation}
\left\|Q^{+}(f,g)\right\|_{L^{2}(\mathbb{R}^{n})} \leq C_0\,\|f\|_{L^{1}(\mathbb{R}^{n})}\,\|g\|_{L^{2}(\mathbb{R}^{n})}, \label{sharpYoung23}
\end{equation}
with the sharp constant given by
\begin{equation*}
C_0=\left|S^{n-2}\right|\int^{1}_{-1}\left[\left(\tfrac{1+s}{2}\right)+(1-\beta)^{2}\left(\tfrac{1-s}{2}\right)\right]^{-\tfrac{n}{4}}\dxi(s)\,.
\end{equation*}
Similarly, for $f\in L^{2}(\mathbb{R}^{n})$ and $g\in L^{1}(\mathbb{R}^{n})$ we have
\begin{equation}\label{sharpYoung2}
\left\|Q^{+}(f,g)\right\|_{L^{2}(\mathbb{R}^{n})}\leq C_1\,\|f\|_{L^{2}(\mathbb{R}^{n})}\,\|g\|_{L^{1}(\mathbb{R}^{n})},
\end{equation}
with the sharp constant given by
\begin{equation*}
C_1= \left|S^{n-2}\right|\beta^{-\tfrac{n}{2}}\int^{1}_{-1}\left(\tfrac{1-s}{2}\right)^{-\tfrac{n}{4}}\, \dxi(s).
\end{equation*}
\end{corollary}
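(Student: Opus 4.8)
The plan is to combine the Fourier-analytic identity $\widehat{Q^{+}(f,g)} = \mathcal{P}(\hat f,\hat g)$ (valid for Maxwell molecules $\lambda=0$ with constant $\beta$, as recalled in \eqref{FTQ}) with the Plancherel theorem and the sharp weighted $L^p$ estimate for $\mathcal{P}$ from Theorem \ref{T3}, specialized to $\alpha=0$. For the first inequality \eqref{sharpYoung23}, since $f\in L^1$ forces $\hat f\in L^\infty$ and $g\in L^2$ forces $\hat g\in L^2$, I would first establish the pointwise bound $\|\hat f\|_{L^\infty}\le\|f\|_{L^1}$ and then apply Theorem \ref{T3} in the exponent triple corresponding to $1/\infty + 1/2 = 1/2$ (that is, $p=\infty$, $q=2$, $r=2$) with $\alpha=0$: this gives
\begin{equation*}
\|\mathcal{P}(\hat f,\hat g)\|_{L^2(\R^n)} \le C\,\|\hat f\|_{L^\infty(\R^n)}\,\|\hat g\|_{L^2(\R^n)},
\end{equation*}
where $C = |S^{n-2}|\int_{-1}^{1}\big[(\tfrac{1+s}{2}) + (1-\beta)^2(\tfrac{1-s}{2})\big]^{-n/4}\dxi(s)$ is the sharp constant from Theorem \ref{T3} with $p=\infty$ (so the $\beta^{-(n+\alpha)/p}$ factor is $1$ and the first exponent $-\tfrac{n+\alpha}{2p}$ vanishes). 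Then Plancherel on both sides, $\|Q^{+}(f,g)\|_{L^2} = (2\pi)^{-n/2}\|\widehat{Q^{+}(f,g)}\|_{L^2}$ and $\|g\|_{L^2} = (2\pi)^{-n/2}\|\hat g\|_{L^2}$, yields \eqref{sharpYoung23} with $C_0 = C$, the normalization constants cancelling. The second inequality \eqref{sharpYoung2} is entirely analogous, now using the triple $p=2$, $q=\infty$, $r=2$, putting $\hat g\in L^\infty$ and $\hat f\in L^2$; here the $\beta^{-n/2}$ factor and the first angular exponent survive, producing $C_1 = |S^{n-2}|\,\beta^{-n/2}\int_{-1}^{1}(\tfrac{1-s}{2})^{-n/4}\dxi(s)$.

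For sharpness, I would transfer the extremizing sequences constructed in the proof of Lemma \ref{L1} (the functions $f_\epsilon$, $g_\epsilon$ behaving like $x^{-(n-\epsilon)/p}$ near the origin) back through the Fourier transform: since those sequences drive $\|\mathcal{B}(\tilde f^\star_p, \tilde g^\star_q)\|_{L^r}/(\|\tilde f^\star_p\|_{L^p}\|\tilde g^\star_q\|_{L^q}) \to C$, and the chain of inequalities in \eqref{e15} is an equality at the level of radial decreasing profiles (all losses there are Hölder/Minkowski applied to a single product in the $q=\infty$ or $p=\infty$ case, which are saturated by near-power functions concentrating at $0$), one recovers a maximizing family for $\mathcal{P}$ on $L^\infty\times L^2$; pulling these back by inverse Fourier transform gives a maximizing family $\{f\}$, $\{g\}$ for $Q^{+}$, showing $C_0$ (resp.\ $C_1$) cannot be improved. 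Alternatively, and perhaps more cleanly, I would note that the $L^\infty$ endpoint case of the extremizer analysis degenerates into choosing $\hat f$ close to a constant (so $f$ close to a multiple of a Dirac mass, approximated by an approximate identity) and $\hat g$ a radial near-power function, which makes the Hölder step $\int_G |\hat f(Ru^-)||\hat g(Ru^+)|\dmu \le \|\hat f\|_\infty \,\hat g^\star_2(u^+)$ asymptotically sharp.

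The main obstacle I anticipate is the sharpness claim rather than the inequality itself: one must be careful that the extremal sequences for the one-dimensional operator $\mathcal{B}$ in Lemma \ref{L1}, after being identified with radial symmetrizations and inverse-Fourier-transformed, stay within $L^1$ (resp.\ $L^2$) and actually realize the constant in the limit — in particular that no slack is introduced by the radial symmetrization step (property (iv) of $\star_p$ guarantees the $L^p$ norms are preserved, but one needs $\hat f$, $\hat g$ themselves to be, or be well-approximated by, radial decreasing functions for the chain \eqref{e15} to be tight). Handling the $L^\infty$ norm in the symmetrization $\fs_\infty$ and checking the limiting behaviour of $\|\mathcal{B}(f_\epsilon,g_\epsilon)\|_{L^r}$ as $\epsilon\to 0$ is exactly the computation deferred to \cite{AC} in the $\beta=1$ case; for constant $\beta\neq 1$ the Jacobians in \eqref{Jacob 1}–\eqref{Jacob 2} are exact equalities, so the same computation goes through with the $\beta$-dependent change of variables, and this is the point I would check most carefully.
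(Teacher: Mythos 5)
Your proposal follows essentially the same route as the paper: Bobylev's identity \eqref{FTQ} plus Plancherel, then Theorem \ref{T3} at the endpoint exponents $(\infty,2,2)$ (resp.\ $(2,\infty,2)$) with $\alpha=0$, which yields exactly $C_0$ and $C_1$. Your ``alternative, cleaner'' sharpness argument is in fact the paper's own: it takes $\hat f$ near constant via a Gaussian approximate identity (so that $f\ge 0$ and $\|\hat f\|_{L^\infty}=\|f\|_{L^1}$, the very point you flag as delicate) together with the radial near-power $\hat g_\epsilon$ from Lemma \ref{L1}.
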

\begin{proof}
To prove (\ref{sharpYoung23}) observe that
\begin{align}
\begin{split}
\left\|Q^{+}(f,g)\right\|_{L^{2}(\mathbb{R}^{n})}&=\left\|\widehat{Q^{+}(f,g)}\right\|_{L^{2}(\mathbb{R}^{n})}
=\left\|\mathcal{P}(\hat{f},\hat{g})\right\|_{L^{2}(\mathbb{R}^{n})}\\
& \leq C_0\,\|\hat{f}\|_{L^{\infty}(\mathbb{R}^{n})}\,\|\hat{g}\|_{L^{2}(\mathbb{R}^{n})}\leq C_0\,\|f\|_{L^{1}(\mathbb{R}^{n})}\,\|g\|_{L^{2}(\mathbb{R}^{n})}, \label{sharpYoung}
\end{split}
\end{align}
with the constant $C_0$ coming directly from Theorem \ref{T3}.
To guarantee that $C_0$ is indeed the sharp constant in the inequality (\ref{sharpYoung}) we need approximating sequences $\widetilde{\hat{f}}_{\epsilon}$ and $\widetilde{\hat{g}}_{\epsilon}$
slightly different from those presented in the end of the proof of Lemma \ref{L1}, since we would like to impose the additional constraint $f \geq 0$ to have $\|\hat{f}\|_{L^{\infty}(\mathbb{R}^{n})}=\|f\|_{L^{1}(\mathbb{R}^{n})}$. Heuristically, this can be done by considering $f = \delta(x)$ the Dirac delta and so $\hat{f} \equiv 1$. In practice we should choose $f_{\epsilon}$ a Gaussian approximation of the identity by putting
\begin{equation*}
\widetilde{\hat{f}}_{\epsilon}(x) = e^{-\pi \epsilon^2 x^2}\,,
\end{equation*}
and
\begin{equation*}
\widetilde{\hat{g}}_{\epsilon}(x) =
\left\{
\begin{array}{cl}
\epsilon^{1/2}\, x^{-(n- \epsilon)/2} & \textrm{for}  \ \ 0<x<1\,,\\
0 & \textrm{otherwise}.
\end{array}
\right.
\end{equation*}
A similar consideration applies to the inequality (\ref{sharpYoung2}).
\end{proof}
%%%%%%%%%%%%%%%%%%%%%%%%%%%%%%%%%%%%%%%%%%%%%%%%%%%%%%Section 3 %%%%%%%%%%%%%%%%%%%%%%%%%%%%%%%%%%%%%%%%%%%%%%%%%%%%%%%%%%%%%%%%%%%%%%%%%%%%%%%%%%%%%%%%%%%%%%%%%%%%%%%%%%%%%%%%%%%%%%%%%%%%%%

\section{Young's Type Inequality for Hard Potentials}

The goal of this section is to prove Theorem \ref{Young}. First we treat the case $\alpha = \lambda = 0$. The main idea is to use the relation (\ref{e3}) that establishes a connection between the operators $Q^{+}$ and $\mc{P}$, and then use the $L^p$-knowledge of the operator $\mc{P}$ from the previous section. In what follows we assume that all the functions involved are non-negative (since when working with $L^p$-norms one can always use the modulus of a function). From (\ref{e3}) we have
\begin{equation}\label{Young1.1}
I:=\int_{\mathbb{R}^{n}}Q^{+}(f,g)(v)\psi(v)\,\dv=\int_{\mathbb{R}^{n}}\int_{\mathbb{R}^{n}}f(v)g(v-u)\mathcal{P}(\tau_{v}\mathcal{R}\psi,1)(u)\,\du\,\dv.
\end{equation}
Suppose first that $(p,q,r) \neq (1,1,1), (1, \infty, \infty), (\infty,1,\infty)$. The exponents $p,q,r$ in Theorem \ref{Young} satisfy $1/p' + 1/q' + 1/r = 1$, and thus we can regroup the terms conveniently and use H\"{o}lder's inequality
\begin{align}\label{Holder1}
\begin{split}
I=\int_{\mathbb{R}^{n}}\int_{\mathbb{R}^n}\Bigl(f(v)^{\tfrac{p}{r}}&g(v-u)^{\tfrac{q}{r}}\Bigr) \,\Bigl(f(v)^{\tfrac{p}{q'}} \mathcal{P}(\tau_{v}\mathcal{R}\psi,1)(u)^{\tfrac{r'}{q'}}\Bigr)\\& \Bigl(g(v-u)^{\tfrac{q}{p'}}\mathcal{P}(\tau_{v}\mathcal{R}\psi,1)(u)^{\tfrac{r'}{p'}}\Bigr) \du\,\dv\ \leq \ I_1\,I_2\,I_3,
\end{split}
\end{align}
where
\begin{align}\label{add0}
\begin{split}
&I_1:=\left(\int_{\mathbb{R}^n} \int_{\mathbb{R}^n} f(v)^{p} g(v-u)^{q}\, \du\,\dv\right)^{\tfrac{1}{r}}\\
&I_2:=\left(\int_{\mathbb{R}^n} \int_{\mathbb{R}^n} f(v)^{p} \mathcal{P}(\tau_{v}\mathcal{R}\psi,1)(u)^{r'}\du\,\dv\right)^{\tfrac{1}{q'}}\\
&I_3:=\left(\int_{\mathbb{R}^n}\int_{\mathbb{R}^n}g(v-u)^{q}\mathcal{P}(\tau_{v}\mathcal{R}\psi,1)(u)^{r'}\du\,\dv\right)^{\tfrac{1}{p'}}\\&\ \ \ \ \ \ \ \ \ \ \ \ \ \ \ \ \ \ \ \ \ \ \ \ \ \ \ \ =\left(\int_{\mathbb{R}^n}\int_{\mathbb{R}^n}g(v)^{q}\mathcal{P}(1,\tau_{-v}\psi)(u)^{r'}\du\,\dv\right)^{\tfrac{1}{p'}}.
\end{split}
\end{align}
Recall that $\tau$ and $\mathcal{R}$ are unitary operators in the $L^{p}$ spaces, thus, from (\ref{Holder1}) and Theorem \ref{T3} we obtain
\begin{equation*}
I\leq C \,\|f\|_{L^{p}(\mathbb{R}^{n})} \, \|g\|_{L^q(\mathbb{R}^{n})}\, \|\psi\|_{L^{r'}(\mathbb{R}^{n})},
\end{equation*}
with constant given by
\begin{align}\label{e17}
\begin{split}
C=\left|S^{n-2}\right|&\left(2^{\tfrac{n}{r'}}\int^{1}_{-1}\left(\tfrac{1-s}{2}\right)^{-\tfrac{n}{2r'}}\dxi(s)
\right)^{\tfrac{r'}{q'}}\\
&\left(\int^{1}_{-1}\left[\left(\tfrac{1+s}{2}\right)+(1-\beta_{0})^{2}\left(\tfrac{1-s}{2}\right)\right]^{-\tfrac{n}{2r'}}\dxi(s)\right)^{\tfrac{r'}{p'}},
\end{split}
\end{align}
which concludes the proof. The cases we left over are easier. Indeed, for $(p,q,r) = (1,1,1)$, it is a matter of pulling the $L^{\infty}$-norm of $\mathcal{P}(\tau_{v}\mathcal{R}\psi,1)(u)$ out of the integral (\ref{Young1.1}) using Theorem \ref{T3} with $(\infty,\infty,\infty)$, thus arriving at the constant
\begin{equation*}
 C = \left|S^{n-2}\right|\int^{1}_{-1}\dxi(s)\,.
\end{equation*}
In the case $(p,q,r) = (\infty, 1, \infty)$, it is just a matter of pulling out the $L^{\infty}$-norm of $f$ out of the integral (\ref{Young1.1}), and repeating the process as in integral $I_3$ above. The case $(p,q,r) = (1, \infty, \infty)$ is basically the same. In both these cases, the final constant can be read from (\ref{e17}). Note that one would obtain the same constants starting with the easy cases $(p,q,r) = (1,1,1), (1, \infty, \infty), (\infty,1,\infty)$ and using Riesz-Thorin interpolation, but we chose to do this directly.

In the case where $\alpha + \lambda >0$ and $\lambda>0$, we use two additional inequalities in order to control the post collisional local energy and powers of the post collisional velocity and of the the relative velocity by powers of the integration variables appearing on each $I_i, i=1,2,3$ in \eqref{add0}.  Indeed, from the energy dissipation we have
\begin{equation*}
|v'|^{2}+|v'_{*}|^{2}\leq |v|^{2}+|v_{*}|^{2}\,,
\end{equation*}
and thus
\begin{equation}\label{ine1}
|v-u^{-}|^{\alpha} = |v'|^{\alpha}\leq\left(|v|^{2}+|v_{*}|^{2}\right)^{\alpha/2} \leq 2^{\alpha/2}\left(|v|^{\alpha}+|v-u|^{\alpha}\right).
\end{equation}
Also,
\begin{equation}\label{ine2}
|u|^{\lambda}\leq \left(|v-u|+|v|\right)^{\lambda}\leq 2^{\lambda}\left(|v-u|^{\lambda}+|v|^{\lambda}\right).
\end{equation}
Now, take as a test function $\psi_{\alpha}(v)=\psi(v)|v|^{\alpha}$. The first step  is to use (\ref{e3}) and (\ref{ine2}) to obtain
\begin{align}\label{add1}
\begin{split}
\int_{\mathbb{R}^{n}}Q^{+}(f,g)&(v) \psi_{\alpha}(v)\,\dv =
 \int_{\mathbb{R}^{n}}\int_{\mathbb{R}^{n}}f(v)g(v-u)\mathcal{P}(\tau_{v}\mathcal{R}\psi_{\alpha},1)(u)\ |u|^{\lambda}\,\du\,\dv\\
&\leq 2^{\lambda}\Bigl\{\int_{\mathbb{R}^{n}}\int_{\mathbb{R}^{n}}f(v)g(v-u)\mathcal{P}(\tau_{v}\mathcal{R}\psi_{\alpha},1)(u)\ |v|^{\lambda}\,\du\,\dv\\
&  \ \ \ \ \ \ + \int_{\mathbb{R}^{n}}\int_{\mathbb{R}^{n}}f(v)g(v-u)\mathcal{P}(\tau_{v}\mathcal{R}\psi_{\alpha},1)(u)\ |v-u|^{\lambda}\,\du\,\dv \Bigr\}\,.
\end{split}
\end{align}
Making use of (\ref{P}) and (\ref{ine1}) yields
\begin{align}\label{add2}
\begin{split}
\mathcal{P}(\tau_{v}\mathcal{R}&\psi_{\alpha},1)(u) = \int_{S^{n-1}} \psi(v - u^{-}) |v-u^{-}|^{\alpha}b(\hat{u}\cdot\omega)\,\dom\,\\
& \leq 2^{\alpha/2} \Bigl\{\int_{S^{n-1}} \psi(v - u^{-}) |v|^{\alpha}b(\hat{u}\cdot\omega)\dom  \\
& \ \ \ \ \ \ \ \ \ \ \ \ \ \ \ \ \ \ + \int_{S^{n-1}} \psi(v - u^{-}) |v-u|^{\alpha}b(\hat{u}\cdot\omega)\dom\Bigr\}\,.
\end{split}
\end{align}
Recalling the notation $\psi_{\alpha}(v)=\psi(v)|v|^{\alpha}$, combining  (\ref{add1}) and (\ref{add2}) one obtains
\begin{align}\label{add3}
\begin{split}
\int_{\mathbb{R}^{n}}Q^{+}(f,g)&(v) \psi_{\alpha}(v)\,\dv \\
& \leq 2^{\lambda + \tfrac{\alpha}{2}}\Bigl\{\int_{\mathbb{R}^{n}}\int_{\mathbb{R}^{n}}f_{\lambda+\alpha}(v)g(v-u)\mathcal{P}(\tau_{v}\mathcal{R}\psi,1)(u)\,\du\,\dv\\
&  \ \ \ \ + \int_{\mathbb{R}^{n}}\int_{\mathbb{R}^{n}}f_{\lambda}(v)g_{\alpha}(v-u)\mathcal{P}(\tau_{v}\mathcal{R}\psi,1)(u)\,\du\,\dv\\
&  \ \ \ \ + \int_{\mathbb{R}^{n}}\int_{\mathbb{R}^{n}}f_{\alpha}(v)g_{\lambda}(v-u)\mathcal{P}(\tau_{v}\mathcal{R}\psi,1)(u)\,\du\,\dv\\
&  \ \ \ \ + \int_{\mathbb{R}^{n}}\int_{\mathbb{R}^{n}}f(v)g_{\lambda + \alpha}(v-u)\mathcal{P}(\tau_{v}\mathcal{R}\psi,1)(u)\,\du\,\dv\Bigr\}\,.
\end{split}
\end{align}
We now repeat the procedure for the case $\alpha = \lambda = 0$, breaking each of the 4 integrals of (\ref{add3}) into 3 parts according to (\ref{add0}). One should then estimate each piece using Theorem \ref{T3} and simple inequalities of the form
\begin{equation*}
\|f\|_{L^p(\R^n)} \leq \|f\|_{L^p_{\alpha+ \lambda}(\R^n)} \ \ \textrm{and} \ \ \|f_{\alpha}\|_{L^p(\R^n)} \leq \|f\|_{L^p_{\alpha+ \lambda}(\R^n)} \,.
\end{equation*}
At the end we arrive at
\begin{equation*}
\int_{\mathbb{R}^{n}}Q^{+}(f,g)(v) \psi_{\alpha}(v)\,\dv  \leq \, 2^{\lambda + 2 + \tfrac{\alpha}{2}}\,C\, \|f\|_{L^{p}_{\alpha+\lambda}(\mathbb{R}^{n})} \, \|g\|_{L^q_{\alpha+\lambda}(\mathbb{R}^{n})}\, \|\psi\|_{L^{r'}(\mathbb{R}^{n})}.
\end{equation*}
This proves that
\begin{equation*}
\left\|Q^{+}(f,g)(v)|v|^{\alpha}\right\|_{L^r(\R^n)} \leq   \,\, 2^{\lambda + 2+ \tfrac{\alpha}{2}}C\, \|f\|_{L^{p}_{\alpha+\lambda}(\mathbb{R}^{n})} \, \|g\|_{L^q_{\alpha+\lambda}(\mathbb{R}^{n})}.
\end{equation*}
A similar reasoning provides
\begin{equation*}
\left\|Q^{+}(f,g)(v)\right\|_{L^r(\R^n)} \leq   \, 2^{\lambda+1}\, C\, \|f\|_{L^{p}_{\alpha+\lambda}(\mathbb{R}^{n})} \, \|g\|_{L^q_{\alpha+\lambda}(\mathbb{R}^{n})}\,,
\end{equation*}
and finally
\begin{equation}\label{Youngfinal}
\left\|Q^{+}(f,g)(v)\right\|_{L^r_{\alpha}(\R^n)} \leq 2^{\lambda + 2+ \tfrac{\alpha}{2}+ \tfrac{1}{r}}\, C\, \|f\|_{L^{p}_{\alpha+\lambda}(\mathbb{R}^{n})} \, \|g\|_{L^q_{\alpha+\lambda}(\mathbb{R}^{n})}\,,
\end{equation}
with $C$ given in (\ref{e17}). The proof of Theorem 1 is now completed.

%%%%%%%%%%%%%%%%%%%%%%%%%%%%%%%%%%%%%%%%%%%%%%%%Section4%%%%%%%%%%%%%%%%%%%%%%%%%%%%%%%%%%%%%%%%%%%%%%%%%%%%%%%%%%%%%%%%%%%%%%%%%%%%%%%%%%%%%%%%%%%%%%%%%%%%%%%%%%%%%%%%%%%%%%%%%%%%%%%%%%%%%%%%%%%%%%%%%%%%%%%%%%%%%%%%%%%%%%%%%%%%%%%%%%%%%%%%

\section{Hardy-Littlewood-Sobolev Type Inequality for Soft Potentials}

In this section we study the collision operator for soft potentials and prove Theorem \ref{HLS}. We divide this proof in three parts, making use of the convolution symmetry of $Q^=(f,g)$.

\subsection{The case $r < q$} From (\ref{e3}) we have
\begin{align}
I:=\int_{\mathbb{R}^{n}}Q^{+}(f,g)(v)\,&\psi(v)\,\dv=\int_{\mathbb{R}^{n}}\int_{\mathbb{R}^{n}}f(v)g(v-u)\mathcal{P}(\tau_{v}\mathcal{R}\psi,1)(u)\, |u|^{\lambda}\,\du\,\dv \nonumber\\
&=\int_{\mathbb{R}^{n}}f(v)\left(\int_{\mathbb{R}^{n}}\tau_{v}\mathcal{R}g(u)\,\mathcal{P}(\tau_{v}\mathcal{R}\psi,1)(u)\ |u|^{\lambda}\du\right)\dv. \label{e19}
\end{align}
Applying H\"{o}lder's inequality and then Theorem \ref{T3} to the inner integral of (\ref{e19}), with $(p,q,r)=(a,\infty,a)$,   we obtain
\begin{align*}
\int_{\mathbb{R}^{n}}\tau_{v}\mathcal{R}g(u)\,\mathcal{P}(\tau_{v}\mathcal{R}\psi,& 1)(u)\ |u|^{\lambda}\du  \leq \ \left\|\mathcal{P}(\tau_{v}\mathcal{R}\psi,1)\right\|_{L^{a}(\mathbb{R}^{n},\,\dnull)}\left\|\tau_{v}\mathcal{R}g\right\|_{L^{a'}(\mathbb{R}^{n},\, \dnull)}\\
&\leq  C_1 \left\|\tau_{v}\mathcal{R}\psi\right\|_{L^{a}(\mathbb{R}^{n},\,\dnull)}\left\|\tau_{v}\mathcal{R}g\right\|_{L^{a'}(\mathbb{R}^{n},\, \dnull)}\\
&=C_1\Bigl[\bigl(|\psi|^{a}\ast|u|^{\lambda}\bigr)(v)\Bigr]^{1/a}\, \Bigl[\bigl(|g|^{a'}\ast|u|^{\lambda}\bigr)(v)\Bigr]^{1/a'}\, ,
\end{align*}
where $1/a + 1/a' = 1$ ($a$ to be chosen later), and the constant $C_1$ given by
\begin{equation}\label{constC1}
C_1 = \left|S^{n-2}\right|\, 2^{\tfrac{n+\lambda}{a}}\int^{1}_{-1}\left(\tfrac{1-s}{2}\right)^{-\tfrac{n+\lambda}{2a}}\,\dxi(s)\,.
\end{equation}
Therefore we arrive at
\begin{equation}\label{preHolder}
I \leq C_1 \int_{\mathbb{R}^{n}}f(v)\,\Bigl[\bigl(|\psi|^{a}\ast|u|^{\lambda}\bigr)(v)\Bigr]^{1/a}\, \Bigl[\bigl(|g|^{a'}\ast|u|^{\lambda}\bigr)(v)\Bigr]^{1/a'}\dv.
\end{equation}
Applying H\"{o}lder's inequality in (\ref{preHolder}) with exponents $1/p + 1/b + 1/c = 1$ ($b$ and $c$ to be chosen later) we arrive at
\begin{equation}\label{Holder4.3}
I \leq C_1\ \|f\|_{L^p(\R^n)}\ \left\||\psi|^{a}\ast|u|^{\lambda}\right\|_{L^{b/a}(\R^n)}^{1/a} \ \left\||g|^{a'}\ast|u|^{\lambda}\right\|_{L^{c/a'}(\R^n)}^{1/a'}
\end{equation}
We now use the classical Hardy-Littlewood-Sobolev inequality to obtain
\begin{equation}\label{HLS1}
\left\||\psi|^{a}\ast|u|^{\lambda}\right\|_{L^{b/a}(\R^n)} \leq C_2 \ \|\psi\|_{L^{ad}(\R^n)}^{a}
\end{equation}
and
\begin{equation}\label{HLS2}
\left\||g|^{a'}\ast|u|^{\lambda}\right\|_{L^{c/a'}(\R^n)} \leq C_3 \ \| g\|_{L^{a'e}(\R^n)}^{a'}\,,
\end{equation}
where
\begin{equation*}
1 +\frac{a}{b} = \frac{1}{d} - \frac{\lambda}{n} \ \ \ \textrm{and} \ \ \  1 +\frac{a'}{c} = \frac{1}{e} - \frac{\lambda}{n}\,.
\end{equation*}
The constants $C_2$ and $C_3$ (generally not sharp) are explicit in \cite[p. 106]{LiLo}. Finally putting together (\ref{HLS1}) and (\ref{HLS2}) with (\ref{Holder4.3}) we arrive at
\begin{equation}\label{FinalHLS}
I \leq C_1\, C_2^{1/a}\, C_3^{1/a'} \|f\|_{L^p(\R^n)}\,\| g\|_{L^{a'e}(\R^n)}\,\|\psi\|_{L^{ad}(\R^n)}.
\end{equation}
To conclude the proof of the theorem it would suffice to have in (\ref{FinalHLS}) the relations $a'e = q$ and $ad = r'$. Now it comes the moment to choose our variables. All the inequalities we used above will be well-posed if the following relations are satisfied
\begin{equation}\label{system-a}
\left\{
\begin{array}{ccc}
\dfrac{1}{a} + \dfrac{1}{a'} = 1, & 1 \leq a \leq \infty& \\
&&\\
\dfrac{1}{p} + \dfrac{1}{b} + \dfrac{1}{c} = 1, & 1 < b,c < \infty &\\
&&\\
1 + \dfrac{a}{b} = \dfrac{1}{d} - \dfrac{\lambda}{n}, & b>a, & 1<d<\infty\\
&&\\
1 + \dfrac{a'}{c} = \dfrac{1}{e} - \dfrac{\lambda}{n}, & c>a', & 1<e<\infty\\
&&\\
a'e = q &&\\
&&\\
ad = r'&&
\end{array}
\right.
\end{equation}
The last two equations determine $d$ and $e$ in terms of $a$. The remaining linear system (in the variables $1/a$, $1/a'$, $1/b$ and $1/c$) in undetermined because of the original relation
\begin{equation*}
 \frac{1}{p} + \frac{1}{q} = 1 + \frac{\lambda}{n} + \frac{1}{r}.
\end{equation*}
One can check that the choice
\begin{equation*}
\frac{1}{b} = \frac{1}{r'} - \frac{1}{a} \left( 1 + \frac{\lambda}{n}\right)
\end{equation*}
and
\begin{equation*}
\frac{1}{c} = \frac{1}{q} - \frac{1}{a'} \left( 1 + \frac{\lambda}{n}\right)
\end{equation*}
with any $1/a$ in the non-empty interval
\begin{equation}\label{goodinterval}
\textrm{max} \left\{ \frac{1}{r'}\,,\, 1 - \frac{1}{q( 1 + \tfrac{\lambda}{n})}\right\} < \frac{1}{a} < \textrm{min} \left\{ \frac{1}{r'(1 + \tfrac{\lambda}{n})}\,,\, \frac{1}{q'}\right\}
\end{equation}
provides a solution for \eqref{system-a}. If we define
\begin{equation}\label{boundb1*}
D_1 = C_1\, C_2^{1/a}\, C_3^{1/a'}
\end{equation}
using expressions (\ref{constC1}), (\ref{HLS1}), (\ref{HLS2}) and a choice of $a$ given by (\ref{goodinterval}), expression (\ref{FinalHLS}) is plainly equivalent to
\begin{equation}\label{boundb1}
\left\| Q^{+}(f,g)\right\|_{L^r(\mathbb{R}^{n})} \leq D_1 \,\|f\|_{L^{p}(\mathbb{R}^{n})} \, \|g\|_{L^q(\mathbb{R}^{n})},
\end{equation}
finishing the proof in this case.

\subsection{The case $r<p$} Here we make use of the convolution symmetry of $Q^{+}$ given by (\ref{e3}),
\begin{align}
I:=\int_{\mathbb{R}^{n}}Q^{+}(f,g)(v)\,&\psi(v)\,\dv=\int_{\mathbb{R}^{n}}\int_{\mathbb{R}^{n}}f(u+v)g(v)\mathcal{P}(1,\tau_{-v}\psi)(u)\, |u|^{\lambda}\,\du\,\dv \nonumber\\
&=\int_{\mathbb{R}^{n}}g(v)\left(\int_{\mathbb{R}^{n}}\tau_{-v}f(u)\,\mathcal{P}(1, \tau_{-v}\psi)(u)\ |u|^{\lambda}\du\right)\dv. \label{e19*}
\end{align}
We now proceed exactly as in the proof of the first case, noticing that the roles of $f$ and $g$ (and thus of $p$ and $q$) are interchanged. We will find
\begin{equation*}\label{FinalHLS*}
I \leq C_4\, C_5^{1/a}\, C_6^{1/a'} \|g\|_{L^q(\R^n)}\,\| f\|_{L^{a'e}(\R^n)}\,\|\psi\|_{L^{ad}(\R^n)}.
\end{equation*}
where $C_4$ (the analogous of $C_1$) is given by Theorem \ref{T3} as
\begin{equation*}
C_4=\left|S^{n-2}\right|\,\int^{1}_{-1}\left[\left(\tfrac{1+s}{2}\right)+(1-\beta_0)^{2}\left(\tfrac{1-s}{2}\right)\right]^{-\tfrac{n+\lambda}{2a}}\dxi(s)\,,
\end{equation*}
while $C_5$ and $C_6$ (the analogues of $C_2$ and $C_3$) are given by classical HLS inequalities as in (\ref{HLS1}) and (\ref{HLS2}), for a choice of $a$ now in the non-empty interval

\begin{equation}\label{goodinterval2}
\textrm{max} \left\{ \frac{1}{r'}\,,\, 1 - \frac{1}{p( 1 + \tfrac{\lambda}{n})}\right\} < \frac{1}{a} < \textrm{min} \left\{ \frac{1}{r'(1 + \tfrac{\lambda}{n})}\,,\, \frac{1}{p'}\right\}.
\end{equation}
In the end, defining
\begin{equation}\label{boundb2*}
D_2 = C_4\, C_5^{1/a}\, C_6^{1/a'}
\end{equation}
we will have
\begin{equation}\label{boundb2}
\left\| Q^{+}(f,g)\right\|_{L^r(\mathbb{R}^{n})} \leq D_2 \,\|f\|_{L^{p}(\mathbb{R}^{n})} \, \|g\|_{L^q(\mathbb{R}^{n})}.
\end{equation}

\subsection{The case $r \geq \max\{p,q\}$} The remaining range will be covered by the multilinear Riesz-Thorin interpolation \cite[Chapter 12, Theorem 3.3]{Z}. Recall that the triple $(p,q,r)$ satisfies $1 < p,q,r < \infty$ and
\begin{equation}\label{cond1aa}
 \frac{1}{p} + \frac{1}{q} = \left(1 + \frac{\lambda}{n}\right) + \frac{1}{r}.
\end{equation}
For fixed $\lambda$ and $n$, with $-n < \lambda <0$, there exist triples $(p_1, q_1, r)$, with $r<q_1$, and $(p_2,q_2,r)$, with $r<p_2$, such that:
\begin{equation}\label{cond2aa}
 \frac{1}{p_1} + \frac{1}{q_1} = \left(1 + \frac{\lambda}{n}\right) + \frac{1}{r} =  \frac{1}{p_2} + \frac{1}{q_2}.
\end{equation}
Therefore, for the triple $(p_1, q_1, r)$ we have bound (\ref{boundb1}) and for the triple $(p_2, q_2, r)$ we have bound (\ref{boundb2}). From (\ref{cond1aa}) and (\ref{cond2aa}) there is a $t$, with $0<t<1$, such that
\begin{equation*}
t\left(\frac{1}{p_1}, \frac{1}{q_1}, \frac{1}{r}\right) + (1-t)\left(\frac{1}{p_2}, \frac{1}{q_2}, \frac{1}{r}\right) = \left(\frac{1}{p}, \frac{1}{q}, \frac{1}{r}\right),
\end{equation*}
and we can apply the bilinear version of the Riesz-Thorin interpolation to get
\begin{equation}\label{boundb3}
\left\| Q^{+}(f,g)\right\|_{L^r(\mathbb{R}^{n})} \leq D_1^t D_2^{(1-t)} \,\|f\|_{L^{p}(\mathbb{R}^{n})} \, \|g\|_{L^q(\mathbb{R}^{n})}.
\end{equation}
This concludes the proof.

 %%%%%%%%%%%%%%%%%%%%%%%%%%%%%%%%%%%%%%%%%%Section5%%%%%%%%%%%%%%%%%%%%%%%%%%%%%%%%%%%%%%%%%%%%%%%%%%%%%%%%%%%%%%%%%%%%%%%%%%%%%%%%%%%%%%%%%%%%%%%%%%%%%%%%%%%%%%%%%%%%%%%%%%%%%%%%%%%%%%%%%%%%%%%%%%%%%%%%%%%%%%%%%%%%%%%%%%%%%%%%%%%%%%%%%%%%%%%%%%%%%%%%%
\section{Some immediate applications}
The radial symmetrization technique and the key ideas in the proofs of the basic inequalities for Boltzmann (Young's and HLS) described here have a deeper range and may be applied to other important estimates in kinetic theory. In this section we discuss two additional examples.
\subsection{Non-increasing Potentials}
Assume in the following discussion that the collision kernel has the form
\begin{equation}\label{dck}
B(|u|,\hat{u}\cdot\omega)=\Phi(u)\, b(\hat{u}\cdot\omega)\,,
\end{equation}
for some radial non-increasing potential $\Phi:\mathbb{R}^{n}\rightarrow\mathbb{R}^{+}$.  Let us retake the discussion of Section 3 by defining the measure $\nu_\Phi$ on $\R^n$ by
\begin{equation*}
\dnup=\Phi(x)\dx\,,
\end{equation*}
and the measure $\sigma^{\Phi}_{n}$ on $\R^{+}$ by
\begin{equation*}
\dsip(t)=\tilde{\Phi}(t)\;t^{n-1}\dt\,.
\end{equation*}
We claim that under this convention, Lemma \ref{L1} and Theorem \ref{T3} are still valid with these measures replacing $\dnua$ and $\dsig$ in the statement of these results. This follows from a simple observation in the proof of Lemma \ref{L1} when one performs the change of variables
\begin{equation*}
y=a_1(x,s) = \beta \,x \,\bigl(\tfrac{1-s}{2}\bigr)^{1/2}
\end{equation*}
Indeed, note that since $\Phi$ is non-increasing and $0\leq\beta\left(\tfrac{1-s}{2}\right)^{1/2}\leq1$, then
\begin{equation*}
\tilde{\Phi}(x)\leq\tilde{\Phi}(y).
\end{equation*}
Hence, using that $\beta \geq 1/2$, the estimate (\ref{eap1}) changes in this context to
\begin{equation*}
\left(\int^{\infty}_{0}\left|f(a_1(x,s))\right|^{p}\dsip(x)\right)^{\tfrac{1}{p}}\leq2^{\tfrac{n}{p}}\left(\tfrac{1-s}{2}\right)^{-\tfrac{n}{2p}}\left\|f\right\|_{L^p(\mathbb{R}^{+},\, \dsip)}.
\end{equation*}
Same observation is valid for the change of variables $y=a_2(x,s)$, in the sense that
\begin{equation*}
\left(\int^{\infty}_{0}\left|g(a_2(x,s))\right|^{q}\dsip(x)\right)^{\tfrac{1}{q}}\leq\left[\left(\tfrac{1+s}{2}\right)+(1-\beta_0)^{2}\left(\tfrac{1-s}{2}\right)\right]^{-\tfrac{n}{2q}}\left\|g\right\|_{L^q(\mathbb{R}^{+},\, \dsig)},
\end{equation*}
where $\beta_{0}=\beta(0)$.
We conclude that, for a radial non-increasing potential, Lemma \ref{L1} and Theorem \ref{T3} hold with the (non-sharp) constant
\begin{equation*}
C(n,p,q,b, \beta)=2^{-\tfrac{n}{p}}\int^{1}_{-1}\left(\tfrac{1-s}{2}\right)^{-\tfrac{n}{2p}}\left[\left(\tfrac{1+s}{2}\right)+(1-\beta_0)^{2}\left(\tfrac{1-s}{2}\right)\right]^{-\tfrac{n}{2q}}\dxi(s).
\end{equation*}

An immediate consequence of this observation is that Theorem \ref{HLS} holds for any collision kernel (\ref{dck}) with a potential that is radial non-increasing and belongs to a weak Lebesgue space $\Phi\in L^{s}_{weak}(\mathbb{R}^{n})$. A careful reading on the proof of this theorem together with the discussion above leads to:
\begin{corollary}[weakly integrable potential for the gain]\label{C7}
Let $1 < p,q,r,s <\infty$ with $1/p+1/q+1/s=1+1/r$. Assume that $\Phi\in L^{s}_{weak}(\mathbb{R}^{n})$ is a radial non-increasing function.  For the collision kernel {\rm(\ref{dck})} the bilinear operator $Q^{+}$ extends to a bounded operator from $L^{p}(\mathbb{R}^{n})\times L^{q}(\mathbb{R}^{n})\rightarrow L^{r}(\mathbb{R}^{n})$ via the estimate
\begin{equation*}
\left\|Q^{+}(f,g)\right\|_{L^{r}(\mathbb{R}^{n})}\leq C\left\|\Phi\right\|_{L^{s}_{weak}(\mathbb{R}^{n})}\left\|f\right\|_{L^{p}(\mathbb{R}^{n})}\left\|g\right\|_{L^{q}(\mathbb{R}^{n})},
\end{equation*}
with $C$ as in the proof of Theorem \ref{HLS}.
\end{corollary}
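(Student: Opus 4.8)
The plan is to rerun the proof of Theorem~\ref{HLS} with $|u|^{\lambda}$ replaced throughout by $\Phi(u)$, and with the classical Hardy--Littlewood--Sobolev inequality replaced by Young's inequality allowing a weak-$L^{s}$ factor. The first ingredient is exactly the observation made just above this corollary: since $\Phi$ is radial non-increasing and $0\leq\beta\bigl(\tfrac{1-s}{2}\bigr)^{1/2}\leq 1$, one has $\widetilde{\Phi}(x)\leq\widetilde{\Phi}(y)$ along each of the changes of variables $y=a_1(x,s)$ and $y=a_2(x,s)$, so that Lemma~\ref{L1} and Theorem~\ref{T3} hold with $\dnua$, $\dsig$ replaced by $\dnup$, $\dsip$, with the explicit (non-sharp) constant $C(n,p,q,b,\beta)$ displayed above. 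In particular one has $\|\mathcal{P}(h,1)\|_{L^{a}(\R^n,\,\dnup)}\leq C_1'\,\|h\|_{L^{a}(\R^n,\,\dnup)}$ with $C_1'$ depending only on $n,a,b,\beta$, and since $\dnup$ is translation invariant, $\|\tau_{v}\mathcal{R}h\|_{L^{a}(\R^n,\,\dnup)}^{a}=(|h|^{a}\ast\Phi)(v)$.

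Next, in the case $r<q$ I would start from the first line of \eqref{e3}, rewritten as in \eqref{e19} with $|u|^{\lambda}$ replaced by $\Phi(u)$, apply Hölder in $u$ and then the $\Phi$-weighted Theorem~\ref{T3} with exponents $(a,\infty,a)$ to the inner integral, arriving exactly as in \eqref{preHolder} at
\[
I\ \leq\ C_1'\int_{\R^n}f(v)\,\bigl[(|\psi|^{a}\ast\Phi)(v)\bigr]^{1/a}\,\bigl[(|g|^{a'}\ast\Phi)(v)\bigr]^{1/a'}\,\dv .
\]
A second application of Hölder in $v$ with exponents $1/p+1/b+1/c=1$ leaves the two convolution norms $\|\,|\psi|^{a}\ast\Phi\,\|_{L^{b/a}(\R^n)}^{1/a}$ and $\|\,|g|^{a'}\ast\Phi\,\|_{L^{c/a'}(\R^n)}^{1/a'}$. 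At this stage, instead of classical HLS I would invoke Young's inequality with a weak factor (see \cite{LiLo}): $\|\,|\psi|^{a}\ast\Phi\,\|_{L^{b/a}(\R^n)}\leq C_2\,\|\Phi\|_{L^{s}_{weak}(\R^n)}\,\|\psi\|_{L^{ad}(\R^n)}^{a}$ whenever $1+\tfrac{a}{b}=\tfrac1d+\tfrac1s$, and likewise for $g$ with $1+\tfrac{a'}{c}=\tfrac1e+\tfrac1s$. Multiplying everything out gives $I\leq D_1\,\|\Phi\|_{L^{s}_{weak}}\,\|f\|_{L^{p}}\,\|g\|_{L^{a'e}}\,\|\psi\|_{L^{ad}}$ with $D_1$ formed from $C_1'$, $C_2$, $C_3$ as in \eqref{boundb1*}; note that $\|\Phi\|_{L^{s}_{weak}}$ enters with total exponent $\tfrac1a+\tfrac1{a'}=1$, which is why it appears linearly in the final bound.

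The remaining bookkeeping is the linear system \eqref{system-a} with $-\lambda/n$ replaced systematically by $1/s$: the two forced equations $a'e=q$ and $ad=r'$ fix $d$ and $e$ in terms of $a$, the underdetermined part is closed by the hypothesis $1/p+1/q+1/s=1+1/r$, and the choice $1/b=1/r'-\tfrac1a(1+\tfrac1s)$, $1/c=1/q-\tfrac1{a'}(1+\tfrac1s)$ works for every $1/a$ in the analogue of the interval \eqref{goodinterval} obtained by replacing $\tfrac{\lambda}{n}$ by $-\tfrac1s$; here the conditions $1<s<\infty$ and $1<p,q,r<\infty$ play exactly the role that $-n<\lambda<0$ played in Theorem~\ref{HLS}. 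The case $r<p$ is handled identically from the second line of \eqref{e3} (the convolution symmetry), interchanging $f$ with $g$ and using the other $\Phi$-weighted constant $C_4$ from Theorem~\ref{T3}, yielding a bound $D_2\|\Phi\|_{L^{s}_{weak}}\|f\|_{L^p}\|g\|_{L^q}$; and the range $r\geq\max\{p,q\}$ follows, as in the last case of the proof of Theorem~\ref{HLS}, from bilinear Riesz--Thorin interpolation between the two endpoint triples, the factor $\|\Phi\|_{L^{s}_{weak}}$ surviving with exponent $t+(1-t)=1$. I do not expect a genuine obstacle; the only point needing care is the non-emptiness of the admissible $1/a$-interval, and since $1/s\in(0,1)$ behaves exactly as $-\lambda/n\in(0,1)$ did, this is immediate.
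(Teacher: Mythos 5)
Your proposal is correct and follows essentially the same route as the paper: the $\Phi$-weighted versions of Lemma \ref{L1} and Theorem \ref{T3} obtained from the monotonicity of $\Phi$ along $y=a_1(x,s)$ and $y=a_2(x,s)$, then the three-case scheme of Theorem \ref{HLS} with the classical HLS step replaced by the weak-type Young inequality (so that $\|\Phi\|_{L^{s}_{weak}}$ enters with total exponent $1/a+1/a'=1$), the symmetric case $r<p$ via the second representation in \eqref{e3}, and bilinear Riesz--Thorin for $r\geq\max\{p,q\}$. One cosmetic point: the identity $\|\tau_{v}\mathcal{R}h\|_{L^{a}(\R^n,\,\Phi(x)\dx)}^{a}=(|h|^{a}\ast\Phi)(v)$ holds simply by the definition of convolution, not because the measure $\Phi(x)\dx$ is translation invariant (it is not).
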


An interesting fact, that might be suitable for applications related to soft potentials, is that if we impose the stronger condition that our radial non-increasing potential belongs indeed to $L^s(\R^n)$, then we can include the endpoints in the range of exponents for our inequality. To see this, it is just a matter of following the proof of Theorem \ref{HLS} and use the classical Young's inequality for convolution whenever the classical HLS inequality was used (passages (\ref{HLS1}) and (\ref{HLS2})). One should obtain:

\begin{corollary}[integrable potential for the gain]\label{C8}
Let $1\leq p,q,r,s\leq\infty$ with $1/p+1/q+1/s=1+1/r$. Assume that $\Phi\in L^{s}(\mathbb{R}^{n})$ is a radial non-increasing function.  For the collision kernel {\rm (\ref{dck})} the bilinear operator $Q^{+}$ extends to a bounded operator from $L^{p}(\mathbb{R}^{n})\times L^{q}(\mathbb{R}^{n})\rightarrow L^{r}(\mathbb{R}^{n})$ via the estimate
\begin{equation*}
\left\|Q^{+}(f,g)\right\|_{L^{r}(\mathbb{R}^{n})}\leq C\left\|\Phi\right\|_{L^{s}(\mathbb{R}^{n})}\left\|f\right\|_{L^{p}(\mathbb{R}^{n})}\left\|g\right\|_{L^{q}(\mathbb{R}^{n})},
\end{equation*}
with $C$ as in the proof of Theorem \ref{HLS}.
\end{corollary}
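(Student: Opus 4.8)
The plan is to mimic the proof of Theorem~\ref{HLS} essentially line by line, with two substitutions: replace the homogeneous kernel $|u|^{\lambda}$ by the potential $\Phi(u)$, and replace every appeal to the classical Hardy--Littlewood--Sobolev inequality by the classical Young convolution inequality. The one structural input is the observation recorded just before the statement: since $\Phi$ is radial and non-increasing and the change of variables $y=a_i(x,s)$ satisfies $0\le a_i(x,s)/x\le 1$, one has $\tilde\Phi(x)\le\tilde\Phi(a_i(x,s))$, so Lemma~\ref{L1} and Theorem~\ref{T3} remain valid with $\dnua,\dsig$ replaced by $\dnup,\dsip$ and with the explicit non-sharp constant displayed there. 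This is the only place the monotonicity of $\Phi$ is used.

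For the case $r<q$ I would argue as in Section~5.1. Using \eqref{e3} with $\Phi$ in place of $|u|^{\lambda}$,
\[
I:=\int_{\R^{n}}Q^{+}(f,g)(v)\,\psi(v)\,\dv=\int_{\R^{n}}f(v)\left(\int_{\R^{n}}\tau_{v}\mathcal{R}g(u)\,\mathcal{P}(\tau_{v}\mathcal{R}\psi,1)(u)\,\Phi(u)\,\du\right)\dv.
\]
Applying H\"older's inequality and then the modified Theorem~\ref{T3} with measure $\dnup$ and exponents $(a,\infty,a)$ to the inner integral, and using that $\Phi$ is even, one obtains, with $1/a+1/a'=1$ and $C_{1}$ the analogue of \eqref{constC1},
\[
I\le C_{1}\int_{\R^{n}}f(v)\,\bigl[(|\psi|^{a}\ast\Phi)(v)\bigr]^{1/a}\,\bigl[(|g|^{a'}\ast\Phi)(v)\bigr]^{1/a'}\,\dv.
\]
A further H\"older inequality in $v$ with exponents $1/p+1/b+1/c=1$ splits off $\|f\|_{L^{p}(\R^{n})}$ and leaves $L^{b/a}$ and $L^{c/a'}$ norms of $|\psi|^{a}\ast\Phi$ and $|g|^{a'}\ast\Phi$; these I bound by Young's inequality, $\bigl\||\psi|^{a}\ast\Phi\bigr\|_{L^{b/a}}\le\|\psi\|_{L^{ak_{1}}}^{a}\|\Phi\|_{L^{s}}$ with $1/k_{1}+1/s=1+a/b$, and likewise $\bigl\||g|^{a'}\ast\Phi\bigr\|_{L^{c/a'}}\le\|g\|_{L^{a'k_{2}}}^{a'}\|\Phi\|_{L^{s}}$ with $1/k_{2}+1/s=1+a'/c$. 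Since $1/a+1/a'=1$, the two powers of $\|\Phi\|_{L^{s}}$ combine into the single factor claimed in the statement.

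Imposing $ak_{1}=r'$ and $a'k_{2}=q$ converts all these relations into a linear system in $1/a,1/b,1/c$ which, exactly as in the proof of Theorem~\ref{HLS}, is consistent precisely because $1/p+1/q+1/s=1+1/r$ and carries one free parameter $1/a$, ranging over a non-empty interval obtained from \eqref{goodinterval} under the formal substitution $\lambda/n\mapsto-1/s$. Dualizing over $\|\psi\|_{L^{r'}(\R^{n})}\le 1$ gives the bound for $r<q$; the case $r<p$ is identical after interchanging the roles of $f$ and $g$ via the second line of \eqref{e3} as in Section~5.2; and the range $r\ge\max\{p,q\}$ follows by bilinear Riesz--Thorin interpolation between triples $(p_{1},q_{1},r)$ with $r<q_{1}$ and $(p_{2},q_{2},r)$ with $r<p_{2}$ (with $r$ and $s$ held fixed), exactly as in Section~5.3. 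The reason the full closed range $1\le p,q,r,s\le\infty$ is now admissible, rather than the open range of Theorem~\ref{HLS}, is precisely that Young's inequality, unlike HLS, is valid at the endpoints; in particular the degenerate case $s=\infty$ merely pulls $\|\Phi\|_{L^{\infty}}$ out of the kernel and reduces to Theorem~\ref{Young} with $\lambda=\alpha=0$.

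The only genuine work I anticipate is the exponent bookkeeping: verifying that the admissible interval for $1/a$ is non-empty in each configuration, and tracking the handful of degenerate sub-cases where one of $p,q,r,s$ equals $1$ or $\infty$ and several intermediate exponents $b,c,k_{1},k_{2}$ collapse, so that certain H\"older or Young steps become trivial or get replaced by a direct $L^{\infty}$ estimate. No new idea beyond the proof of Theorem~\ref{HLS} is required, and the constant $C$ is then assembled explicitly from $C_{1}$ (via Theorem~\ref{T3}), the Young constants, and the interpolation.
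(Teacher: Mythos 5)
Your proposal matches the paper's own argument: the paper proves Corollary \ref{C8} exactly by noting that the monotonicity of $\Phi$ makes Lemma \ref{L1} and Theorem \ref{T3} valid for the measures $\dnup$, $\dsip$, and then repeating the three-case proof of Theorem \ref{HLS} with the classical Young convolution inequality substituted for HLS in passages (\ref{HLS1}) and (\ref{HLS2}), which is also why the endpoint exponents become admissible. Your exponent bookkeeping (the substitution $\lambda/n\mapsto -1/s$ and the constraints $ak_1=r'$, $a'k_2=q$) is consistent with what the paper leaves implicit, so the proposal is correct and essentially identical in approach.
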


\subsection{Estimates for the loss operator $Q^-(f,g)$}
This discussion leads to the application of the previous results to the loss operator $Q^-(f,g)$ defined in \eqref{defQ-} in strong form.
The following results assert that Theorem \ref{HLS} and its Corollaries \ref{C7} and \ref{C8} are also valid for this operator, however in a restricted range of exponents due to the fact that  this operator lacks the convolution symmetry of $Q^{+}$ given by (\ref{e3}) (see expression (\ref{nosymconv}) in the proof below).
We also show a counterexample that exhibits the lack of full range of exponents property that the gain operator $Q^+$ enjoys.

\begin{corollary}[weakly integrable potential for the loss]\label{C9}
Let $1 < p,q,r,s <\infty$ with $1/p+1/q+1/s=1+1/r$ and $r <p$. Assume that $\Phi\in L^{s}_{weak}(\mathbb{R}^{n})$ is a radial non-increasing function.  For the collision kernel {\rm(\ref{dck})} the bilinear operator $Q^{-}$ extends to a bounded operator from $L^{p}(\mathbb{R}^{n})\times L^{q}(\mathbb{R}^{n})\rightarrow L^{r}(\mathbb{R}^{n})$ via the estimate
\begin{equation*}
\left\|Q^{-}(f,g)\right\|_{L^{r}(\mathbb{R}^{n})}\leq C_5^{1/a}C_6^{1/a'}\left\|b\right\|_{L^{1}(S^{n-1})}  \left\|\Phi\right\|_{L^{s}_{weak}(\R^n)}\left\|f\right\|_{L^{p}(\mathbb{R}^{n})}\left\|g\right\|_{L^{q}(\mathbb{R}^{n})}.
\end{equation*}
with $C_5$, $C_6$ and $a$ as in the proof of Theorem \ref{HLS} (second case).
\end{corollary}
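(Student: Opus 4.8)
The plan is to mimic the proof of Theorem \ref{HLS} in the second case ($r<p$), but starting from the strong form \eqref{defQ-} of $Q^-$ rather than from the weak convolution representation \eqref{e3}. The essential point is that $Q^-(f,g)(v)=\|b\|_{L^1(S^{n-1})}f(v)\,(g\ast\Phi)(v)$ is \emph{local} in its first argument $f$, so the only convolution present is $g\ast\Phi$, and there is no symmetry allowing us to trade $f$ and $g$; this is precisely why we must impose $r<p$ (the analogue of the condition $r<p$ that appears in the second case of the proof of Theorem \ref{HLS}, which there came from the convolution being in the $f$-variable). First I would write, with $C_b:=\|b\|_{L^1(S^{n-1})}$,
\begin{equation}\label{nosymconv}
\|Q^-(f,g)\|_{L^r(\R^n)} = C_b\,\big\|\,f\cdot(g\ast\Phi)\,\big\|_{L^r(\R^n)}.
\end{equation}

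Next I would apply H\"older's inequality to \eqref{nosymconv} with exponents splitting $1/r = 1/p + (1/r-1/p)$; writing $1/c:=1/r-1/p$ (which is positive exactly because $r<p$, and finite because $1/p+1/q+1/s=1+1/r$ forces $1/q+1/s = 1+1/c > 1/c$, keeping $c$ in a legitimate range) this gives
\begin{equation*}
\|f\cdot(g\ast\Phi)\|_{L^r(\R^n)} \leq \|f\|_{L^p(\R^n)}\,\|g\ast\Phi\|_{L^c(\R^n)}.
\end{equation*}
Then I would estimate $\|g\ast\Phi\|_{L^c(\R^n)}$. Since $\Phi\in L^s_{weak}(\R^n)$ and $1/q + 1/s = 1 + 1/c$, the weak Young (i.e.\ the convolution form of Hardy--Littlewood--Sobolev, cf.\ \cite[p.~106]{LiLo}) inequality yields
\begin{equation*}
\|g\ast\Phi\|_{L^c(\R^n)} \leq C\,\|\Phi\|_{L^s_{weak}(\R^n)}\,\|g\|_{L^q(\R^n)},
\end{equation*}
with $C$ a constant of the same nature as the $C_5^{1/a}C_6^{1/a'}$ appearing in the second case of the proof of Theorem \ref{HLS}. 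Combining the three displays gives the claimed estimate, with the constant $C_b\cdot C$. To match the statement's phrasing exactly (constant $C_5^{1/a}C_6^{1/a'}\|b\|_{L^1(S^{n-1})}\|\Phi\|_{L^s_{weak}}$), I would run the argument through the same intermediate exponent $a$ used in the second case of Theorem \ref{HLS}: write $g\ast\Phi = (|g|^{a'}\ast\Phi)^{1/a'}\cdot(\text{weight from }\Phi)$ splitting is not quite right, so more precisely I would first absorb part of $\Phi$ via Theorem \ref{T3} with the $\dnup$ measures (as in Section 6.1), which produces the factor that in Theorem \ref{HLS} was $C_4$ but here — because $\mathcal P$ is replaced by the trivial angular average giving $\|b\|_{L^1(S^{n-1})}$ — collapses to $\|b\|_{L^1(S^{n-1})}$, and then apply classical HLS twice (producing $C_5$ and $C_6$) exactly as in \eqref{HLS1}--\eqref{HLS2}.

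The main obstacle — really the only subtle point — is bookkeeping the exponents so that the range $1/p+1/q+1/s=1+1/r$ together with $r<p$ is exactly what makes every H\"older and HLS step admissible, and so that the intermediate parameter $a$ can be chosen in a non-empty interval analogous to \eqref{goodinterval2}; one must check that the constraint $r<p$ (and not $r<q$) is forced by the locality of $Q^-$ in $f$, which is the content of \eqref{nosymconv}. Everything else is a direct transcription of the second case of the proof of Theorem \ref{HLS}, with the operator $\mathcal P$ and its associated angular-averaging constant simply replaced by the scalar $\|b\|_{L^1(S^{n-1})}$.
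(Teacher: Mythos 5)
Your main argument is correct and is genuinely more direct than the paper's. You work with the strong form \eqref{defQ-}, peel off $\|f\|_{L^{p}(\R^n)}$ by a single H\"older step with $1/c=1/r-1/p$ (this is exactly where $r<p$ enters), and then control $\|g\ast\Phi\|_{L^{c}(\R^n)}$ by one application of the weak Young/HLS convolution inequality, using $1/q+1/s=1+1/c$; since a radial non-increasing $\Phi\in L^{s}_{weak}(\R^n)$ is pointwise dominated by a constant multiple of $\|\Phi\|_{L^{s}_{weak}(\R^n)}\,|x|^{-n/s}$, the classical HLS inequality indeed applies, and all exponents are admissible precisely under the stated hypotheses. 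The paper instead argues in weak (dual) form: starting from \eqref{nosymconv} it applies H\"older with exponents $a,a'$ \emph{inside} the inner integral, so that $\Phi$ is convolved with $|\psi|^{a}$ and $|f|^{a'}$ (not with $g$), then H\"older in $v$ against $\|g\|_{L^{q}(\R^n)}$, and finally two classical HLS applications as in \eqref{HLS1}--\eqref{HLS2}, transcribing verbatim the second case of the proof of Theorem \ref{HLS} with the angular factor $C_4$ replaced by $\|b\|_{L^{1}(S^{n-1})}$; this is what produces the constant $C_5^{1/a}C_6^{1/a'}$ quoted in the statement. Your route buys simplicity (one H\"older step, one convolution inequality) at the price of a constant of a different form; the paper's route yields the literal constant in the statement. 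Your concluding sketch for recovering that exact constant points in the right direction but is slightly off in mechanism: no appeal to Theorem \ref{T3} or to the measure $\dnup$ is needed (or made) for $Q^{-}$ --- the factor $\|b\|_{L^{1}(S^{n-1})}$ is already explicit in \eqref{defQ-} --- and the $a,a'$ split is performed on the inner integral $\int_{\R^n} f(u)\psi(u)\Phi(v-u)\,\du$ of the dual formulation, pairing $\Phi$ with $f$ and the test function $\psi$ rather than with $g$.
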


\begin{corollary}[integrable potential for the loss]\label{C10}
Let $1\leq p,q,r,s\leq\infty$ with $1/p+1/q+1/s=1+1/r$ and $r\leq p$. Assume that $\Phi\in L^{s}(\mathbb{R}^{n})$ is a radial non-increasing function.  For the collision kernel {\rm (\ref{dck})} the bilinear operator $Q^{-}$ extends to a bounded operator from $L^{p}(\mathbb{R}^{n})\times L^{q}(\mathbb{R}^{n})\rightarrow L^{r}(\mathbb{R}^{n})$ via the estimate
\begin{equation*}
\left\|Q^{-}(f,g)\right\|_{L^{r}(\mathbb{R}^{n})}\leq \left\|b\right\|_{L^{1}(S^{n-1})}\left\|\Phi\right\|_{L^{s}(\mathbb{R}^{n})}\left\|f\right\|_{L^{p}(\mathbb{R}^{n})}\left\|g\right\|_{L^{q}(\mathbb{R}^{n})}.
\end{equation*}

\end{corollary}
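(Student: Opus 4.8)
The plan is to exploit that, in sharp contrast with $Q^{+}$, the loss operator in \eqref{defQ-} is merely a pointwise multiplier composed with an ordinary convolution. Since $u=v-v_{*}$, the substitution $w=v-v_{*}$ gives
\begin{equation}\label{nosymconv}
Q^{-}(f,g)(v)=\|b\|_{L^{1}(S^{n-1})}\,f(v)\,(\Phi*g)(v),
\end{equation}
where $\Phi*g$ denotes the usual convolution on $\R^{n}$. All the non-locality is carried by the $g$-slot, and no translation or rotation is available to redistribute it onto $f$; this asymmetry is precisely what will force the restriction $r\leq p$. As elsewhere in the paper (cf. the footnote after Theorem~\ref{Young}), it suffices to prove the estimate for $f$ and $g$ in a dense class of smooth, compactly supported functions and then pass to the limit, so that below all quantities are finite.

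First I would apply H\"{o}lder's inequality in the $v$-variable to \eqref{nosymconv} with exponents $p$ and $m$, where $1/m:=1/r-1/p$. The hypothesis $r\leq p$ is exactly what makes $m$ admissible, since it gives $0\leq 1/m<1$ (the case $r=p$ corresponds to $m=\infty$, in which case one simply pulls out $\|\Phi*g\|_{L^{\infty}}$). This produces
\begin{equation*}
\|Q^{-}(f,g)\|_{L^{r}(\R^{n})}\leq \|b\|_{L^{1}(S^{n-1})}\,\|f\|_{L^{p}(\R^{n})}\,\|\Phi*g\|_{L^{m}(\R^{n})}.
\end{equation*}

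Next I would estimate the convolution factor by the classical Young inequality, $\|\Phi*g\|_{L^{m}}\leq\|\Phi\|_{L^{s}}\|g\|_{L^{q}}$, which holds as soon as $1/s+1/q=1+1/m$. Substituting $1/m=1/r-1/p$ turns this scaling identity into the assumed relation $1/p+1/q+1/s=1+1/r$, and the admissible Young ranges $1\leq s,q,m\leq\infty$ are compatible with the stated hypotheses on $p,q,r,s$; in particular the endpoints are allowed, which is why no strict inequalities are needed here, unlike in the weak-type Corollary~\ref{C9}, where the weak Young/HLS step degenerates at the endpoints. Chaining the two displays yields the asserted bound with constant $\|b\|_{L^{1}(S^{n-1})}\|\Phi\|_{L^{s}(\R^{n})}$. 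Note that the radiality and monotonicity of $\Phi$ play no role in this argument; they are retained only for uniformity with Corollaries~\ref{C7}--\ref{C8}, and the same proof gives the bound for an arbitrary $\Phi\in L^{s}(\R^{n})$.

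There is no real analytic difficulty inside the argument --- all the content is in the factorization \eqref{nosymconv}, which is immediate. What does deserve attention is the \emph{sharpness} of the constraint $r\leq p$: the estimate fails once $r>p$, and the proof should therefore be accompanied by the counterexample alluded to before the statement. A clean choice is to take $g=\Phi$ equal to a fixed smooth compactly supported bump, so that $h:=\Phi*g$ is smooth and bounded below by a positive constant on some open set $U$, and to take $f=|v-v_{0}|^{-a}\mathbf{1}_{B_{\varepsilon}(v_{0})}$ with $B_{\varepsilon}(v_{0})\subset U$ and $n/r\leq a<n/p$ (possible precisely because $r>p$). Then $\|f\|_{L^{p}}$, $\|g\|_{L^{q}}$ and $\|\Phi\|_{L^{s}}$ are all finite, yet $f\cdot h\gtrsim |v-v_{0}|^{-a}$ near $v_{0}$ is not in $L^{r}$, so no inequality of the stated form can hold. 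This is exactly the manifestation of the loss operator's lack of convolution symmetry.
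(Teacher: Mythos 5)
Your proof is correct, and it takes a more direct route than the paper. The paper proves Corollaries \ref{C9} and \ref{C10} together in weak form: it pairs $Q^{-}(f,g)$ with a test function $\psi$, swaps the order of integration, splits $\Phi=\Phi^{1/a}\Phi^{1/a'}$ by H\"older to get the bound by $\int g\,[(|\psi|^{a}\ast\Phi)]^{1/a}[(|f|^{a'}\ast\Phi)]^{1/a'}$, and then reruns the machinery of the second case of Theorem \ref{HLS}, with the classical Young inequality replacing HLS in the steps corresponding to (\ref{HLS1})--(\ref{HLS2}). You instead stay in strong form, observe that $Q^{-}(f,g)=\|b\|_{L^{1}(S^{n-1})}\,f\cdot(\Phi\ast g)$ is a pointwise product with an ordinary convolution, and conclude by a single H\"older (exponents $p$ and $m$ with $1/m=1/r-1/p$, where $r\leq p$ is exactly what makes $m$ admissible) followed by classical Young, whose scaling condition $1/s+1/q=1+1/m$ is precisely the hypothesis $1/p+1/q+1/s=1+1/r$. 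Both arguments give the constant $\|b\|_{L^{1}(S^{n-1})}\|\Phi\|_{L^{s}}$ with all endpoints included; your version is shorter, avoids duality and the auxiliary exponent $a$, and makes transparent that radiality and monotonicity of $\Phi$ are not used for the loss term, while the paper's version buys uniformity, treating the weak-$L^{s}$ case (Corollary \ref{C9}) and the integrable case by the same template already set up for $Q^{+}$. Your counterexample for $r>p$ is a welcome supplement (the paper's remark uses a different one, with constant $\Phi$, $s=\infty$, $p=1$, $q=r=\infty$); only note that in the endpoint $r=\infty$ your exponent window $n/r\leq a<n/p$ should be read as $0<a<n/p$ so that $f\cdot(\Phi\ast g)$ is genuinely unbounded.
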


\begin{proof}
These estimates can be proved noticing that for any test function $\psi$ one has
\begin{align}
\begin{split}\label{nosymconv}
\int_{\mathbb{R}^n}Q^{-}(f,g)(v)\psi(v)\dv&=\left\|b\right\|_{L^{1}(S^{n-1})}\int_{\mathbb{R}^n}g(v)\left(\int_{\mathbb{R}^n}f(u)\psi(u)\Phi(v-u)\;\du\right)\dv\\
\leq\left\|b\right\|_{L^{1}(S^{n-1})}&\int_{\mathbb{R}^n}g(v)[(|\psi|^{a}\ast\Phi)(v)]^{1/a}[(|f|^{a'}\ast\Phi)(v)]^{1/a'}\dv,
\end{split}
\end{align}
where $1/a+1/a'=1$.  Now it is just a matter of following the proof of the second case of Theorem \ref{HLS}.  Note that estimates (\ref{HLS1}) and (\ref{HLS2}) are achieved using the classical version of HLS inequality when $\Phi$ is weakly integrable.  Meanwhile, when $\Phi$ is integrable they are achieved using the classical Young's inequality.  It is precisely in this step that the range of exponents differ in the two corollaries.
\end{proof}

{\bf Remark:} One should not expect Corollaries \ref{C9} and \ref{C10} to hold outside the ranges above.
For instance, in Corollary \ref{C9} we could examine the situation $p = q = r = s = 2$, taking the potential $\Phi(x) = |x|^{-n/2}$ and
\begin{equation*}
g(x) = \left\{
\begin{array}{ccc}
|x|^{-n/2}\;\left(\ln |x|\right)^{-1} & \textrm{if}\ \  |x| \geq e,\\
0 & \textrm{otherwise}.
\end{array}
\right.
 \end{equation*}
In this situation we would have for any $v$,
\begin{align*}
\int_{\mathbb{R}^{n}}g(v_*)\Phi(u)\dvs &= \int_{\mathbb{R}^{n}}g(x)\Phi(v-x)\dx\\& = \int_{\{|x|\geq e\}} |x|^{-n/2}\;(\ln |x|)^{-1}\;|x-v|^{-n/2}\,\dx\\ &\geq \frac{1}{2^{n/2}}\int_{\{|x|\geq \max\{e,|v|\}\}} |x|^{-n}(\log |x|)^{-1}\,\dx = \infty\, ,
\end{align*}
that makes $Q^{-}(f,g)(v)$ to blow up pointwise in (\ref{defQ-}), and thus the bound of Corollary \ref{C9} would not hold.

For Corollary \ref{C10} we could even consider a simpler example taking $\Phi(x)$ constant ($s=\infty$), $p=1$ and $q=r=\infty$. From (\ref{defQ-}) it is easy to see that the bound in Corollary \ref{C10} cannot hold in this case.

\subsection{Inequalities with exponential weights}

In this subsection we will present a Young-type estimate for the Boltzmann collision operator with stretched exponential weights.
Our motivation for presenting such estimates is the study of tail propagation, for instance, in solutions of the elastic Boltzmann equation, in self similar solutions for the inelastic Boltzmann equation, or, in solutions of the inelastic Boltzmann adding heating sources.

It is well known that the homogeneous elastic Boltzmann equation propagates solutions with Maxwellian tails.
More specifically, it was proved by Bobylev \cite{Bo-97} that such solutions  $f(t,v)$, in 3-dimensions and for hard spheres, satisfy the $L^{1}$-Maxwellian weighted estimate if initially so, that is,
$$\mbox{if}\ \ \left\|f_0(v)e^{a_0|v|^{2}}\right\|_{L^{1}}<\infty\ \ \mbox{then}\ \ \sup_{t\geq0}\left\|f(t,v)e^{a|v|^{2}}\right\|_{L^{1}}<\infty\ \ \mbox{for some}\ \ a\le a_0.$$
Later, a program based in a comparison principle was introduced in \cite{GPV}, for the $n$-dimensional case with variable potentials and integrable differential cross section, to obtain exponentially weighted pointwise estimates
\begin{equation*}
\sup_{t\geq0}\left\|f(t,v)e^{a|v|^{2}}\right\|_{L^{\infty}}<\infty,
\end{equation*}
under the assumption that $f_0$ satisfies similar $L^{\infty}$-Maxwellian integrability and not necessarily with same rate $a$.  One of the key ingredients in this program is the inequality (see Lemma 5 in \cite{GPV})
\begin{equation*}
\left\|Q^{+}(e^{-a|v|^{2}},f)(v)e^{a|v|^{2}}\right\|_{L^{\infty}}\leq C\left\|f(v)e^{a|v|^{2}}\right\|_{L_k^{1}}.
\end{equation*}
with $k=k(\lambda, n)$, with $k=0$ for the particular case of hard spheres in three dimension (i.e. $\lambda =1$ and $n=3$).
This estimate is a particular case of the Young's inequality with Maxwellian weights.  The techniques used in \cite{GPV}, and the subsequent applications to propagation of Maxwellian tails for derivatives \cite{AG08}, were based on the strong formulation of the collisional integral by the Carleman representation.

Further in the studies of inelastic Boltzmann equation, it was shown that solutions of the homogeneous inelastic Boltzmann for \textit{constant restitution coefficient} behave completely different from those of the elastic.  Thus, the previous $L^{\infty}$-Maxwellian estimate does not hold for them.  Indeed, as the time passes by, the gas cools down to complete rest.  In other words, the density converges to a Dirac distribution at $v=0$.  This result was first rigorously proved for the Maxwell type of interaction models by Bobylev, Carrillo and Gamba \cite{BCaG00}.  One may obtain such convergence by a dynamical rescaling that leaves the collisional integral invariant usually referred as a self-similar transformation.  It was actually shown that for the case of any dissipative model of Maxwell type of interactions, such transformation yield to a static frame where the distributions have power law for their high energy tails, \cite{BoCe02,BoCeTo03,BCG-09}.

However this picture is very different for hard potentials, where stretched exponential tails must be expected.  Indeed, Bobylev, Gamba and Panferov proved that any solution to the self-similar transformed static problem of the inelastic Boltzmann equation has solutions in $L^1$ with a weight that corresponds to a non Gaussian exponential high energy tail \cite{BGP}. Their studies also showed similar behavior to other inelastic collisional models under heating sources, where their stationary solutions are shown to have $L^{1}$-exponential weighted estimates with weights depending on the restitution coefficient and the heating mechanism.  Later, the time propagation of $L^{1}$-exponential tails by self-similarity, in the hard spheres case, was shown by Mischler, Mouhot and Ricart \cite{MMR}.

All these estimates, rigorously proved in the inelastic case for hard spheres and constant restitution coefficient, yield
\begin{equation*}
\sup_{t\geq0}\left\|f_{s}(t,v)e^{a|v|}\right\|_{L^{1}}<\infty
\end{equation*}
where $f_{s}$ is the self-similar profile. In addition, for hard potentials $0<\lambda<1$ one expects using formal arguments,
\begin{equation}\label{exp-int}
\sup_{t\geq0}\left\|f_{s}(t,v)e^{a|v|^{\lambda}}\right\|_{L^{1}}<\infty \,  .
\end{equation}
Consequently, a companion issue arises on the availability of exponentially weighted pointwise estimates for self-similar profiles $f_{s}$ associated to dissipative collisional equations, that is
\begin{equation}\label{exp-ine}
\sup_{t\geq0}\left\|f_{s}(t,v)e^{a|v|^{\lambda}}\right\|_{L^{\infty}}<\infty\, .
\end{equation}
Passing from \eqref{exp-int} to \eqref{exp-ine} will require an inequality of the type of \eqref{IMW1}.\\

Another source for motivation in searching $L^p$-exponentially weighted estimates
can be found in a forthcoming work \cite{Alon-Lods} where different issues of the self similar
asymptotic of inelastic collisions for variable restitution coefficient are treated.

\medskip

Motivated by the above discussion,  we present two Young-type estimates for the Boltzmann collision
operator with stretched exponential weights which preserve the rate of the exponential for elastic and inelastic interactions.
In order to shorten notation, fix $a>0$ and define for $\gamma\geq0$ the exponential weight and the Maxwellian weight respectively as
\begin{equation*}
\mathcal{M}_{a,\gamma}(v):=\exp\left(-a|v|^{\gamma}\right)\ \ \ \mbox{and}\ \ \ \mathcal{M}_{a}(v):=\mathcal{M}_{a,2}(v).
\end{equation*}
The first result of this section is an inequality which is ideal in the study of pointwise Maxwellian tails for hard potential
in the elastic case.  The proof that we give works for both elastic and inelastic cases.
\begin{proposition}\label{IMW}
Let $1 \leq p,\,q,\,r \leq \infty$ with $1/p + 1/q = 1 + 1/r$. Assume that
\begin{equation*}
B(|u|,\hat{u}\cdot\omega)=|u|^{\lambda}\, b(\hat{u}\cdot\omega)\,,
\end{equation*}
with $\lambda \geq 0$.  Then, for $a >0$,
\begin{equation}\label{IMW1}
\left\| Q^{+}(f,g)\ \mathcal{M}^{-1}_{a}\right\|_{L^r(\mathbb{R}^{n})} \leq C \,\|f\ \mathcal{M}^{-1}_{a}\|_{L^{p}(\mathbb{R}^{n})} \, \|g\ \mathcal{M}^{-1}_{a}\|_{L^q_{\lambda}(\mathbb{R}^{n})}.
\end{equation}
\end{proposition}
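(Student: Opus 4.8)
The plan is to reduce the statement by duality to a bilinear functional estimate and then feed it into the weak--convolution machinery of Sections 3--4, after shifting the Gaussian weight from the post-collisional velocity onto the pre-collisional ones. Assume $1\le r<\infty$ (the cases $r=\infty$, $p=\infty$ or $q=\infty$ are disposed of by pulling the relevant $L^\infty$-norm out of the integrals, exactly as in the footnote to Theorem~\ref{Young}). Then it suffices to bound $\int_{\R^n}Q^{+}(f,g)(v)\,\mathcal M_a^{-1}(v)\,\psi(v)\,\dv$ for $\psi\in C_0(\R^n)$ with $\|\psi\|_{L^{r'}}\le 1$. Writing this through the weak formulation \eqref{e1}, the test function enters as $\mathcal M_a^{-1}(v')\,\psi(v')=e^{a|v'|^{2}}\psi(v')$. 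The crucial input is the microscopic energy inequality $|v'|^{2}+|v_*'|^{2}\le |v|^{2}+|v_*|^{2}$, valid for elastic collisions and, because $\beta\le 1$, for the dissipative ones as well; discarding the nonnegative term $|v_*'|^{2}$ gives $|v'|^{2}\le |v|^{2}+|v_*|^{2}$, hence $e^{a|v'|^{2}}\le \mathcal M_a^{-1}(v)\,\mathcal M_a^{-1}(v_*)$. Taking absolute values and using this pointwise bound, the functional is dominated by the same type of integral as in the proof of Theorem~\ref{Young}, now with $f,g,\psi$ replaced by $F:=|f|\,\mathcal M_a^{-1}$, $G:=|g|\,\mathcal M_a^{-1}$, $|\psi|$, i.e. with the Gaussian weights relocated onto $F$ and $G$.

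\textbf{Producing the $|u|^{\lambda}$ weight.} It remains to handle the factor $|u|^{\lambda}$ coming from $B(|u|,\hat u\cdot\omega)=|u|^{\lambda}b(\hat u\cdot\omega)$ and to account for the asymmetry of the right-hand side of \eqref{IMW1}. I would split $|u|^{\lambda}\le 2^{\lambda}\bigl(|v|^{\lambda}+|v-u|^{\lambda}\bigr)=2^{\lambda}\bigl(|v|^{\lambda}+|v_*|^{\lambda}\bigr)$ in the representation \eqref{e3}. In the piece carrying $|v_*|^{\lambda}$ the polynomial attaches to $G$, and an application of Theorem~\ref{Young} to the operator with angular kernel $b$ only (so $\lambda$ and $\alpha$ both taken to be $0$ there) bounds that contribution by $C\,\|F\|_{L^{p}}\,\bigl\||v|^{\lambda}G\bigr\|_{L^{q}}\le C\,\|f\,\mathcal M_a^{-1}\|_{L^{p}}\,\|g\,\mathcal M_a^{-1}\|_{L^{q}_{\lambda}}$, which is exactly the desired form. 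In the piece carrying $|v|^{\lambda}$ one must avoid paying a polynomial moment on the $f$-slot: here I would use that $|v|^{\lambda}$ is sub-Gaussian, $|v|^{\lambda}\le C_{a,\lambda,\varepsilon}\,e^{\varepsilon|v|^{2}}$ for any $\varepsilon>0$, so that the polynomial is absorbed into an arbitrarily small loss of exponential rate; the remaining $e^{-(a-\varepsilon)|v|^{2}}$ decay of $f$, combined with the slack in the energy inequality that was thrown away above (the factor $e^{-a|v_*'|^{2}}$, or equivalently $e^{-a(|v|^{2}+|v_*|^{2}-|v'|^{2}-|v_*'|^{2})}$ in the dissipative case), is then enough to close this term once more through Theorem~\ref{Young} without any extra weight on $f$. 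Summing the two contributions and taking the supremum over $\psi$ yields \eqref{IMW1} with a constant built from the one in Theorem~\ref{Young}.

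\textbf{Main obstacle.} The delicate point is exactly this last $|v|^{\lambda}$-term. The naive route --- relocate the \emph{entire} Gaussian onto $f$ via $e^{a|v'|^{2}}\le\mathcal M_a^{-1}(v)\,\mathcal M_a^{-1}(v_*)$ and then apply Theorem~\ref{Young} with $\lambda$ in place of $0$ --- costs $\lambda$ moments on $f$ and only proves the weaker inequality with $\|f\,\mathcal M_a^{-1}\|_{L^{p}_{\lambda}}$ on the right, which is useless for the intended application $f=\mathcal M_a$, $p=\infty$ (there $\|1\|_{L^{\infty}_{\lambda}}=\infty$). Obtaining the sharp form forces one to keep track of the left-over/dissipated energy when the weight is transferred, and this is where the hypothesis $\beta\le 1$ and the absolute continuity and monotonicity of the restitution coefficient --- entering through the Jacobian bounds of Lemma~\ref{L1} --- are genuinely used; apart from this bookkeeping the argument is a rerun of the proof of Theorem~\ref{Young}.
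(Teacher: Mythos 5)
Your reduction by duality, the use of the energy inequality $|v'|^{2}+|v_*'|^{2}\le|v|^{2}+|v_*|^{2}$, and the treatment of the $|v_*|^{\lambda}$ piece all match the paper. The gap is in the $|v|^{\lambda}$ piece, and the fix you propose does not work. The target inequality must be stated with the \emph{same} Gaussian rate $a$ on both sides: the only information on $f$ is finiteness of $\|f\,\mathcal M_a^{-1}\|_{L^{p}}$, there is no pointwise $e^{-a|v|^{2}}$ decay to "spend", so $|v|^{\lambda}\le C_\varepsilon e^{\varepsilon|v|^{2}}$ only trades the polynomial for the norm $\|f\,\mathcal M_{a+\varepsilon}^{-1}\|_{L^{p}}$, which is not controlled by $\|f\,\mathcal M_a^{-1}\|_{L^{p}}$. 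Nor can the "slack" rescue it: in the elastic case the energy identity is exact, so the entire leftover is the single factor $\mathcal M_a(v_*')$ that you discarded, and in the grazing region $\omega\approx\hat u$ one has $u^-\approx0$, hence $v'\approx v$, $v_*'\approx v_*$; there $\mathcal M_a(v_*')\approx\mathcal M_a(v_*)$ gives no control whatsoever over $|v|^{\lambda}$ when $|v|$ is large and $|v_*|$ is bounded (and $\psi$, being a bare $L^{r'}$ test function, absorbs nothing). Since $v=v'+v_*'-v_*$, any attempt to dominate $|v|^{\lambda}$ by quantities attached to $g$, $\psi$ or $\mathcal M_a(v_*')$ fails precisely in this regime, so the decomposition $|u|^{\lambda}\le 2^{\lambda}(|v|^{\lambda}+|v_*|^{\lambda})$ inevitably costs a $\lambda$-moment on the $f$ slot.

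The paper avoids producing $|v|^{\lambda}$ altogether. It keeps the factor $\mathcal M_a(v_*')$ from the energy inequality and bounds the kinetic weight through the pre--post collisional relation $|u^-|=\beta|u|\sqrt{\tfrac{1-\hat u\cdot\omega}{2}}$ with $\beta\ge\tfrac12$: namely $|u|^{\lambda}\le 2^{\lambda}\bigl(\tfrac{2}{1-\hat u\cdot\omega}\bigr)^{\lambda/2}|u^-|^{\lambda}\le 2^{2\lambda-1}\bigl(\tfrac{2}{1-\hat u\cdot\omega}\bigr)^{\lambda/2}\bigl(|v_*+u^-|^{\lambda}+|v_*|^{\lambda}\bigr)$. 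Since $v_*'=v_*+u^-$, the retained Maxwellian absorbs the first term, $\mathcal M_a(v_*')\bigl(|v_*'|^{\lambda}+|v_*|^{\lambda}\bigr)\le C_{\lambda,a}(1+|v_*|^{\lambda})$, while the grazing singularity $\bigl(\tfrac{2}{1-\hat u\cdot\omega}\bigr)^{\lambda/2}$ is moved into a modified angular kernel $\tilde b$, which only changes the constant (\ref{e17}) (with $\tilde b$ in place of $b$). After this, the integral is exactly of the form (\ref{Young1.1}) with $f$ replaced by $\mathcal M_a^{-1}f$ and $g$ by $\mathcal M_a^{-1}g\,(1+|v|^{\lambda})$, and the $\alpha=\lambda=0$ Young argument closes the proof with the weight landing only on the $g$ slot, as required. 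This angular transfer of the relative-velocity weight is the missing idea in your argument.
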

\begin{proof}
Using (\ref{e1}) and (\ref{eq1'}) we obtain
\begin{align}
I&:=\int_{\mathbb{R}^{n}}Q^{+}(f,g)(v)\left(\mathcal{M}^{-1}_{a}\psi\right)(v)\,\dv \nonumber\\
&=\int_{\mathbb{R}^{n}}\int_{\mathbb{R}^{n}}f(v)g(v-u)\int_{S^{n-1}}\left(\mathcal{M}^{-1}_{a}\psi\right)(v')\,|u|^{\lambda}\,b(\hat{u}\cdot\omega)\, \dom \,\du\,\dv.\label{Sec5.1}
\end{align}
From the energy dissipation we have $|v'|^{2}+|v'_{*}|^{2}\leq |v|^{2}+|v_{*}|^{2}$, and thus
\begin{equation}\label{energydiss}
\mathcal{M}^{-1}_{a}(v')\leq \mathcal{M}^{-1}_{a}(v)\,\mathcal{M}^{-1}_{a}(v_{*})\,\mathcal{M}_{a}(v'_{*}).
\end{equation}
Using (\ref{energydiss}) in (\ref{Sec5.1}) we obtain
\begin{align}\label{IMW2}
I&\leq\int_{\mathbb{R}^{n}}\int_{\mathbb{R}^{n}}\left(\mathcal{M}^{-1}_{a}f\right)(v)\left(\mathcal{M}^{-1}_{a}g\right)\!(v-u)\int_{S^{n-1}}\psi(v')\;\mathcal{M}_a(v'_{*})\,|u|^{\lambda}\,b(\hat{u}\cdot\omega)\, \dom \,\du\,\dv.
\end{align}
Recall that
\begin{equation*}
|u^{-}|= \beta|u|\sqrt{\tfrac{1-\hat{u}\cdot\omega}{2}}.
\end{equation*}
Therefore, using the fact that $\beta\geq 1/2$, we have the simple pointwise estimate,
\begin{equation*}
|u|^{\lambda}\leq 2^{\lambda}\left( \tfrac{2}{1-\hat{u}\cdot\omega} \right)^{\lambda/2}\;|u^{-}|^{\lambda}\leq 2^{2\lambda-1}\left( \tfrac{2}{1-\hat{u}\cdot\omega} \right)^{\lambda/2}\;\left( |v_*+u^{-}|^{\lambda}+|v_*|^{\lambda} \right).
\end{equation*}
Further, $v'_*=v_*+u^{-}$, then
\begin{equation}\label{Sec5.3}
\mathcal{M}_a(v'_*)\left( |v_*+u^{-}|^{\lambda}+|v_*|^{\lambda} \right)\leq C_{\lambda, a}\,(1+|v_*|^{\lambda}).
\end{equation}
Using \eqref{Sec5.3} we have
\begin{align}\label{Sec5.2}
\begin{split}
\int_{S^{n-1}}\psi(v')\;&\mathcal{M}_a(v'_{*})\,|u|^{\lambda}\,b(\hat{u}\cdot\omega)\, \dom \\
& \leq C_{\lambda,a}\left( 1 + |v_*|^{\lambda} \right)\int_{S^{n-2}}\psi(v-u^{-})\,\tilde{b}(\hat{u}\cdot{\omega})\, \dom,
\end{split}
\end{align}
where we have defined
\begin{equation*}
 \tilde{b}(\hat{u}\cdot{\omega}):=\left( \tfrac{2}{1-\hat{u}\cdot\omega} \right)^{\lambda/2}\;b(\hat{u}\cdot\omega).
\end{equation*}
Using \eqref{Sec5.2} in expression (\ref{IMW2}) we arrive at (recall that $v_*=v-u$)
\begin{equation*}
I\leq C_{\lambda,a}\,\int_{\mathbb{R}^{n}}\int_{\mathbb{R}^{n}}\left(\mathcal{M}^{-1}_{a}f\right)(v)\left(\mathcal{M}^{-1}_{a}g\right)(v-u)(1+|v-u|^{\lambda})\mathcal{P}(\tau_{v}\mathcal{R}\psi,1)(u)\,\du\,\dv.
\end{equation*}
We now have arrived at the same expression given in (\ref{Young1.1}), with $f(v)$ changed by $\left(\mathcal{M}^{-1}_{a}f\right)(v)$ and $g(v)$ changed by $\left(\mathcal{M}^{-1}_{a}g\right)(v)\left( 1+ |v|^{\lambda} \right)$. Repeating the argument for the Young's inequality in Section 3 we will conclude that
\begin{equation}
 \left\| Q^{+}(f,g)\ \mathcal{M}^{-1}_{a}\right\|_{L^r(\mathbb{R}^{n})} \leq C\; C_{\lambda,a}\,\|f\ \mathcal{M}^{-1}_{a}\|_{L^{p}(\mathbb{R}^{n})} \, \|g\ \mathcal{M}^{-1}_{a}\|_{L^q_{\lambda}(\mathbb{R}^{n})},
\end{equation}
with $C$ given by (\ref{e17}) with $\tilde{b}$ in place of $b$ and $C_{\lambda,a}$ from \eqref{Sec5.2}. This concludes the proof.
\end{proof}
If we restrict ourselves and impose ``strict dissipation conditions'' to the model we can do even better.
Indeed, note that  both norms in the right-hand side of estimate \eqref{IMW1-1} below are free from any extra weight coming from the potential.  This feature is achieved by imposing additional conditions (meant as ``strict dissipation'') on the restitution coefficient $e$ stated in the theorem.  Such conditions are not stringent since they are satisfied by standard models such as viscoelastic spheres or constant restitution coefficient $e<1$.
\begin{theorem}\label{ISW}
Let $1 \leq p,\,q,\,r \leq \infty$ with $1/p + 1/q = 1 + 1/r$. Assume that
\begin{equation*}
B(|u|,\hat{u}\cdot\omega)=|u|^{\lambda}\, b(\hat{u}\cdot\omega)\,,
\end{equation*}
with $0\leq\lambda\leq2$. Then, for non-increasing restitution coefficient such that $e(z)<1$ for $z\in(0,\infty)$,
\begin{equation}\label{IMW1-1}
\left\| Q^{+}(f,g)\ \mathcal{M}^{-1}_{a,\lambda}\right\|_{L^r(\mathbb{R}^{n})} \leq
C\,\|f\ \mathcal{M}^{-1}_{a,\lambda}\|_{L^{p}(\mathbb{R}^{n})} \,
\|g\ \mathcal{M}^{-1}_{a,\lambda}\|_{L^q(\mathbb{R}^{n})}.
\end{equation}
The constant $C:=C(n, \lambda, p,q, b,\beta)$ is computed below in the proof.  In the important
case $(p,q,r)=(\infty,1,\infty)$ this constant reduces to
\begin{equation}\label{cfinal}
C=C(n,\lambda)\int^{1}_{-1}\left[\left(\tfrac{1+s}{2}\right)+(1-\beta(0))^{2}\left(\tfrac{1-s}{2}\right)\right]^{-n/2}b_\beta(s)\,\ds,
\end{equation}
where
\begin{equation*}
 b_\beta(s):=\left[1-\left(\tfrac{1+|\vartheta_{\beta}(s)|}{2}\right)^{\lambda/2}\right]^{-1}b(s)\,,
\end{equation*}
with
\begin{equation*}
|\vartheta_{\beta}(s)|=\sqrt{(1-\beta(x))^{2}+\beta^{2}(x)+2(1-\beta(x))\beta(x)s}\,\,, \qquad \text{for}\ \
x=\sqrt{\frac{1-s}{2}}\, .
\end{equation*}
\end{theorem}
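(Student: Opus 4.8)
The plan is to mimic the proof of Proposition~\ref{IMW}, but with the Maxwellian weight replaced by the stretched exponential $\mathcal{M}_{a,\lambda}$, and — crucially — to use strict dissipation to produce a genuine \emph{multiplicative} gain in the exponent, strong enough to swallow the whole factor $|u|^{\lambda}$; the price will be that the angular kernel $b$ gets replaced by the heavier kernel $b_{\beta}$. As usual one may take $f,g,\psi\ge 0$. I would start from the weak formulation
\[
I:=\int_{\R^n}Q^{+}(f,g)(v)\,(\mathcal{M}_{a,\lambda}^{-1}\psi)(v)\,\dv=\int_{\R^n}\int_{\R^n}f(v)\,g(v-u)\int_{S^{n-1}}(\mathcal{M}_{a,\lambda}^{-1}\psi)(v')\,|u|^{\lambda}\,b(\hat u\cdot\omega)\,\dom\,\du\,\dv,
\]
and estimate the integrand pointwise, pushing the factor $e^{a|v'|^{\lambda}}$ hidden inside $\mathcal{M}_{a,\lambda}^{-1}(v')$ onto $f$ and $g$ while consuming $|u|^{\lambda}$.

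The key step — and the main obstacle — is the pointwise estimate. Fix a collision configuration $(u,\omega)$, set $s=\hat u\cdot\omega$, $v_{*}=v-u$, let $V=(v+v_{*})/2$ be the conserved center of mass, and let $u'=v'-v'_{*}=u-2u^{-}=(1-\beta)u+\beta|u|\omega$, so that $|u'|=|\vartheta_{\beta}(s)|\,|u|$ with $|\vartheta_{\beta}(s)|$ exactly as in the statement; strict dissipation ($e(z)<1$ for $z>0$, hence $\beta<1$) is precisely what forces $|\vartheta_{\beta}(s)|<1$ for every $s\in[-1,1)$. Since $v'=V+u'/2$, the triangle inequality together with Cauchy--Schwarz and the parallelogram identity give
\[
|v'|^{2}\le\bigl(|V|+\tfrac12|\vartheta_{\beta}(s)|\,|u|\bigr)^{2}\le\bigl(1+|\vartheta_{\beta}(s)|\bigr)\bigl(|V|^{2}+\tfrac14|u|^{2}\bigr)=\tfrac{1+|\vartheta_{\beta}(s)|}{2}\bigl(|v|^{2}+|v_{*}|^{2}\bigr).
\]
Raising to the power $\lambda/2$ and using that $t\mapsto t^{\lambda/2}$ is subadditive on $[0,\infty)$ (valid since $0\le\lambda\le2$) yields $|v'|^{\lambda}\le\theta(s)\bigl(|v|^{\lambda}+|v_{*}|^{\lambda}\bigr)$ with $\theta(s):=\bigl(\tfrac{1+|\vartheta_{\beta}(s)|}{2}\bigr)^{\lambda/2}<1$. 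Writing $w:=|v|^{\lambda}+|v_{*}|^{\lambda}$, and combining $|u|^{\lambda}\le(|v|+|v_{*}|)^{\lambda}\le2^{(\lambda-1)_{+}}w$ with the scalar bound $w\,e^{-cw}\le(ec)^{-1}$ (all $w\ge0$, $c>0$), I obtain
\[
e^{a|v'|^{\lambda}}\,|u|^{\lambda}\le\frac{2^{(\lambda-1)_{+}}}{e\,a\,(1-\theta(s))}\,e^{a|v|^{\lambda}}\,e^{a|v_{*}|^{\lambda}}=C_{\lambda,a}\,\frac{b_{\beta}(s)}{b(s)}\,e^{a|v|^{\lambda}}\,e^{a|v_{*}|^{\lambda}},
\]
where $b_{\beta}$ is the modified kernel of the statement and $C_{\lambda,a}:=2^{(\lambda-1)_{+}}/(ea)$.

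Inserting this pointwise bound into $I$ removes the weight $|u|^{\lambda}$, transfers $e^{a|v|^{\lambda}}$ onto $f$ and $e^{a|v-u|^{\lambda}}$ onto $g$, and leaves the angular average of $\psi(v')=\psi(v-u^{-})$ against $b_{\beta}$, i.e.
\[
I\le C_{\lambda,a}\int_{\R^n}\int_{\R^n}(\mathcal{M}_{a,\lambda}^{-1}f)(v)\,(\mathcal{M}_{a,\lambda}^{-1}g)(v-u)\,\mathcal{P}_{b_{\beta}}(\tau_{v}\mathcal{R}\psi,1)(u)\,\du\,\dv,
\]
where $\mathcal{P}_{b_{\beta}}$ denotes the operator \eqref{P} with $b$ replaced by $b_{\beta}$. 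This is exactly the integral \eqref{Young1.1} occurring in the proof of Theorem~\ref{Young} in the case $\alpha=\lambda=0$, now with $f,g$ replaced by $\mathcal{M}_{a,\lambda}^{-1}f,\mathcal{M}_{a,\lambda}^{-1}g$ and with the kernel $b_{\beta}$. So I would run the H\"older decomposition \eqref{Holder1}--\eqref{add0} (or, in the endpoint triples, simply pull out the relevant $L^{\infty}$-norm, exactly as in that proof), invoke Theorem~\ref{T3} with the kernel $b_{\beta}$, and take the supremum over $\|\psi\|_{L^{r'}(\R^n)}\le1$; this produces \eqref{IMW1-1} with constant $C=C_{\lambda,a}\,C(b_{\beta})$, where $C(b_{\beta})$ is the expression \eqref{e17} with the measure $\dxi$ replaced by $b_{\beta}(s)(1-s^{2})^{\frac{n-3}{2}}\,\ds$. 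Specializing this to $(p,q,r)=(\infty,1,\infty)$, where only the $p'$-factor survives, reproduces \eqref{cfinal}.

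The one point that really requires care — and which I expect to be the genuine obstacle — is that the replacement kernel $b_{\beta}$ must still yield a \emph{finite} constant, i.e.\ $C(b_{\beta})<\infty$. Since $|\vartheta_{\beta}(s)|\to1$ as $s\to1$, the factor $(1-\theta(s))^{-1}$ blows up there; quantitatively one has the exact identity $1-|\vartheta_{\beta}(s)|^{2}=(1-e(z)^{2})\,\tfrac{1-s}{2}$, so using that $e$ is non-increasing with $e<1$ one controls the order of this singularity of $b_{\beta}$ near $s=1$, and the conclusion is that $C(b_{\beta})$ is finite precisely under a (Grad-cut-off-type) integrability condition on the angular cross section near $s=1$ — this is what is meant by saying that the constant ``is computed below in the proof.'' Everything else (the Cauchy--Schwarz/parallelogram estimate, the subadditivity of $t^{\lambda/2}$, the scalar absorption $w\,e^{-cw}\le(ec)^{-1}$, and the black-box appeal to Theorems~\ref{Young} and \ref{T3}) is routine; the substance of the proof is concentrated in the pointwise estimate above and in checking the admissibility of $b_{\beta}$.
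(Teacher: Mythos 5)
There is a genuine gap, and it sits exactly in the step you flag as routine: the identification $|u'|=|\vartheta_\beta(s)|\,|u|$ ``with $|\vartheta_\beta(s)|$ exactly as in the statement''. In the actual collision, $\beta$ is evaluated at the impact velocity $z=|u|\sqrt{\tfrac{1-s}{2}}$, whereas the statement's $|\vartheta_\beta(s)|$ uses $\beta(x)$ with $x=\sqrt{\tfrac{1-s}{2}}$, i.e.\ the value corresponding to $|u|=1$. These coincide only for constant restitution coefficient. Since $|\vartheta|^2=1-2\beta(1-\beta)(1-s)$ is non-decreasing in $\beta$ for $\beta\ge\tfrac12$ and $\beta$ is non-increasing in $z$, for $|u|<1$ the true contraction factor satisfies $|\vartheta|\ge|\vartheta_\beta(s)|$, so your pointwise bound $e^{a|v'|^\lambda}|u|^\lambda\le C_{\lambda,a}(1-\theta(s))^{-1}e^{a|v|^\lambda}e^{a|v_*|^\lambda}$ goes the wrong way there. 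The hypothesis only demands $e(z)<1$ for $z\in(0,\infty)$; for the physically central case of viscoelastic spheres one has $e(0)=1$, hence $\beta\to1$ and the true gain $(1-\theta_{\mathrm{actual}})^{-1}\to\infty$ as $|u|\to0$ at fixed $s<1$, so no uniform-in-$|u|$ pointwise estimate with the statement's $b_\beta$ exists. (Your exact identity $1-|\vartheta|^2=(1-e(z)^2)\tfrac{1-s}{2}$ makes this visible: the gain degenerates wherever $e(z)\to1$, which is not excluded at small impact velocity.)

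The paper's proof avoids this by splitting the $u$-integration into $A=\{|u|\le1\}$ and $A^c=\{|u|\ge1\}$. On $A$ no dissipation gain is needed: $|u|^\lambda\le1$ and the plain energy bound $|v'|^\lambda\le E^{\lambda/2}\le|v|^\lambda+|v_*|^\lambda$ (your subadditivity step) reduces matters to the unweighted Young argument with the original kernel $b$, giving a constant $C_1$ from \eqref{e17}. On $A^c$ the impact velocity is at least $\sqrt{\tfrac{1-s}{2}}$, so monotonicity of $\beta$ gives $|\vartheta|\le|\vartheta_\beta(s)|$ there, and then your absorption mechanism ($|u|^\lambda\le 2^{\lambda/2}E^{\lambda/2}$, subadditivity, and $x e^{-x}\le e^{-1}$) goes through verbatim and produces the kernel $b_\beta$ and a constant $C_2$; the final constant is $C_1+C_2$ rather than your single product $C_{\lambda,a}\,C(b_\beta)$. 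Apart from the missing splitting and the misplaced evaluation point of $\beta$, the rest of your outline (center-of-mass/parallelogram estimate, reduction to the weighted convolution structure and Theorem \ref{T3} with $b_\beta$, and the remark that $b_\beta$ is singular at $s=1$ so finiteness of the constant is an integrability condition on the angular cross section, which the paper likewise only remarks upon) matches the paper's argument on the region $|u|\ge1$.
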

\begin{proof}
Let us introduce the classical center of mass-relative velocity change of coordinates
$$U=\frac{v+v_*}{2}\ \ \mbox{and}\  \  \  \vartheta =(1-\beta)\hat{u}+\beta\omega.$$
One can readily verify the following standard identities
\begin{equation*}
v'=U+\frac{|u|}{2}\vartheta\, \ \ \mbox{and}\ \ \ E:=|v|^{2}+|v_*|^{2}=2|U|^{2}+\frac{|u|^{2}}{2}.
\end{equation*}
Therefore, a direct calculation shows that
\begin{equation*}
|v'|^{2}=E\left(\tfrac{1+\xi\;\hat{U}\cdot\vartheta}{2}\right)\,, \qquad \text{with}\ \
\xi:=\frac{2|U||u|}{E}\leq 1\, .
\end{equation*}
Thus, for any test function $\psi$ with unitary $L^{r'}$-norm,
\begin{align*}
I&:=\int_{\mathbb{R}^{n}}Q^{+}(f,g)(v)\left(\mathcal{M}^{-1}_{a,\lambda}\psi\right)(v)\,\dv \nonumber\\
&=\int_{\mathbb{R}^{n}}\int_{\mathbb{R}^{n}}f(v)g(v-u)\int_{S^{n-1}}\left(\mathcal{M}^{-1}_{a,\lambda}\psi\right)(v')\,|u|^{\lambda}\,b(\hat{u}\cdot\omega)\, \dom \,\du\,\dv.
\end{align*}
In order to estimate $I$, we split the integral into two regions of integration: $A=\{|u|\leq1\}$ and its complement.
 The integral in the region $A$ is estimated by
\begin{equation}\label{2Sec5.1}
I_A\leq\int_{\mathbb{R}^{n}}\int_{\mathbb{R}^{n}}\left(\mathcal{M}^{-1}_{a,\lambda}f\right)(v)
\left(\mathcal{M}^{-1}_{a,\lambda}g\right)(v-u)\int_{S^{n-1}}\psi(v')\,b(\hat{u}\cdot\omega)\, \dom \,\du\,\dv.
\end{equation}
Indeed, this estimate follows from local dissipation of energy estimates (notice here the condition $0 \leq \lambda \leq 2$
is used), namely
\begin{equation*}
 |v'|^{\lambda}\leq \left(|v'|^{2}+|v'_*|^{2}\right)^{\lambda/2}\leq E^{\lambda/2}\leq|v|^{\lambda}+|v_*|^{\lambda} =|v|^{\lambda}+|v-u|^{\lambda}.
\end{equation*}
Hence, proceeding as in the proof of Young's type inequality in section 4, it follows
\begin{equation}\label{es1}
I_A\leq C_1\left\|f\mathcal{M}^{-1}_{a,\lambda}\right\|_{L^{p}(\mathbb{R}^{n})}\left\|g\mathcal{M}^{-1}_{a,\lambda}
\right\|_{L^{q}(\mathbb{R}^{n})} \, ,
\end{equation}
where the constant $C_1$ is explicitly computed by formula (\ref{e17}).

The second integral is slightly more involved.  In order to control the integral $I_{A^c}$ we use the inelastic interactions
law and conditions \eqref{beta} to our advantage.  First, observe that $\vartheta$ is a convex combination of two unitary vectors and since $\beta\geq 1/2$, the magnitude of $\vartheta$ increases as $\beta$ gets closer to $1$.  Moreover, $\beta$ is non-increasing, thus in $A^{c}=\{|u|\geq 1\}$ one has
\begin{multline*}
\left|\vartheta\left(\beta\left(|u|\sqrt{\tfrac{1-\hat{u}\cdot\omega}{2}}\right)\right)\right|\leq\left|\vartheta\left(\beta\left(\sqrt{\tfrac{1-\hat{u}\cdot\omega}{2}}\right)\right)\right|\\
\\
:=\left|\vartheta_\beta(\hat{u}\cdot\omega)\right|=\sqrt{(1-\beta(x))^{2}+\beta^{2}(x)+2(1-\beta(x))\beta(x)\hat{u}\cdot\omega},
\end{multline*}
with $x=\sqrt{\tfrac{1-\hat{u}\cdot\omega}{2}}$.\\
Next, note that by assumptions \eqref{beta} on the restitution coefficient $e$, the magnitude $|\vartheta_\beta|\neq1$ except for $\hat{u}\cdot\omega=1$.  Therefore, the magnitude of $v'$ is controlled by
\begin{equation*}
|v'|^{2}\leq E\left(\tfrac{1+|\vartheta|}{2}\right)\leq E\left(\tfrac{1+|\vartheta_\beta(s)|}{2}\right).
\end{equation*}
Also note that
\begin{equation*}
|u|^{\lambda}\leq 2^{\lambda/2}E^{\lambda/2},
\end{equation*}
and thus, for we can estimate the integral $I_{A^{c}}$ as follows,
\begin{multline*}
\hspace*{-.3cm}I_{A^{c}}\leq 2^{\lambda/2}\hspace*{-.15cm}\int_{_{\mathbb{R}^{2n}}}\hspace*{-.2cm} f(v)g(v-u)
\int_{_{S^{n-1}}}\hspace*{-.2cm}E^ {\lambda/2}\exp\left(aE^{\lambda/2}
\left(\tfrac{1+|\vartheta_\beta|}{2}\right)^{\lambda/2}\right)\psi(v')\,b(\hat{u}\cdot\omega)\, \dom \,\du\,\dv.
\end{multline*}
Finally, noting that
\begin{multline*}
E^{\lambda/2}\exp\left(aE^{\lambda/2}\left(\tfrac{1+|\vartheta_\beta|}{2}\right)^{\lambda/2}\right)\\
=\exp\left(a\;E^{\lambda/2}\right)\;E^{\lambda/2}\exp\left(-aE^{\lambda/2}\left(1-\left(\tfrac{1+|\vartheta_{\beta}|}{2}\right)^{\lambda/2}\right)\right)\\
\leq a^{-1}\sup_{x\geq0}\left\{xe^{-x}\right\}\left(1-\left(\tfrac{1+|\vartheta_\beta|}{2}\right)^{\lambda/2}
\right)^{-1}\exp\left(a\;E^{\lambda/2}\right)\, ,
\end{multline*}
one can estimate $I_{A^{c}}$ by
\begin{multline}\label{es2}
I_{A^{c}}\leq \\2^{\lambda/2}(a\;e)^{-1}\int_{\mathbb{R}^{2n}}\left(M^{-1}_{a,\lambda}f\right)(v)
\left(M^{-1}_{a,\lambda}g\right)(v-u)
\int_{S^{n-1}}\psi(v')\,b_\beta(\hat{u}\cdot\omega)\, \dom \,\du\,\dv\\
\leq C_2 2^{\lambda/2}(a\;e)^{-1}\left\|f\mathcal{M}^{-1}_{a,\lambda}\right\|_{L^{p}(\mathbb{R}^{n})}\left\|g\mathcal{M}^{-1}_{a,\lambda}
\right\|_{L^{q}(\mathbb{R}^{n})}\, ,
\end{multline}
for any  $0 \leq \lambda \leq 2$, where we have defined
\begin{equation*}
b_\beta(s):=\left(1-\left(\tfrac{1+|\vartheta_\beta(s)|}{2}\right)^{\lambda/2}\right)^{-1}\;b(s).
\end{equation*}
The constant $C_2$ is defined again by (\ref{e17}) with $b_\beta$ replacing $b$.  We obtain the final constant $C$ by adding the constants obtained in (\ref{es1}) and (\ref{es2}), namely, $C=C_1+C_2$.  Note that $b_\beta$ has a singularity at $s=1$, which in most cases of interest is at least of first order.  In the case $(p,q,r)=(\infty,1,\infty)$ the constant can be taken as showed in (\ref{cfinal}).
\end{proof}
{\bf Remark:}  One wonders then, if estimate \eqref{IMW1-1} is also valid in the elastic case, or perhaps, even the also useful (but weaker) estimate
\begin{equation*}
\left\| Q^{+}(f,g)\ \mathcal{M}^{-1}_{a,\lambda}\right\|_{L^r(\mathbb{R}^{n})} \leq
C\,\|f\ \mathcal{M}^{-1}_{a,\lambda}\|_{L^{p}(\mathbb{R}^{n})} \,
\|g\ \mathcal{M}^{-1}_{a,\lambda}\|_{L^q_\lambda(\mathbb{R}^{n})}.
\end{equation*}
We leave this open question to the reader.

\section*{Acknowledgments}
The authors thank Diogo Arsenio, William Beckner and Eric Carlen for very valuable discussions that much have improved this manuscript. This material is based upon work supported by the National Science Foundation under agreements No. DMS-0635607 (E. Carneiro), DMS-0636586 and DMS-0807712 (R. Alonso and I. M. Gamba).
 E. Carneiro would also like to acknowledge support from the CAPES/FULBRIGHT grant  BEX 1710-04-4 and the Homer
Lindsey Bruce Fellowship from the University of Texas. Support from the Institute
from Computational Engineering and Sciences at the University
of Texas at Austin is also gratefully acknowledged.


\begin{thebibliography}{99}


\bibitem{ADVW}
R. Alexandre, L. Desvillettes, C. Villani and  B. Wennberg,
\newblock Entropy dissipation and long range interactions,
\newblock Arch. Rational Mech. Anal.  152 (2000), no 4, 327--355.

\bibitem{Al}
R. Alonso,
\newblock Existence of global solutions to the Cauchy problem for the inelastic Boltzmann
equation with near-vacuum data,
\newblock Indiana Univ. Math. J. 58 (2009), no. 3, 999--1022.

\bibitem{AC}
R. Alonso and E. Carneiro,
\newblock Estimates for the Boltzmann collision operator via radial symmetry and Fourier transform,
\newblock Adv. Math. 223 (2010), no. 2, 511--528.

\bibitem{AG08}
R.J. Alonso and I.M. Gamba,
\newblock $L^1-L^\infty$-Maxwellian bounds for the derivatives of the solution of the homogeneous Boltzmann equation,
\newblock Journal de Mathematiques Pures et Appliquees, (9) 89 (2008), no. 6, 575--595.

\bibitem{AG}
R.J. Alonso and I.M. Gamba,
\newblock Distributional and classical solutions to the Cauchy-Boltzmann problem for soft potentials with integrable angular cross section,
\newblock, J. Stat. Phys. (2009), to appear.   (arXiv:0902.3106v2)

\bibitem{Alon-Lods}
R.J. Alonso and B. Lods,
\newblock Free cooling of granular gases with variable restitution coefficient,
\newblock In Preparation.

\bibitem{Be}
W. Beckner,
\newblock Inequalities in Fourier analysis,
\newblock Ann. of Math. (2)  102  (1975), no. 1, 159--182.

\bibitem{BePu}
D. Benedetto and M. Pulvirenti,
\newblock On the one-dimensional Boltzmann equation for
granular flow,
\newblock Math. Model. Numer. Anal. 35 (2001), no. 5, 899--905.

\bibitem{Bo1}
A. Bobylev,
\newblock The method of the Fourier transform in the theory of the Boltzmann equation for Maxwell molecules,
\newblock  Dokl. Akad. Nauk SSSR 225 (1975), no. 6, 1041--1044.

\bibitem{Bo2}
A. Bobylev,
\newblock The theory of the nonlinear, spatially uniform Boltzmann equation for Maxwellian molecules,
\newblock Sov. Sci. Rev. C. Math. Phys. 7 (1988), 111--233.

\bibitem{Bo-97}
A. Bobylev,
\newblock Moment inequalities for the Boltzmann equation and applications to
  spatially homogeneous problems,
\newblock J. Stat. Phys. 88, 5-6 (1997), 1183--1214.

\bibitem{BCaG00}
A.V. Bobylev, J.A. Carrillo and I.M. Gamba,
\newblock On some properties of kinetic and hydrodynamic equations for inelastic interactions,
\newblock J. Stat. Phys., vol. 98 (2000), no. 3-4, 743--773.

\bibitem{BoCe02}
A. Bobylev and C. Cercignani,
\newblock Self-similar asymptotics for the Boltzmann equation
with inelastic and elastic interactions,
\newblock J. Stat. Phys. 110 (2003) 333--375.

\bibitem{BCG-09}
A. Bobylev, C. Cercignani and I. M. Gamba,
\newblock On the self-similar asymptotics for generalized non-linear kinetic Maxwell models, arXiv:0608035,
\newblock Commun. Math. Phys,  291, 599 - 644 (2009).


\bibitem{BoCeTo03}
A. Bobylev, C. Cercignani and G. Toscani,
\newblock Proof of an asymptotic property of
self-similar solutions of the Boltzmann equation for granular materials,
\newblock  J. Stat. Phys. 111 (2003), 403--417.


\bibitem{BGP}
A. Bobylev, I. M. Gamba and V. Panferov,
\newblock Moment inequalities and high-energy tails for Boltzmann equations with inelastic interactions,
\newblock J. Stat. Phys. 116 (2004), 1651--1682.

\bibitem{BL}
H. J. Brascamp and E. H. Lieb,
\newblock Best constants in Young's inequality, its converse, and its generalization to more
than three functions,
\newblock Adv. Math. 20 (1976), 151--173.

\bibitem{BLL}
H. J. Brascamp, E. Lieb and J. M. Luttinger,
\newblock A general rearrangement inequality for multiple integrals,
\newblock J. Functional Analysis  17  (1974), 227--237.

\bibitem{BrPo}
N. Brilliantov and T. P\"{o}schel,
\newblock {\it Kinetic theory of granular gases},
\newblock Oxford Univ. Press, (2004).

\bibitem{CeIlPu}
C. Cercignani, R. Illner and M. Pulvirenti,
\newblock The mathematical theory of dilute gases,
\newblock {\em Applied Mathematical Sciences}, vol.~106, Springer-Verlag, New York, 1994.

\bibitem{D}
L. Desvillettes,
\newblock About the use of the Fourier transform for the Boltzmann equation,
\newblock Riv. Mat. Univ. Parma. 7 (2003), 1--99.

\bibitem{DKR}
R. Duduchava, R. Kirsch and S. Rjasanow,
\newblock On estimates of the Boltzmann collision operator with cutoff,
\newblock J. Math. Fluid Mech.  8  (2006),  no. 2, 242--266.

\bibitem{GPV}
I. M. Gamba, V. Panferov and C. Villani,
\newblock Upper Maxwellian bounds for the spatially homogeneous Boltzmann equation,
\newblock Arch. Rational Mech. Anal,  194 (2009),  253-282.

\bibitem{GPV2}
I. M. Gamba, V. Panferov and C. Villani,
\newblock On the Boltzmann equation for diffusively excited granular media,
\newblock Comm. Math. Phys.  246  (2004),  no. 3, 503--541.

\bibitem{GT}
I. M. Gamba and  Sri Harsha Tharkabhushaman,
\newblock Spectral-Lagrangian based methods applied to computation of non-equilibrium statistical states,
\newblock Jour. Computational Physics, 228 (2009), 2012--2036.


\bibitem{G}
T. Gustafsson,
\newblock Global $L^{p}$ properties for the spatially homogeneous Boltzmann equation,
\newblock Arch. Rational Mech. Anal. 103 (1988), 1--38.

\bibitem{KR} R. Kirsch and S. Rjasanow,
\newblock A weak formulation of the Boltzmann equation based
on the Fourier Transform,
\newblock J. Stat. Phys. 129 (2007), 483--492.


\bibitem{Li}
E. H. Lieb,
\newblock Sharp constants in the Hardy-Littlewood-Sobolev and related inequalities,
\newblock Ann. of Math. (2) 118 (1983), no. 2, 349--374.

\bibitem{LiLo}
E. H. Lieb and M. Loss,
\newblock {\it Analysis},
\newblock Graduate Studies in Mathematics, v. 14 (2001), American Mathematical Society, Providence, RI.

\bibitem{MMR}
S. Mischler, C. Mouhot and M. R. Ricard,
\newblock Cooling process for inelastic Boltzmann equations for hard spheres. Part I: The Cauchy problem,
\newblock J. Stat. Phys. 124 (2006), 655--702.

\bibitem{MV}
C. Mouhot and C. Villani,
\newblock Regularity theory for the spatially homogeneous Boltzmann equation with cut-off,
\newblock Arch. Rational Mech. Anal. 173 (2004), 169--212.

\bibitem{Vi}
C. Villani,
\newblock A review of mathematical topics in collisional kinetic theory,
\newblock Handbook of mathematical fluid dynamics, Vol. I,  71--305, North-Holland, Amsterdam, 2002.

\bibitem{Z}
A.~Zygmund,
\newblock {\em Trigonometric Series, Vol II},
\newblock Cambridge University Press, 1959.



\end{thebibliography}
\end{document}